\newsavebox{\imagebox}
\newtheorem{question}{Question}
\newcommand{\chiso}{\operatorname{\chi_{\rm so}}}
\newcommand{\chio}{\operatorname{\chi_{\rm o}}}
\newcommand{\tw}{\operatorname{tw}}
\newcommand{\rtw}{\operatorname{rtw}}
\newcommand{\N}{\mathbb{N}}
\DeclarePairedDelimiterX\set[1]\lbrace\rbrace{\def\given{\;\delimsize\vert\;}#1}
\newcommand{\leqsub}[1]{\scalebox{0.8}{$\leq$}#1}
\definecolor{IPEgold}{RGB}{255,215, 0}
\title{Strong odd coloring in minor-closed classes}
\author{Miriam Goetze}{Karlsruhe Institute of Technology, Germany}{miriam.goetze@kit.edu}{https://orcid.org/0000-0001-8746-522X}{supported by the Deutsche Forschungsgemeinschaft (DFG, German Research Foundation) -- 520723789.}
\author{Kolja Knauer}{Departament de Matem\`atiques i Inform\`atica, Universitat de Barcelona, Spain}{kolja.knauer@ub.edu}{https://orcid.org/0000-0002-8151-2184}{partially supported by the Severo Ochoa and Mar\'ia de Maeztu Program for Centers and Units of Excellence in R\&D (CEX2020-001084-M) and the  \emph{Ministerio de Econom\'ia, Industria y Competitividad} through grant PID2022-137283NB-C22.}
\author{Fabian Klute}{Universitat Politècnica de Catalunya, Spain}{fabian.klute@upc.edu}{https://orcid.org/0000-0002-7791-3604}{supported by María Zambrano grant 2022UPC-MZC-94041 funded by the Spanish Ministry of Universities and the European Union (NextGenerationEU) and by grant PID2023-150725NB-I00 funded by MICIU/AEI/10.13039/501100011033.}
\author{Irene Parada}{Universitat Politècnica de Catalunya, Spain}{irene.parada@upc.edu}{https://orcid.org/0000-0003-3147-0083}{Serra H\'unter Fellow, partially supported by grant PID2023-150725NB-I00 funded by MICIU/AEI/10.13039/501100011033 and by the UPC grant ALECTORS.}
\author{Juan Pablo Peña}{DIM-CMM, Universidad de Chile, Chile}{juan.pena@dim.uchile.cl}{https://orcid.org/0009-0008-0666-6264}{supported by Basal program FB210005 and Doctoral Fellowship grant 21211955, ANID, Chile.}
\author{Torsten Ueckerdt}{Karlsruhe Institute of Technology, Germany}{torsten.ueckerdt@kit.edu}{https://orcid.org/0000-0002-0645-9715}{supported by the Deutsche Forschungsgemeinschaft (DFG, German Research Foundation) -- 520723789.}
\authorrunning{M. Goetze, K. Knauer, F. Klute, I. Parada, J. P. Peña, T. Ueckerdt}
\keywords{strong odd colorings,
outerplanar graphs,
planar graphs,
treewidth,
product structure,
minor-closed graph classes}
\begin{document}

\maketitle

\begin{abstract}
    We show that the strong odd chromatic number on any proper minor-closed graph class is bounded by a constant. We almost determine the smallest such  constant for outerplanar graphs.
\end{abstract}

\section{Introduction}
\label{sec:introduction}

Recently, Kwon and Park~\cite{kwon2024strong} introduced the notion of \emph{strong odd colorings} of graphs.
Here, a proper vertex-coloring $\varphi\colon V(G) \to [t]$ of a graph $G$ is \emph{strong odd} if for every vertex $v\in V(G)$ and every color $i \in [t]$ the quantity $|N(v) \cap \varphi^{-1}(i)|$ is either zero or odd.
In other words, among the neighbors of any vertex $v \in V(G)$ every color $i \in [t]$ appears an odd number of times or not at all.
The \emph{strong odd chromatic number} $\chiso(G)$ of $G$ is the minimum $t$ such that $G$ admits a strong odd coloring with $t$ colors.
See \cref{fig:examples_strong_odd} for some examples.

\begin{figure}[ht]
        \savebox{\imagebox}{\includegraphics[page=3]{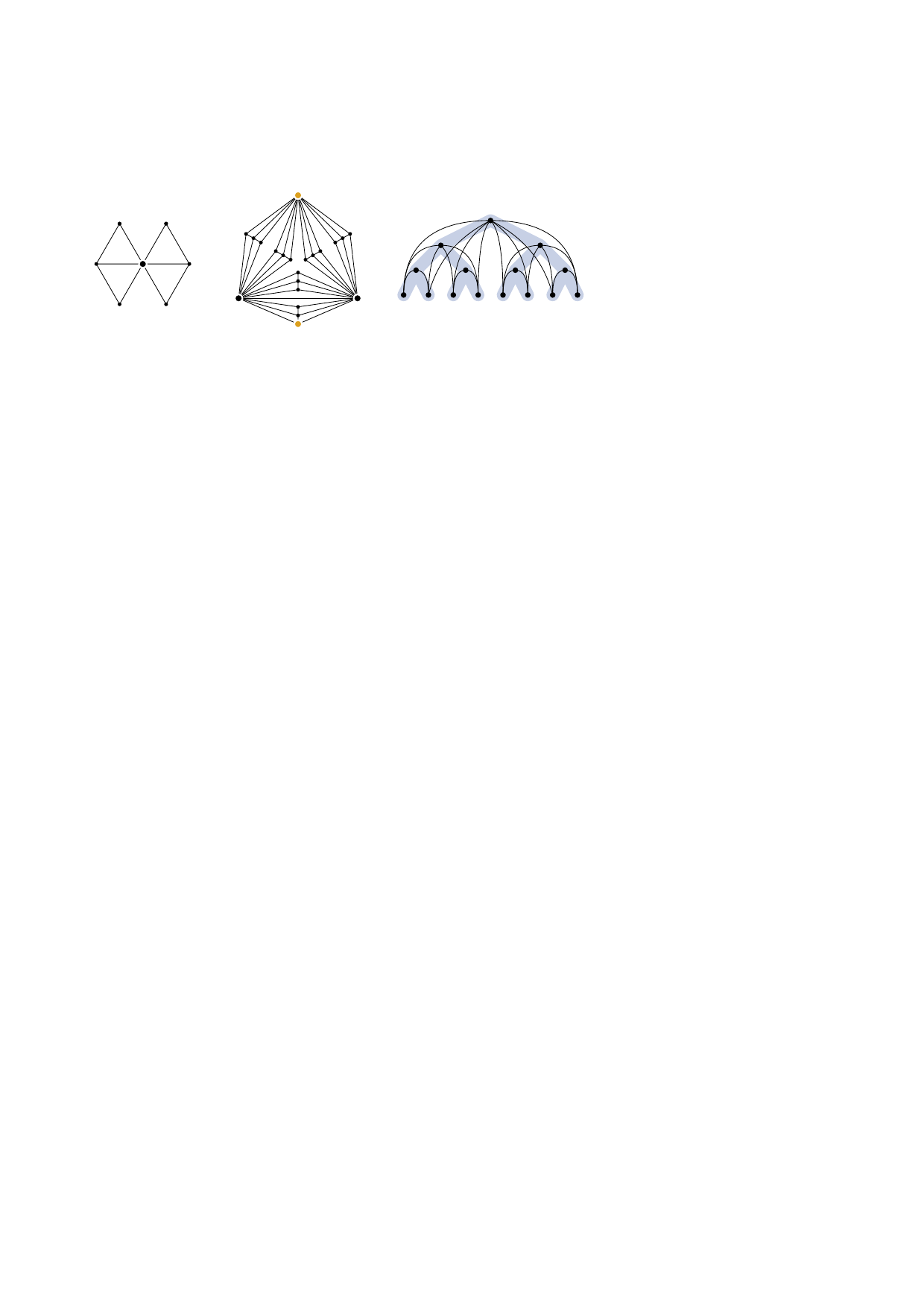}}
        \centering
        \begin{subfigure}[b]{0.25\textwidth}
        \centering\raisebox{\dimexpr.5\ht\imagebox-.5\height}{\includegraphics[page=2]{strong_odd_examples.pdf}}
        \caption{}
        \label{fig:example_outerplanar}
        \end{subfigure}\hfill
        \begin{subfigure}[b]{0.35\textwidth}
        \centering\usebox{\imagebox}
        \caption{}
        \label{fig:example_planar}
        \end{subfigure}\hfill
        \begin{subfigure}[b]{0.4\textwidth}
        \centering\raisebox{\dimexpr.5\ht\imagebox-.5\height}{\includegraphics[page=4]{strong_odd_examples.pdf}}
        \caption{}
        \label{fig:construction_odd_vs_strong_odd}
        \end{subfigure}
    \caption{
        (\subref{fig:example_outerplanar}) An outerplanar graph~$G'$ with $\chiso(G') = 7$. (\subref{fig:example_planar}) A planar graph~$G''$ with $\chiso(G'') = 20$. (\subref{fig:construction_odd_vs_strong_odd}) The graph $G_3$ from \cref{obs:nobinding} with $\chiso(G_3) = 2^3+1$.
        The black vertices in~$G'$ and~$G''$ have pairwise distinct colors in every strong odd coloring.
    }
    \label{fig:examples_strong_odd}
\end{figure}

\subparagraph*{Related Colorings.}

Strong odd colorings are closely related to a number of similar concepts.
A proper vertex-coloring of $G$ is \emph{odd} if in the neighborhood of every non-isolated vertex some color appears an odd number of times.
The corresponding parameter $\chio(G)$ is called the \emph{odd chromatic number}.
A proper vertex-coloring of $G$ is \emph{conflict-free} if in the neighborhood of every non-isolated vertex some color appears exactly once.
The corresponding parameter is denoted by $\chi_{\mathrm{pcf}}(G)$.
Both, $\chio$ and $\chi_{\mathrm{pcf}}$ have received plenty of attention in recent years, see e.g.~\cite{Hic23,Liu24, PS22,jklmpqswyz23,zbMATH07615629,zbMATH07622664,zbMATH07756975,zbMATH07785865,zbMATH07940890,zbMATH07959405,zbMATH07962118,zbMATH07975074}.

If we require that among the neighbors of every vertex, \emph{every} color appears exactly once or not at all, this corresponds to coloring the \emph{square} $G^2$ of $G$.
Here, $G^2$ has the same vertex set as $G$ with two vertices being adjacent in $G^2$ if their distance in $G$ is either $1$ or $2$.
The chromatic number of $G^2$ is studied in particular for planar graphs with respect to Wegner's Conjecture~\cite{Weg77}, see also~\cite{Cra23}.
From the above definitions, we immediately get the following inequalities.

\begin{equation}
\begin{minipage}{10cm}
    \begin{tikzpicture}
	\coordinate (dist_hor) at (2.2,0);
	\coordinate (dist_vert) at (0,0.5);
	\node (a) at (0,0) {$\chi(G)$};
        \node (b) at (2,0) {$\chio(G)$};
	\node (c) at ($(2,0)+(dist_hor)+(dist_vert)$) {$\chiso(G)$};
	\node (d) at ($(2,0)+(dist_hor)-(dist_vert)$) {$\chi_{\mathrm{pcf}}(G)$};
        \node (e) at ($(2,0)+2*(dist_hor)$) {$\chi(G^2)$};
        \node (f) at (8.6,0) {$\Delta(G)^2+1$};

        \path[color=white] (a) edge node[black,pos=0.5] {$\leq$} (b);
	\path[color=white] (b) edge node[black,pos=0.5,sloped] {$\leq$} (c);
	\path[color=white] (b) edge node[black,pos=0.5,sloped] {$\leq$} (d);
	\path[color=white] (c) edge node[black,pos=0.5,sloped] {$\leq$} (e);
	\path[color=white] (d) edge node[black,pos=0.5,sloped] {$\leq$} (e);
        \path[color=white] (e) edge node[black,pos=0.5] {$\leq$} (f);
    \end{tikzpicture}
\end{minipage}
\label{eq:inequalities}
\end{equation}

Here, $\Delta(G)$ denotes the maximum degree of $G$.
The rightmost inequality in \eqref{eq:inequalities} can also be reversed in some sense, since trivially $\chi(G^2) > \Delta(G)$ for all graphs $G$.
However, all remaining inequalities in~\eqref{eq:inequalities} can be arbitrarily far apart.
Petru\v{s}evski and \v{S}krekovski~\cite{PS22} show that there is no function $f$ such that $\chio(G)\leq f(\chi(G))$ for all $G$.
It is also known~\cite{jklmpqswyz23} that there is no function $f$ such that $\chi_{\mathrm{pcf}}(G)\leq f(\chio(G))$ for all $G$.
Further, since $\Delta(G) < \chi(G^2)$, any graph class with unbounded degree but bounded $\chi_{\mathrm{pcf}}$ shows that there is no function $f$ such that $\chi(G^2)\leq f(\chi_{\mathrm{pcf}}(G))$ for all $G$, e.g. bipartite permutation graphs, see~\cite{jklmpqswyz23}.
Similarly, Kwon and Park~\cite{kwon2024strong} note that there is no function $f$ such that $\chi(G^2)\leq f(\chiso(G))$ for all $G$, while they show graphs $G$ with $\chiso(G)\in \Omega(\chio(G)^2)$.
Here, we improve the latter.

\begin{observation}\label{obs:nobinding}
    There is no function $f$ such that $\chiso(G)\leq f(\chio(G))$ for all graphs~$G$.
\end{observation}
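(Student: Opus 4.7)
The plan is to construct an explicit family $\{G_k\}_{k\geq 1}$, with $G_3$ shown in \cref{fig:construction_odd_vs_strong_odd}, satisfying $\chio(G_k) \leq c$ for a universal constant $c$ while $\chiso(G_k) = 2^k + 1$. Since $2^k+1 \to \infty$ as $\chio$ stays bounded, this directly yields the observation.

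Each $G_k$ will consist of a distinguished central vertex $v$ to which $2^k$ ``pendant'' neighbors are attached, together with a small set of auxiliary ``witness'' vertices whose role is to synchronize the strong-odd constraints around $v$. The design property I aim for is that in every strong odd coloring the combined multiplicity constraints at $v$ and at the witnesses force the $2^k$ pendants to receive pairwise distinct colors---these are the black vertices in \cref{fig:construction_odd_vs_strong_odd}. Since $v$ is adjacent to every pendant, $\varphi(v)$ is then a $(2^k+1)$-st color, giving $\chiso(G_k) \geq 2^k + 1$; a matching strong odd coloring with exactly $2^k+1$ colors, in which every pendant gets its own color, yields equality.

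For the upper bound $\chio(G_k) = O(1)$, I would exhibit an explicit proper coloring with a bounded palette in which every vertex has some color occurring an odd number of times in its neighborhood. Since the odd condition only demands \emph{some} odd-occurring color, the central vertex $v$ automatically serves as such a certificate for every pendant and every witness to which it is adjacent, and the conditions at $v$ and at the witnesses themselves can be resolved by simple local parity adjustments using only a constant number of colors, independent of $k$.

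The hard part is the lower bound $\chiso(G_k) \geq 2^k+1$, which amounts to ruling out any strong odd coloring in which two pendants share a color. The witness vertices are introduced precisely to penalize such sharing: if two pendants took the same color, that color would contribute an even multiplicity at $v$ and so would have to be compensated by further pendants of the same color, and the witness structure should be designed so that any such compensation propagates into an even multiplicity at some witness. The core of the argument will therefore be a careful combinatorial case analysis showing that the \emph{only} partition of the $2^k$ pendants consistent with the strong-odd constraints at $v$ and at all witnesses simultaneously is the partition into singletons.
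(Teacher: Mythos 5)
There is a genuine gap: your write-up is a plan, not a proof. The two things that actually carry the observation --- a concrete definition of the ``witness'' structure and the lower-bound argument forcing $2^k+1$ colors --- are both left as intentions (``the witness structure should be designed so that\dots'', ``a careful combinatorial case analysis showing\dots''). Nothing in the proposal lets one verify that such a design exists, and the phrase ``a small set of auxiliary witness vertices'' is at odds with what is needed: in the paper's graph $G_k$ (the one in \cref{fig:construction_odd_vs_strong_odd}) the role of witnesses is played by all $2^k-1$ internal vertices of a full binary tree of height $k$, with every leaf joined to \emph{all} of its ancestors, not just to one central vertex. It is precisely this hierarchy that makes the lower bound provable: one argues by induction on $k$ that any coloring of the leaves that is strong odd at every internal vertex uses at least $2^k$ colors on the leaves --- the graph splits into two copies of $G_{k-1}$ plus the root; by induction each color used on the leaves of one copy appears an odd number of times there (its subtree root sees all of them), so a color shared between the two copies would appear an even nonzero number of times at the root, a contradiction; hence the color sets of the two halves are disjoint and the count doubles. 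Your proposal contains no analogue of this recursion, and a purely ``local parity'' analysis around one center with a constant-size witness set has no visible mechanism to force exponentially many colors.

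A secondary, smaller issue: the bounded odd chromatic number also needs an argument (the paper cites $\chio(G_k)\leq 4$ from the literature for its specific construction); your claim that the central vertex ``automatically'' certifies the odd condition for each pendant is only true once a concrete coloring is fixed in which $\varphi(v)$ appears an odd number of times in each such neighborhood, which again presupposes the missing construction. To repair the proposal, either reproduce the binary-tree construction with leaves joined to all ancestors and run the doubling induction sketched above, or supply an explicit alternative witness structure together with a complete lower-bound proof.
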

\begin{proof}
    For every integer $k\geq 1$ let $G_k$ be the bipartite graph whose vertex set consists of the vertices of the full rooted binary tree $T_k$ of height $k$, i.e., the distance of the root to every leaf is $k$.
    For every leaf $v$ of $T_k$, in $G_k$ we put all edges between $v$ and its ancestors, see \cref{fig:construction_odd_vs_strong_odd} for an example.
    It is known that $\chio(G_k)\leq 4$ for all $k$~\cite{jklmpqswyz23}.
    
    We show by induction on $k$, that in order to color the leaves such that all colors appearing in the neighborhood of any internal vertex appear an odd number of times, at least $2^k$ colors are needed.
    This clearly holds for $k=1$.
    For $k \geq 2$, take such a coloring $\varphi$ of the leaves of $G_k$.
    Note that $G_k$ consists of two disjoint copies $G', G''$ of $G_{k-1}$ plus the root vertex $r$ adjacent to all leaves in $G'$ and $G''$.
    Further, restricting $\varphi$ to the leaves of $G'$ and $G''$ gives feasible colorings of $G'$ and $G''$.
    Hence, each color appearing on the leaves of $G'$ appears an odd number of times on the leaves of $G'$ and similarly for $G''$.
    Hence, no color can be used on the leaves of $G'$ and on the leaves of $G''$.
    Thus, we need twice as many colors as for $G_{k-1}$, i.e., by induction $\varphi$ uses at least $2\cdot 2^{k-1}=2^k$ colors on the leaves.
    
    Finally, as the root of $T_k$ is connected to all leaves, we get that $\chiso(G_k) \geq 2^k+1$.
\end{proof}

\subparagraph*{Planar and Outerplanar Graphs.}

Kwon and Park~\cite{kwon2024strong} ask whether the strong odd chromatic number is bounded on the class $\mathcal{P}$ of all planar graphs.
This is answered affirmatively by Caro, Petru\v{s}evski, \v{S}krekovski, and Tuza~\cite{caro2024strong} who show that for
\[
    c_{\mathcal{P}} = \max\{ \chiso(G) \mid G \text{ planar}\} \qquad \text{we have} \qquad 12 \leq c_{\mathcal{P}} \leq 388.
\]
We increase the lower bound to $20$, which gives a negative answer to \cite[Problem 3.6.1]{caro2024strong}.

\begin{observation}\label{planar}
    For the planar graph~$G$ in \cref{fig:example_planar} we have~$\chiso(G) = 20$.
\end{observation}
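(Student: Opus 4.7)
The equality $\chiso(G)=20$ splits naturally into two bounds. For the upper bound $\chiso(G)\le 20$, I would exhibit an explicit strong odd coloring of $G$ with $20$ colors: assign $20$ pairwise distinct colors to the black vertices and then extend to the remaining vertices of the construction, verifying at each vertex that every color appearing in its neighborhood does so an odd number of times. Since $G$ is a concrete finite graph with a lot of symmetry, this reduces to a routine inspection, best presented in a table or a labeled drawing that reuses a small palette of local patterns.

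For the lower bound $\chiso(G)\ge 20$, I follow the hint given in the caption of \cref{fig:examples_strong_odd} and prove the stronger statement: in every strong odd coloring $\varphi$ of $G$, the $20$ black vertices receive pairwise distinct colors. The basic tool is the following observation. If $u,v$ are two non-adjacent vertices with a common neighbor $w$, and if the remaining neighborhood $N(w)\setminus\{u,v\}$ cannot be colored $\varphi(u)$ (because each of its vertices is either already forced into another color, or is adjacent to a vertex of color $\varphi(u)$), then setting $\varphi(u)=\varphi(v)$ would leave color $\varphi(u)$ appearing exactly twice in $N(w)$, which is even and nonzero, contradicting the strong odd property at $w$. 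For each pair of black vertices I thus locate a suitable common neighbor $w$ (or, in the adjacent case, use propriety directly) and verify that the other neighbors of $w$ are blocked from taking the shared color. Once pairwise distinctness of the $20$ black colors is established, we immediately get $\chiso(G)\ge 20$.

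The main obstacle is the bookkeeping required to treat all $\binom{20}{2}=190$ pairs of black vertices. I would handle this by first giving names to the non-black vertices according to their role in the construction (for instance, hubs, spokes of fans, and auxiliary blockers), grouping the black vertices into a few orbits of the automorphism group of $G$, and then reducing the verification to a short list of representative pairs. A secondary care point is making sure the blocking argument remains valid simultaneously for every pair, since extending a color in one gadget must not free it up in another; this is ensured by the modular design of $G$, where the gadgets used to separate different pairs of black vertices share only black vertices.

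With both bounds in hand, combining them yields $\chiso(G)=20$, which in particular improves the lower bound in \cite{caro2024strong} and negatively answers \cite[Problem 3.6.1]{caro2024strong}.
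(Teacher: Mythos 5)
The paper itself gives no proof of \cref{planar} -- it explicitly leaves the analysis of the graph in \cref{fig:example_planar} to the reader -- so your two-sided plan (an explicit $20$-coloring for the upper bound, pairwise distinctness of the $20$ black vertices for the lower bound) is indeed the intended route, matching the hint in the figure caption. However, as written your proposal has a genuine gap on the lower-bound side, beyond merely deferring the bookkeeping. Your basic tool handles a pair $u,v$ only when you can exhibit a common neighbor $w$ all of whose \emph{other} neighbors are blocked from the shared color, so that the color would appear exactly twice in $N(w)$. If every one of the $\binom{20}{2}$ pairs had its own such gadget meeting the other gadgets only in black vertices (which is exactly the ``modular design'' you assume), then contracting each gadget's common neighbor into one of its two black endpoints would produce a $K_{20}$ minor, contradicting planarity. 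So in the actual graph the separating vertices must be hubs shared by many pairs of black vertices, and then assuming $\varphi(u)=\varphi(v)$ only forces the repeated color to occur an \emph{odd} number of times, at least three, in the hub's neighborhood; ruling that out requires a further argument about the other (possibly black, possibly not yet constrained) neighbors of the hub, which your stated tool does not provide and your proposal does not verify against the structure of the figure.

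In addition, both halves of your argument are left entirely as programs to be executed: no coloring with $20$ colors is exhibited (note the non-black vertices must reuse the black vertices' colors, and the strong odd condition must be checked at every vertex, not only at the black ones), the ``orbits of the automorphism group'' and the ``representative pairs'' are not identified, and no single pair is actually worked through. Since the correctness of the observation rests precisely on this finite verification, the proposal as it stands establishes neither $\chiso(G)\le 20$ nor $\chiso(G)\ge 20$; it is a plausible outline of the verification the paper asks the reader to perform, with the one structural subtlety (shared hubs and the possibility of a color repeating an odd number of times $\ge 3$) left unaddressed.
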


We leave the analysis of the example to the reader.
For the class $\mathcal{O}$ of all outerplanar graphs, the authors~\cite{caro2024strong} show that for
\[
    c_{\mathcal{O}} = \max\{ \chiso(G) \mid G \text{ outerplanar}\} \qquad \text{we have} \qquad 7 \leq c_{\mathcal{O}} \leq 30.
\]
The lower bound of~$7$ is given by the outerplanar graph in \cref{fig:example_outerplanar}, and suspected to be the correct value~\cite[Problem 3.6.2]{caro2024strong}.
We reduce the upper bound to $8$.

\begin{proposition}\label{prop:outerplanar-at-most-8}
    For every outerplanar graph $G$ we have $\chiso(G) \leq 8$.
\end{proposition}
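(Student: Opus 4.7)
The plan is to prove this by induction on $|V(G)|$, with base case $|V(G)| \le 8$ where assigning distinct colors trivially gives a strong odd coloring (each color appears at most once in any neighborhood, which is odd or zero). After reducing to $2$-connected outerplanar $G$ via the block decomposition (trees being easy, and colors permuted between blocks to maintain the strong odd condition at cut vertices), I fix an outerplanar embedding and work with the weak dual tree $T^*$ of $G$.

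If $|V(T^*)| = 1$ then $G$ is a cycle, handled directly. Otherwise, pick a leaf face $F$ of $T^*$: it is bounded by a single chord $uv$ and an outer path $u = w_0, w_1, \ldots, w_k, w_{k+1} = v$ where each $w_i$ with $1 \le i \le k$ has degree exactly $2$ in $G$. Setting $G' = G - \{w_1, \ldots, w_k\}$ yields a smaller outerplanar graph, and by induction $G'$ admits a strong odd coloring $\varphi$ with at most $8$ colors; the remaining task is to extend $\varphi$ to $w_1, \ldots, w_k$. The internal strong odd conditions at each $w_i$ together with properness amount to a proper coloring of the square of the path $w_0 \cdots w_{k+1}$, solvable with just $3$ colors. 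The strong odd conditions at $u$ and $v$ additionally require $c(w_1) \notin \varphi(N_{G'}(u))$ and $c(w_k) \notin \varphi(N_{G'}(v))$: since adding $w_1$ only flips the parity of the count of color $c(w_1)$ in $N(u)$, preserving the strong odd condition at $u$ forces $c(w_1)$ to have been absent from $N_{G'}(u)$, and symmetrically at $v$.

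The main obstacle is these endpoint conditions. A priori $|\varphi(N_{G'}(u))|$ could be as large as $7$, which combined with the forbidden color $c(u)$ would leave no valid choice for $c(w_1)$. I would overcome this either by strengthening the inductive invariant so that at every chord endpoint pair $u, v$ a bound on $|\varphi(N(u)) \cup \varphi(N(v))|$ is maintained; or by choosing the ear $F$ carefully so that its chord endpoints have sparse neighborhood palettes (for instance, among several leaves of $T^*$, picking one where the relevant palettes are smallest); or by performing a local recoloring of $\varphi$ near $\{u, v\}$ to free a color before extending. The case $k=1$ is especially delicate, since both endpoint constraints fall on the single new vertex $w_1$; however, since $u$ and $v$ are already adjacent in $G'$, the palettes $\varphi(N_{G'}(u))$ and $\varphi(N_{G'}(v))$ already share $c(u)$ and $c(v)$, slightly shrinking the forbidden union and giving room to push the bound down to~$8$.
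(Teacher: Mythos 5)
There is a genuine gap, and it sits exactly where you flag it: the endpoint conditions at the chord $uv$. With the plain inductive hypothesis ``$G'$ has \emph{some} strong odd $8$-coloring $\varphi$'', you get no control whatsoever over the palette $\varphi(N_{G'}(u))$. In an outerplanar graph $u$ can have arbitrarily high degree, and an adversarially chosen $\varphi$ can realize all $7$ colors other than $\varphi(u)$ on $N_{G'}(u)$; then $c(w_1)$ must avoid these $7$ colors and $\varphi(u)$, so no color is available and the extension step dies. The three remedies you list (a strengthened invariant bounding the palettes at chord endpoints, a clever choice of leaf face, or a local recoloring) are precisely the missing content, and none of them is carried out or even shown to be plausible: for instance, a single vertex $u$ can be the chord endpoint of \emph{every} leaf face of the weak dual (a fan), so ``pick a leaf whose palettes are small'' has no leverage, and recoloring near $u$ propagates parity constraints into the rest of $G'$ with no termination argument. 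Formulating and propagating the right strengthened invariant is the actual work of the proof. A second, unflagged gap is the reduction to the $2$-connected case: at a cut vertex $v$ the strong odd condition concerns the union of the neighborhoods of $v$ over all blocks, and a color used an odd number of times in $N(v)$ inside two different blocks becomes even in total. Permuting colors blockwise cannot in general make the per-block palettes at $v$ disjoint within $8$ colors (two blocks may each put $5$ colors on $N(v)$), so this reduction also needs the kind of palette control your induction does not provide.

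For comparison, the paper avoids peeling ears and instead embeds $G$ in an edge-maximal outerplanar graph $H$ (a simple $2$-tree), takes a BFS-layering of $H$, and colors layer by layer while maintaining a strengthened invariant: the coloring is proper on all of $H$, any two colored neighbors of a vertex that are not in the next layer get distinct colors, and the coloring is strong odd for $G$. The inductive engine is a small gadget claim about extending a partial coloring of a precolored triangle plus two paths, where parity at the apex vertex is repaired by splitting color classes using reserved colors. That layering-plus-invariant structure is exactly the kind of control over neighborhood palettes that your ear-induction lacks.
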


\subparagraph*{Proper Minor-Closed Classes.}

The class $\mathcal{P}$ of all planar graphs and the class $\mathcal{O}$ of all outerplanar graphs are closed under taking minors.
Our main result is that the strong odd chromatic number is bounded on every proper\footnote{different from the class of all graphs} minor-closed graph class.
Hence, this is a far-reaching generalization of the results above, concerning only the classes $\mathcal{P}$ and $\mathcal{O}$.

\begin{theorem}\label{thm:minor-closed}
    For every proper minor-closed graph class~$\mathcal{G}$, there exists a constant~$c_{\mathcal{G}}$ such that for every graph~$G \in \mathcal{G}$ we have~$\chiso(G) \leq c_{\mathcal{G}}$.
\end{theorem}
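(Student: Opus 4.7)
My starting point is the Product Structure Theorem for proper minor-closed classes due to Distel, Hickingbotham, Huynh, and Wood (generalizing the planar version of Dujmovi\'c, Joret, Micek, Morin, Ueckerdt, and Wood): for every proper minor-closed class $\mathcal{G}$ there are constants $k = k(\mathcal{G})$ and $\ell = \ell(\mathcal{G})$ such that every $G \in \mathcal{G}$ is a subgraph of $H \boxtimes P \boxtimes K_\ell$ for some graph $H$ with $\tw(H) \leq k$ and some path $P$. Theorem~\ref{thm:minor-closed} thus reduces to showing that there is a function $f$ with $\chiso(G) \leq f(k,\ell)$ for every subgraph $G$ of such a product, and this is the form I would aim to establish.

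Given such a $G$, I would identify each $v \in V(G)$ with its image $(h_v, p_v, i_v) \in V(H) \times V(P) \times [\ell]$. The path $P$ induces a layering $L_0, L_1, \ldots$ such that every edge of $G$ stays inside one layer or goes between two consecutive layers; any three consecutive layers of $G$ inherit from $H$ a tree decomposition of width bounded in terms of $k$ and $\ell$. I would then work with a palette split into a \emph{structural} block and a \emph{correction} block. The structural block encodes (i) the $K_\ell$-coordinate $i_v$, (ii) the layer $p_v$ modulo a small constant $m$, chosen so that non-adjacent layers receive distinguishable structural colors, and (iii) a coloring derived from a nice tree decomposition of $H$, intended to yield a proper starting coloring that keeps the relevant local structure of $G$ under control.

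Processing the vertices in an order compatible with the layering and the tree decomposition, I would adjust the structural coloring using the correction palette to enforce the strong odd condition. The main obstacle, and the technical heart of the proof, is bounding the size of the correction palette. A crucial subtlety is that $\chiso$ is not monotone under subgraph inclusion, so the construction cannot simply color the ambient product graph once and restrict to $G$; the coloring must adapt to the actual edges of $G$. I expect the key step to be showing that at each stage of the processing, the number of simultaneously active parity constraints is bounded in terms of the width of the $H$-tree-decomposition, the constant $\ell$, and the constant ``depth'' of nearby layers in $P$, so that a correction palette of size bounded by a function of $k$ and $\ell$ suffices.
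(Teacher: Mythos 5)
Your starting point is not a correct statement of the product structure theorem, and this is a genuine gap. The theorem that every member of a proper minor-closed class is a subgraph of $H \boxtimes P \boxtimes K_\ell$ with $\tw(H) \leq k$ holds for apex-minor-free classes (planar graphs, bounded genus, and more generally classes excluding an apex graph as a minor), but it is \emph{false} for general proper minor-closed classes. The class of apex graphs (planar plus one universal-type vertex) is minor-closed, yet its members do not all embed in such a product: a subgraph of $H \boxtimes P \boxtimes K_\ell$ with $\tw(H)\leq k$ has layered treewidth at most $\ell(k+1)$, whereas an $n\times n$ grid plus a dominating vertex forces every layering to concentrate all vertices in at most three layers, so its layered treewidth grows with $n$. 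The correct general statement — and the one the paper uses, namely Theorem~41 of Dujmovi\'c, Joret, Micek, Morin, Ueckerdt, and Wood — says that every graph in a proper minor-closed class is a subgraph of a bounded-size clique-sum of graphs of the form $(H \boxtimes P) + K_t$, i.e.\ with $t$ apex vertices added and with clique-sums taken iteratively. Handling the apex vertices and, above all, the clique-sum structure is a substantial part of the work (in the paper it occupies an entire additional induction, over the clique size $w$, using natural layerings of $(w,k,t)$-sums), and nothing in your plan addresses it; so even if your product-case argument were completed, it would prove the theorem only for apex-minor-free classes, not for all proper minor-closed classes.

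Separately, the product case itself is only sketched where the real difficulty lies. Your claim that ``the number of simultaneously active parity constraints is bounded'' by the local width data is exactly the point that needs proof and is not obviously true: a vertex $v$ in layer $L_d$ can have arbitrarily many neighbors in layer $L_{d+1}$, spread over arbitrarily many components of that layer, so its parity constraints are not confined to a bounded-size bag and cannot be discharged by a greedy pass with a bounded correction palette without some synchronization mechanism. The paper resolves this by proving a stronger inductive statement (colorings that are simultaneously strong odd on prescribed directed subgraphs and on prescribed vertex subsets), by coloring cliques with odd-sized color classes (\cref{lem:treewidth-clique-coloring}), and by assigning ``types'' to parent cliques so that colorings of different components are forced to agree on which colors appear — that is where the exponential blow-up (which is unavoidable, cf.\ the graphs $G_k$ of \cref{obs:nobinding}) enters. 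Your proposal would need an analogue of these ideas, not just a bound on locally visible constraints, together with the clique-sum/apex extension above, to yield \cref{thm:minor-closed}.
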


Our proof of \cref{thm:minor-closed} goes in three steps, which are briefly summarized as follows.

\begin{description}
    \item[(1) Bounded Treewidth.]\label{step-1}
        First, we look at graphs of bounded treewidth and prove that there is a function $f_1$ such that $\chiso(G) \leq f_1(\tw(G))$ for all $G$.
        Given any graph $G$ with $\tw(G) = k$, we consider any $k$-tree\footnote{edge-maximal graph of treewidth~$k$} $H$ with $G \subseteq H$ and use a BFS-layering $L_0,L_1,\ldots$ of $H$, where $L_d = \{v \in V(H) \mid {\rm dist}_H(v,r) = d\}$ is the set of vertices at distance $d$ to a fixed root vertex $r$.
        Crucially, for each $d \geq 0$ we get induced subgraphs $G[L_d] \subseteq H[L_d]$ of treewidth at most~$k-1$, which allows us to do induction on $k$.
        
        The hardest part is to account for edges between layers, i.e., to ensure that for every vertex $v \in L_d$ and every color $i$ used on $L_{d+1}$ the quantity $|N(v) \cap \varphi^{-1}(i)|$ is zero or odd.
        We do this with a stronger claim involving some additional subsets $M_1,\ldots,M_\ell \subseteq L_{d+1}$ that must intersect every color class in an odd number of elements or not at all.
        Then, reusing colors on every third layer, we construct a proper $f_1(k)$-coloring~$\varphi$ of~$H$ that is strong odd for $G$.
        Our function $f_1(k)$ is exponential in $k$, which is however unavoidable, as for example, $G_k$ from \cref{obs:nobinding} has $\tw(G_k) = k$ and \mbox{$\chiso(G_k) = 2^k+1$}.
        (The color of the root of $T_k$ does not appear on any leaf of $T_k$.)
        
        We remark that outerplanar graphs have treewidth at most~$2$.
        And indeed our proof of \cref{prop:outerplanar-at-most-8} (outerplanar graphs have strong odd $8$-colorings) also works along a BFS-layering of a $2$-tree $H$ with $G \subseteq H$, but let us refer to \cref{sec:outerplanar} for the details.

    \item[(2) Bounded Row-Treewidth.]
        The next step are graphs of bounded row-treewidth, that is graphs $G$ with $G \subseteq H \boxtimes P$ where $H$ has small treewidth and $P$ is a path.
        The \emph{strong product} $G' \boxtimes G''$ of two graphs is the graph with vertex set $V(G') \times V(G'')$ having an edge between two distinct vertices $(u,p)$ and $(v,q)$ if and only if ($uv \in E(G')$ or $u=v$) and ($pq \in E(G'')$ or $p=q$).
        
        The \emph{row-treewidth} $\rtw(G) = \min\{ k \mid G \subseteq H \boxtimes P, \tw(H) = k\}$ has gained a lot of prominence around the so-called Graph Product Stucture~\cite{DHJLW21}.
        Using the layer-structure of $H \boxtimes P$ together with the bound $\chiso(H) \leq f_1(\tw(H))$ above, we prove that $\chiso(G) \leq f_2(\rtw(G))$ for a universal (again exponential) function $f_2$ and all $G$.

    \item[(3) Clique Sums.]
        Third, we extend our results to subgraphs of so-called $(w,k,t)$-sums.
        A \emph{$\leqsub{w}$-clique-sum} is obtained from two graphs $G',G''$ by identifying a clique $C'$ of size at most $w$ in $G'$ with a clique $C''$ of the same size in $G''$.
        For integers $w,k,t \geq 1$, a \emph{$(w,k,t)$-sum} is a graph $G$ that can be obtained by taking $\leqsub{w}$-clique-sums of graphs $G_1,\ldots,G_n$ where each $G_i$ is obtained from $H_i \boxtimes P$ for some $k$-tree $H_i$ after adding $t$ universal vertices.
        A result of Dujmović, Joret, Micek, Morin, Ueckerdt, and Wood~\cite[Theorem 41]{DJMMUW20} states that for every proper minor-closed graph class $\mathcal{G}$ there are constants $k$ and $t$, such that every $G \in \mathcal{G}$ is a subgraph of a $(2(k+1)+t,k,t)$-sum.

        The iterative clique-sums give rise to yet another layering structure (similar to BFS-layerings).
        Using this, we give a universal function $f_3$ such that $\chiso(G) \leq f_3(w,k,t)$ whenever $G$ is a subgraph of a $(w,k,t)$-sum, which concludes the proof of \cref{thm:minor-closed}.
\end{description}

\subparagraph*{Organization of the paper.}

In \cref{sec:preliminaries} we explain the concepts used throughout the paper.
This includes a generalization of strong odd coloring to directed graphs, but also a discussion of the required properties of graphs of small treewidth, or small row-treewidth, and how these relate to proper minor-closed classes of graphs.
In particular, we define BFS-layerings and natural layerings in \cref{sec:preliminaries}.

\cref{sec:outerplanar} is devoted to the proof of \cref{prop:outerplanar-at-most-8}, while \cref{sec:treewidth,sec:proper_minor_closed,sec:row-treewidth} individually address the three steps to prove \cref{thm:minor-closed} as described above.
That is, we show that the strong odd chromatic number is bounded for graphs of small treewidth in \cref{sec:treewidth}, for graphs of small row-treewidth in \cref{sec:row-treewidth}, and finally for graphs from a proper minor-closed class in \cref{sec:proper_minor_closed}.

After briefly mentioning some application of strong odd colorings to facially odd colorings in \cref{sec:app}, we conclude with a discussion of possible extensions and open questions in \cref{sec:further}.

\section{Preliminaries}
\label{sec:preliminaries}

For an integer~$k \in \N$, we write~$[k]$ for the set~$\set{1, \dots,k}$ of the first~$k$ integers.
For a graph~$G$ and a set~$S \subseteq V(G)$, we denote by~$G[S]$ the subgraph of~$G$ induced by the vertices of~$S$. For convenience we restate:

\begin{definition}\label{def:strong-odd-coloring}
    For a graph $G$, a $t$-coloring $\psi \colon V \to [t]$ of $V$ is a \emph{strong odd coloring} if 
    \begin{enumerate}
     \item $\psi$ is \emph{proper}, i.e., for any $uv \in E(G)$ we have $\psi(u) \neq \psi(v)$, and
 
     \item $\psi$ is \emph{strong odd}, i.e., for every $v \in V(G)$ and every color $c \in [t]$ the cardinality of\\
     $N(v) \cap \psi^{-1}(c) = \{u \in V(G) \mid uv \in E(G), \psi(u) = c\}$ is either zero or odd.
    \end{enumerate}
\end{definition} 
For a graph $G$, its \emph{strong odd chromatic number} $\chiso(G)$ is the minimum $t$ such that $G$ admits a strong odd coloring with $t$ colors.

\subparagraph*{Strong Odd Colorings of Directed Graphs.}

We say that a vertex-coloring~$\psi$ of a graph~$G$ is \emph{strong odd} on a set~$M \subseteq V(G)$ if for every color~$c$ the cardinality of~$\psi^{-1}(c) \cap M$ is either odd or zero.

We consider directed graphs which may contain edges in both directions, but no multi-edges or loops.
A directed graph~$\vec{G}$ consists of a vertex-set $V(\vec{G})$ and a set $E(\vec{G})$ of directed edges.
We denote an edge in $E(\vec{G})$ directed from vertex~$u$ to vertex~$v$ by~$\vec{uv}$.
In particular, $\vec{uv} \neq \vec{vu}$ here, while we allow that none, one, or even both of $\vec{uv},\vec{vu}$ are present in the edge set~$E(\vec{G})$ of a directed graph~$\vec{G}$.
A directed graph~$\vec{H}$ is a subgraph of a directed graph~$\vec{G}$ if $V(\vec{H}) \subseteq V(\vec{G})$ and $E(\vec{H}) \subseteq E(\vec{G})$.
For a directed graph $\vec{G}$ and a vertex $v \in V(\vec{G})$, we denote by 
\[
    N^+(\vec{G},v) = \{u \in V(\vec{G}) \mid \vec{vu} \in E(\vec{G})\}
\] 
the set of all \emph{out-neighbors} of $v$.

Now a $t$-coloring $\psi \colon V(\vec{G}) \to [t]$ of the vertices of $\vec{G}$ is a \emph{strong odd coloring} if
\begin{enumerate}
    \item $\psi$ is \emph{proper}, i.e., for any $\vec{uv} \in E(\vec{G})$ we have $\psi(u) \neq \psi(v)$, and
    \item $\psi$ is strong odd on the out-neighborhood~$N^+(\vec{G},v)$ of every vertex~$v \in V(\vec{G})$.
\end{enumerate}
That is, for each vertex $v$ each color $c$ appears among the out-neighbors of $v$ either not at all, or an odd number of times.
In particular, if for every $\vec{uv} \in E(\vec{G})$ we also have $\vec{vu} \in E(\vec{G})$, then we can think of $\vec{G}$ as being an undirected graph and the definition of a strong odd $t$-coloring of $\vec{G}$ coincides with \cref{def:strong-odd-coloring}.

\subparagraph*{\boldmath $k$-Trees and BFS-Layerings.}

For an integer $k \geq 0$, a \emph{$k$-tree} is a graph that can be recursively defined as follows: 
a $k$-clique\footnote{While it is more common to start with a $(k+1)$-clique, it is more convenient for us (and equivalent for our purposes) to start with a $k$-clique.} is a $k$-tree and a graph obtained from a $k$-tree by adding a vertex and connecting it to all the vertices of an existing $k$-clique is a $k$-tree. 
A subgraph of a $k$-tree is a \emph{partial $k$-tree}. 
It is well known that partial $k$-trees are exactly the graphs of treewidth at most $k$.

Let~$Q$ be the initial $k$-clique in the construction sequence of a $k$-tree~$G$. 
Let~$G'$ be the graph obtained from~$G$ by adding a new vertex~$r$ that is connected to every vertex of~$Q$.
Consider the partition of the vertices of~$G$ into layers~$L_1, \dots, L_t$ where the $i$-th layer contains all vertices with distance~$i$ to the vertex~$r$.
We call such a partition a \emph{BFS-layering}, see \cref{fig:example-BFS-layering} for an example.
We remark that Dujmovi\'{c}, Morin and Wood~\cite[Theorem 6.1]{DujmovicMW05} considered a similar concept for a generalization of~$k$-trees.

\begin{figure}
    \centering
    \includegraphics[page=1]{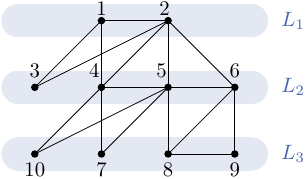}
    \caption{A BFS-layering~$L_1,L_2,L_3$ of a $2$-tree~$G$. The labels of the vertices correspond to their insertion order in the construction sequence of~$G$.}
    \label{fig:example-BFS-layering}
\end{figure}

\begin{observation}
    \label{obs:bfs-layering}
    Let $k \geq 1$ be an integer. 
    If $L_1, \dots, L_d$ is a BFS-layering of a $k$-tree~$G$ then the following holds:
    \begin{enumerate}[(B1)]
        \item\label{property:root_k-clique}
            The first layer~$L_1$ is a $k$-clique of~$G$.
        \item \label{property:parent_k-clique} 
            If~$L_i$ is a layer with $i\geq 2$ and~$C_i$ a component of the induced subgraph~$G[L_i]$, then the set of vertices in~$L_{i-1}$ with a  neighbor in~$C_i$ forms a $k$-clique of~$G$.
        \item \label{property:layer_tw_k-1} The induced subgraph~$G[L_i]$ of each layer~$L_i$ is a partial $(k-1)$-tree.
        \item\label{property:is_layering} For every~$i \in [\ell]$, vertices in~$L_i$ only have neighbors in~$L_{i-1},L_i$ and~$L_{i+1}$.
    \end{enumerate}
\end{observation}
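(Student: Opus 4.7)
The plan is to prove properties (B1)--(B4) by induction on the construction sequence of the $k$-tree $G$; (B1) and (B4) follow almost immediately from the definitions, while (B2) and (B3) require a careful look at how each newly added vertex interacts with the current layering.

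For (B1), since $r$ is adjacent in $G'$ exactly to the vertices of the initial $k$-clique $Q$, every vertex of $Q$ has distance $1$ to $r$ and no other vertex does, so $L_1 = Q$ is a $k$-clique of $G$. For (B4), I would invoke the standard BFS property: if $uv \in E(G)$, then extending a shortest $r$--$u$ path by the edge $uv$ (and vice versa) yields $|\dist_{G'}(r,u) - \dist_{G'}(r,v)| \leq 1$, so the endpoints of every edge lie in the same or in consecutive layers.

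The main work is in (B2). Consider the inductive step in which a new vertex $v$ is added, attached to a $k$-clique $K$ in the current graph; set $d = 1 + \min_{u \in K} \dist_{G'}(r,u)$, so $v \in L_d$. Since $K$ is a clique of $G$ and some vertex of $K$ realizes distance $d-1$, every vertex of $K$ has distance in $\{d-1, d\}$ from $r$, giving $K \subseteq L_{d-1} \cup L_d$ and in particular $|K \cap L_d| \leq k-1$. The key observation for (B2) is that if $|K \cap L_d| \geq 2$, then these vertices are pairwise adjacent via edges of $K$ and hence lie in a single component $C_1$ of the pre-existing $G[L_d]$. Therefore $v$ either forms a new one-vertex component $\{v\}$ in $G[L_d]$ (when $K \cap L_d = \emptyset$, in which case $K \subseteq L_{d-1}$ is immediately a $k$-clique of neighbors) or enlarges a unique component $C_1$ to $C_1 \cup \{v\}$. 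In the latter case, any $w \in K \cap L_{d-1}$ is adjacent to every $u \in K \cap L_d \subseteq C_1$ (via the clique $K$), so $w$ already lies in the neighbor set $N_1$ of $C_1$ in $L_{d-1}$, which by induction is a $k$-clique; hence the neighbor set of $C_1 \cup \{v\}$ in $L_{d-1}$ equals $N_1$ and remains a $k$-clique.

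For (B3), I would use an elimination-ordering argument. Ordering the vertices of $L_i$ by their positions in the construction sequence of $G$, each $v \in L_i$ has its back-neighbors in $L_i$ contained in $K_v \cap L_i$, where $K_v$ is the $k$-clique to which $v$ was attached. By the distance argument in the previous paragraph this set has size at most $k-1$ and is a clique in $G$ (and hence in $G[L_i]$). This yields a tree decomposition of $G[L_i]$ of width at most $k-1$, so $G[L_i]$ is a partial $(k-1)$-tree. The main obstacle throughout is the component-merging analysis in (B2): once it is established that each newly added vertex merges at most one pre-existing component and that its $L_{d-1}$-neighbors are already accounted for by the inductive $k$-clique, the rest of the argument proceeds cleanly.
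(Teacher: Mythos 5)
Your proof is correct; the paper states \cref{obs:bfs-layering} without proof (only pointing to a related result of Dujmovi\'c, Morin and Wood), and your induction along the construction sequence—(B1) and (B4) immediate, the single-component analysis for (B2), and the elimination ordering for (B3)—is exactly the natural argument that fills this in. The one step you leave implicit is the assertion ``so $v \in L_d$'': it needs the routine remark that each newly added vertex is simplicial, so later additions create no shortcuts to $r$ and hence $\dist_{G'}(r,v) = 1 + \min_{u \in K}\dist_{G'}(r,u)$ holds in the final graph; adding that one line makes the argument complete.
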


\subparagraph*{Row-Treewidth.}

The \emph{strong product} of two graphs~$H$ and~$H'$, denoted by~$H \boxtimes H'$, is the graph on the vertex-set~$V(H) \times V(H')$ where two vertices~$(u,x)$ and~$(v,y)$ are adjacent if and only if one of the following holds
\begin{itemize}
    \item $u=v$ and $xy \in E(H')$
    \item $x=y$ and $uv \in E(H)$
    \item $uv \in E(H)$ and $xy \in E(H')$.
\end{itemize}
Suppose the graph~$H'$ corresponds to a path~$P$ on~$n$ vertices and let~$x_1, \dots, x_n$ be the order of the vertices along~$P$.
We then obtain a partition of the vertex-set of~$H \boxtimes P$ into $n$~\emph{layers} $L_1, \dots, L_n$ where the $i$-th layer~$L_i = \set{(u,x_i) \given u \in V(H)}$ corresponds to the copies of~$x_i$ in the $H \boxtimes P$.
Note that a vertex of the $i$-th layer~$L_i$ is only adjacent to vertices of up to three layers, namely~$L_{i-1}, L_i$ and~$L_{i+1}$.
Further, every layer induces a copy of~$H$ in the graph~$H \boxtimes P$.

We say that a graph~$G$ has \emph{row-treewidth} at most~$k$, denoted by~$\rtw(G) \leq k$, if there exists a $k$-tree~$H$ and a path~$P$ such that~$G$ is a subgraph of the strong product of~$H$ and~$P$, i.e., $G \subseteq H \boxtimes P$.

\subparagraph*{Proper Minor-Closed Graph Classes and Natural Layerings.}

For graphs~$G$ and~$H$, we denote by~$G+H$ their \emph{join}, i.e., the graph that is obtained from the disjoint union of~$G$ and~$H$ by connecting all pairs $(u,v) \in V(G) \times V(H)$ of vertices with an edge, i.e. $V(G+H)=V(G) \cup V(G)$ and~$E(G+H) = E(G) \cup E(H) \cup \set{uv \given u \in V(G), v \in V(H)}$.

We say that a graph~$F$ is a \emph{$(k,t)$-summand} if there exists a $k$-tree~$H$ and a path~$P$ such that~$F = (H \boxtimes P) + K_t$.
A $(w,k,t)$-sum is a graph recursively defined as follows: 
a $(k,t)$-summand is a $(w,k,t)$-sum. 
If~$S$ is a $(w,k,t)$-sum with a clique~$Q_S$ of size at most~$w$ and~$F$ a $(k,t)$-summand with a clique~$Q_F$ of size~$\abs{Q_S}$, then the graph obtained by identifying the vertices of~$Q_S$ and~$Q_F$ is a $(w,k,t)$-sum.
Note in particular that $(w,k,t)$-sums are not necessarily connected, as the common clique may have size~$0$.
The largest clique in a $(k,t)$-summand~$F$ has size at most~$2(k+1)+t$, as a clique in~$F$ contains vertices of at most two different layers of~$H \boxtimes P$ and all vertices of~$K_t$. 
Thus, the largest clique in a $(w,k,t)$-sum~$S$ also has size at most~$2(k+1)+t$ as each clique of~$S$ is completely contained in one of its summands.

\begin{theorem}[{\cite[Theorem 41]{DJMMUW20}}]
    \label{thm:minor-closed-sums}
    For every proper minor-closed graph class~$\mathcal{G}$, there are integers~$k$ and~$t$ such that every graph~$G \in \mathcal{G}$ is a subgraph of a $(2(k+1)+t,k,t)$-sum.
\end{theorem}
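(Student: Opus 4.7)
The plan is to derive this theorem by combining the Graph Minor Structure Theorem of Robertson and Seymour with a Product Structure Theorem for graphs of bounded Euler genus, in three steps.

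First, I would invoke the Graph Minor Structure Theorem: for every proper minor-closed class $\mathcal{G}$, there exist constants $g, a, w' \in \N$ such that every $G \in \mathcal{G}$ can be obtained by $\leqsub{w'}$-clique-sums of $a$-nearly-embeddable graphs in a surface of Euler genus at most $g$. Recall that such a graph is constructed from a graph embedded in a surface of genus at most $g$ by attaching up to $a$ vortices of depth at most $a$ along the boundaries of at most $a$ pairwise-disjoint faces, and then adding up to $a$ apex vertices of arbitrary adjacency.

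Second, I would show that every $a$-nearly-embeddable summand $F_0$ is a subgraph of some $(k,t)$-summand $(H \boxtimes P) + K_t$, with $k = k(g,a)$ and $t \leq a$. The core ingredient is the Product Structure Theorem for bounded-genus graphs: every graph embeddable in a surface of Euler genus at most $g$ is a subgraph of $H \boxtimes P$ for a graph $H$ with $\tw(H) \leq k_0(g)$ and a path $P$; this in turn reduces to the planar case by cutting the surface along a bounded system of non-separating simple curves and absorbing the bounded number of cut vertices into $H$. Vortices of bounded depth are then folded into $H$: each vortex comes with a path-decomposition of bounded width along the boundary cycle of a face, and after possibly rerouting the face boundary to align with the BFS-layering that produces $P$, the vortex bags merge into the corresponding bags of $H$ at a bounded cost in treewidth. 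Finally, the at most $a$ apex vertices are accommodated by taking the join with $K_t$ for $t \leq a$.

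Third, I would lift the clique-sum decomposition of $G$ from the Graph Minor Structure Theorem to the $(w,k,t)$-sum framework. Each $\leqsub{w'}$-clique-sum is performed along a clique lying entirely within one of the almost-embeddable summands, and hence within its enclosing $(k,t)$-summand. Since every clique in a $(k,t)$-summand has size at most $2(k+1)+t$, after possibly enlarging $k$ so that $2(k+1)+t \geq w'$, each such clique-sum becomes a valid $\leqsub{2(k+1)+t}$-clique-sum of $(k,t)$-summands, and therefore $G$ is a subgraph of a $(2(k+1)+t,k,t)$-sum, as required.

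The main obstacle is the vortex step in the second paragraph: aligning the path-decomposition of a vortex with the path $P$ produced by the product structure of the underlying embedded graph. Since $P$ corresponds to a BFS-layering of the planar piece obtained after cutting, one must carefully choose the system of non-separating curves so that the boundary cycle of each vortex face crosses only a bounded number of BFS-layers; this ensures that each vortex bag intersects only a bounded number of layers of $P$ and can be absorbed into $H$ without blowing up its treewidth beyond a constant depending only on $g$ and $a$.
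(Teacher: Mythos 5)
The paper itself does not prove \cref{thm:minor-closed-sums}; it is quoted from \cite[Theorem~41]{DJMMUW20}, so your proposal has to be measured against the proof in that source. Your overall route is the same as theirs: invoke the Robertson--Seymour graph minor structure theorem, prove a product structure $(H\boxtimes P)+K_t$ for the almost-embeddable torsos (embedded part via the bounded-genus product structure theorem, apices as the $+K_t$, vortices folded into $H$), and then lift the $\leqsub{w'}$-clique-sums, enlarging $k$ so that $2(k+1)+t\geq w'$. Your third step is sound: adhesion sets are cliques of the torsos, hence cliques of the enclosing $(k,t)$-summands, and every clique of a $(k,t)$-summand has size at most $2(k+1)+t$, so the sums are automatically $\leqsub{2(k+1)+t}$-clique-sums.

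The genuine gap is the vortex step, which you correctly identify as the crux but then resolve with a mechanism that does not work. You propose to choose the system of non-separating cutting curves (or to ``reroute the face boundary'') so that the boundary cycle of each vortex face meets only a bounded number of BFS layers. This cannot be guaranteed: the vortex faces and their boundary walks are prescribed by the near-embedding coming from the structure theorem, and already in the planar case a single facial cycle can contain vertices at essentially all BFS distances from the root; cutting along non-separating curves changes neither the vortex faces nor the BFS layering in any controlled way, and re-rooting can fix at most one face while there may be up to $a$ vortex faces. The proof in \cite{DJMMUW20} does not attempt to bound the number of layers met by a vortex boundary. Instead it keeps the BFS layering of the embedded part and absorbs the vortex combinatorially: each bag of the (suitably normalized) width-bounded vortex path-decomposition is merged into the part, respectively the bag of the tree-decomposition of the quotient, associated with its attachment vertex on the boundary cycle, and one then verifies that the quotient treewidth and the layered width grow only by a function of $g$, the number of vortices, and the vortex width, after which the bounded layered width is traded for a bounded increase in $\tw(H)$. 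Without an argument of this kind, your second step --- that every almost-embeddable torso is a subgraph of $(H\boxtimes P)+K_t$ with $\tw(H)$ bounded --- remains unproven, and that step is precisely the technical heart of the theorem.
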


We will show in \cref{sec:proper_minor_closed} that the strong odd chromatic number~$\chiso$ of subgraphs of $(w,k,t)$-sums is bounded in~$k$ and~$t$ (cf. \cref{prop:sum-fine}).
Together with \cref{thm:minor-closed-sums} this implies that the strong odd chromatic number is bounded for every proper minor-closed graph class, see~\cref{thm:minor-closed}.
The proof of the result for $(w,k,t)$-sums is centered around natural layerings, a structure similar to BFS-layerings in the case of $k$-trees.

We say that a partition~$L_1, \dots, L_{\ell}$ of the vertices of a $(w,k,t)$-sum~$S$ with~$w \geq 1$ is a \emph{natural layering of~$S$} if the following holds:
\begin{enumerate}[(N1)]
    \item\label{property_layering_sum:root-layer} The first layer~$L_1$ is a $(0,k,t)$-sum.
    \item\label{property_layering_sum:parent-clique} If~$L_i$ is a layer with~$i \geq 2$ and~$C_i$ a component of the induced subgraph~$S[L_i]$, then the set of vertices in~$L_{i-1}$ with a neighbor in~$C_i$ forms a clique of size at most~$w$.
    \item\label{property_layering_sum:layer_w-1} The induced subgraph~$S[L_i]$ of each layer~$L_i$ is a subgraph of a $(w-1,k,t)$-sum.
    \item\label{property_layering_sum:is_layering} For every~$i \in [\ell]$, vertices in~$L_i$ only have neighbors in~$L_{i-1},L_i$ and~$L_{i+1}$.
\end{enumerate}
See \cref{fig:example-clique_sum_layering} for an illustration.

\begin{lemma}
    \label{lem:sum_natural_layering}
    Let~$k,t \geq 0$ and $w \geq 1$ be integers.
    Every $(w,k,t)$-sum admits a natural layering.
\end{lemma}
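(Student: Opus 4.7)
My plan is to prove the lemma by induction on the number $n$ of $(k,t)$-summands in a construction of the $(w,k,t)$-sum~$S$. For the base case $n=1$, when $S$ is a single $(k,t)$-summand, I would simply take $\ell=1$ and $L_1 = V(S)$: then (N1) holds because every $(k,t)$-summand is a $(0,k,t)$-sum, (N3) holds because a $(0,k,t)$-sum is a subgraph of a $(w-1,k,t)$-sum for $w \geq 1$, and (N2), (N4) are vacuous.

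For the inductive step, I would write $S$ as the clique-sum of a $(w,k,t)$-sum~$S'$ (with fewer summands) and a $(k,t)$-summand~$F$ along a clique~$Q$ of size at most~$w$. By induction, $S'$ admits a natural layering $L_1', \dots, L_{\ell'}'$. The key observation is that $Q$ is a clique in $S'$ and, by (N4) applied to this layering, edges of $S'$ only go between consecutive layers, so $Q$ lies in at most two consecutive layers $L_i' \cup L_{i+1}'$, where I choose $i$ to be the smallest index with $Q \cap L_i' \neq \emptyset$. The plan is then to add all new vertices $V(F) \setminus Q$ to the layer $L_{i+1}'$, creating a fresh empty layer $L_{\ell'+1}$ if needed. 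The degenerate case $Q = \emptyset$ is handled separately by attaching $V(F)$ as a new connected component to $L_1'$, which keeps $L_1$ a disjoint union of $(k,t)$-summands and hence a $(0,k,t)$-sum.

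With the placement fixed, properties (N1) and (N4) should follow directly, since every new edge introduced by $F$ lies inside $V(F) \subseteq L_i' \cup L_{i+1}'$. For (N2), a component of the new $S[L_{i+1}]$ either coincides with an old component, or it is the union of the old component $C_0$ containing $Q \cap L_{i+1}'$ with a subset of $F \setminus Q$. Its parent clique in $L_i'$ equals that of $C_0$, because the only new $L_i'$-neighbors of $F \setminus Q$ lie in $Q \cap L_i'$, which is already in $C_0$'s parent clique (as $Q$ is a clique in $S'$). Hence the size bound of $w$ is preserved.

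The main obstacle will be (N3) in the case where $Q$ straddles both $L_i'$ and $L_{i+1}'$. Here I plan to invoke the inductive hypothesis: $S'[L_{i+1}']$ sits inside some $(w-1,k,t)$-sum~$T$, and $F$ itself is a $(k,t)$-summand, hence a $(w-1,k,t)$-sum. Clique-summing $T$ with $F$ along the clique $Q \cap L_{i+1}'$ yields a $(w-1,k,t)$-sum that contains the new $S[L_{i+1}]$ as a subgraph. The critical quantitative point is that $|Q \cap L_{i+1}'| \leq w-1$: because $Q$ also meets $L_i'$, we have $|Q \cap L_{i+1}'| \leq |Q| - 1 \leq w - 1$, so the resulting graph indeed stays within the class of $(w-1,k,t)$-sums. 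All other layers are untouched and inherit their properties from the inductive hypothesis, completing the induction.
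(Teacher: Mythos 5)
Your proposal is correct and follows essentially the same route as the paper: induction on the number of summands, placing the new vertices $V(F)\setminus Q$ into the layer immediately after the smallest-index layer meeting $Q$, and verifying (N1)--(N4), with (N3) obtained by gluing the $(w-1,k,t)$-sum containing the old layer with $F$ along $Q\cap L_{i+1}'$. Your explicit observation that $\abs{Q\cap L_{i+1}'}\leq \abs{Q}-1\leq w-1$ is exactly the point the paper's argument relies on (though it leaves it implicit).
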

\begin{proof}
    We proceed by induction on the number~$s$ of~$(k,t)$-summands.
    If~$s = 1$, all vertices of~$S$ are assigned to the first layer~$L_1$.
    This layering clearly satisfies (N\ref{property_layering_sum:root-layer})-(N\ref{property_layering_sum:is_layering}).
    
    Now let~$s \geq 2$ and let~$F_s$ be the last summand that is attached to a clique~$Q$.
    Consider the $(w,k,t)$-sum~$S'$ obtained from~$S$ by deleting all vertices of~$F_s - Q$.
    By induction, $S'$ admits a natural layering~$L_1', \dots, L_{\ell'}'$. 
    Let~$q$ be the smallest index of a layer that contains vertices of~$Q$.
    We set~$q$ to~$0$ if there is no such layer.
    Assigning the vertices of~$F_s - Q$ to the $(q+1)$-th layer, we obtain a layering~$L_1, \dots, L_{\ell}$ of~$S$, see \cref{fig:example-clique_sum_layering}.
    \begin{figure}
        \savebox{\imagebox}{\includegraphics[page=3]{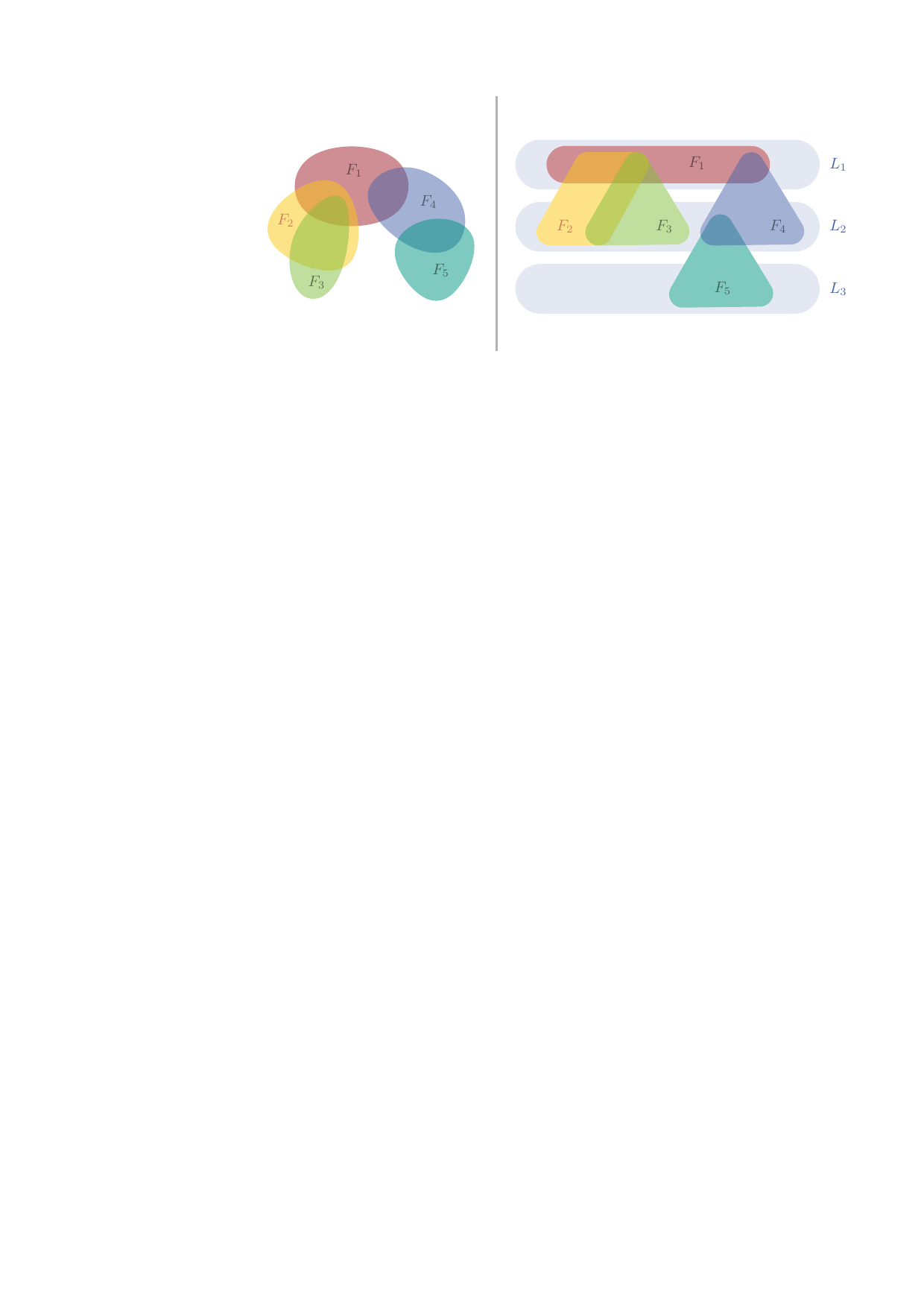}}
        \centering
        \begin{subfigure}[b]{0.4\textwidth}
        \centering\raisebox{\dimexpr.5\ht\imagebox-.5\height}{\includegraphics[page=2]{clique-sums.pdf}}
        \caption{}
        \label{fig:clique-sum}
        \end{subfigure}\hfill
        \begin{subfigure}[b]{0.6\textwidth}
        \centering\usebox{\imagebox}
        \caption{}
        \label{fig:clique-sum-layering}
        \end{subfigure}
        \caption{(\subref{fig:clique-sum}) A $(w,k,t)$-sum~$S$. Each summand~$F_i$ has a different color. The indices correspond to the insertion order of the summands~$F_i$, their intersections to the cliques on which they are attached. (\subref{fig:clique-sum-layering}) The natural layering of~$S$ obtained by \cref{lem:sum_natural_layering}.}
        \label{fig:example-clique_sum_layering}
    \end{figure}
    Note that we might have created a new layer.

    It remains to show that~$L_1, \dots, L_{\ell}$ is a natural layering of~$S$.
    Property~(N\ref{property_layering_sum:parent-clique}) clearly holds by construction of the layering.
    
    Case 1. $q=0$. 
    The graph~$F_s$ is disjoint from all other summands of~$S$.
    All vertices of~$F_s$ are assigned to the first layer and (N\ref{property_layering_sum:root-layer})-(N\ref{property_layering_sum:is_layering}) hold.
    
    Case 2. $q \geq 1$. 
    The graph~$F_s$ is attached to a clique~$Q$ and at least one vertex of~$Q$ belongs to a layer different from~$L_{q+1}$.
    Property (N\ref{property_layering_sum:root-layer}) holds as the first layer remains unchanged. 
    
    We first observe that~(N\ref{property_layering_sum:is_layering}) still holds. 
    This follows from the fact that every clique of~$S'$ is contained in at most two adjacent layers.
    Thus, the clique~$Q$ only has vertices in~$L_q'$ and~$L_{q+1}'$ and vertices of~$F_s-Q$ only have neighbors in these two layers.
    
    Note that the graph~$S[L_{q+1}]$ is obtained from the induced graph~$S'[L_{q+1}']$ and~$F_s - (Q \cap L_q')$ where we glued along the clique~$Q \cap L_{q+1}'$.
    Hence, (N\ref{property_layering_sum:layer_w-1}) holds. 
\end{proof}

\section{Strong Odd Colorings for Outerplanar Graphs}
\label{sec:outerplanar}

In our proof of~\cref{prop:outerplanar-at-most-8} we make use of the properties of BFS-layerings~$k$-trees similar to \cref{obs:bfs-layering} but adapted to the special case of outerplanar graphs.
Outerplanar graphs have treewidth at most~$2$ and in fact edge-maximal outerplanar graphs are so-called \emph{simple~$2$-trees}, corresponding to the notion of \emph{simple treewidth}~\cite{knauer2012simple}.
Analogous to \cref{obs:bfs-layering}, a BFS-layering $L_1,L_2,\ldots$ of an outerplanar $2$-tree $H$ (i.e., edge-maximal outerplanar graph) has the following properties.
\begin{itemize}
    \item The first layer $L_1$ is an outer edge of $H$.
    \item Each layer $L_i$ with $i \geq 2$ is a vertex-disjoint union of paths, one for each edge in $L_{i-1}$.
    \item For any edge $xy$ on $L_{i-1}$ the corresponding path $P$ on $L_i$ consists of two edge-disjoint subpaths $P_x,P_y$ on vertices $V(P) \cap N_H(x)$ and $V(P) \cap N_H(y)$, respectively.
    The shared vertex of $P_x$ and $P_y$ is the only vertex on $P$ with two neighbors in $L_{i-1}$.
\end{itemize}

\begin{proof}[Proof of \cref{prop:outerplanar-at-most-8}]
    Given an outerplanar graph $G$, we take an edge-maximal outerplanar graph $H$ with $G \subseteq H$ and consider a BFS-layering $L_1,L_2,\ldots$ of $H$ with the above-mentioned properties.
    Let us call a coloring \emph{good on a path $P$} if every two vertices at distance at most~$2$ on $P$ have distinct colors.
    We shall construct an $8$-coloring $\psi \colon V(H) \to [8]$ such that
    \begin{enumerate}[(O1)]
        \item $\psi$ is a proper coloring of $H$,\label{item:proper-H}
        \item for every vertex $v$ of $H$ on any layer $L_i$, any two vertices in $N_H(v) - L_{i+1}$ have distinct colors (in particular $\psi$ is good on every path in every layer), and\label{item:good-on-paths}
        \item $\psi$ restricts to a strong odd coloring on the graph~$G$.\label{item:strong-odd-G}
    \end{enumerate}
    We shall define the coloring $\psi$ layer by layer, starting with colors $1$ and $2$ for the two vertices on $L_1$.
    In fact, we may assume that $L_1$ and $L_2$ contain no vertices of $G$.
    Hence, it is enough to color the path on $L_2$ by repeating the colors $3,4,5,3,4,5,\ldots$ along the path.
    
    To color the layers $L_i$ with $i \geq 3$, we use the following claim.

    \begin{figure}
        \centering
        \includegraphics[page=2]{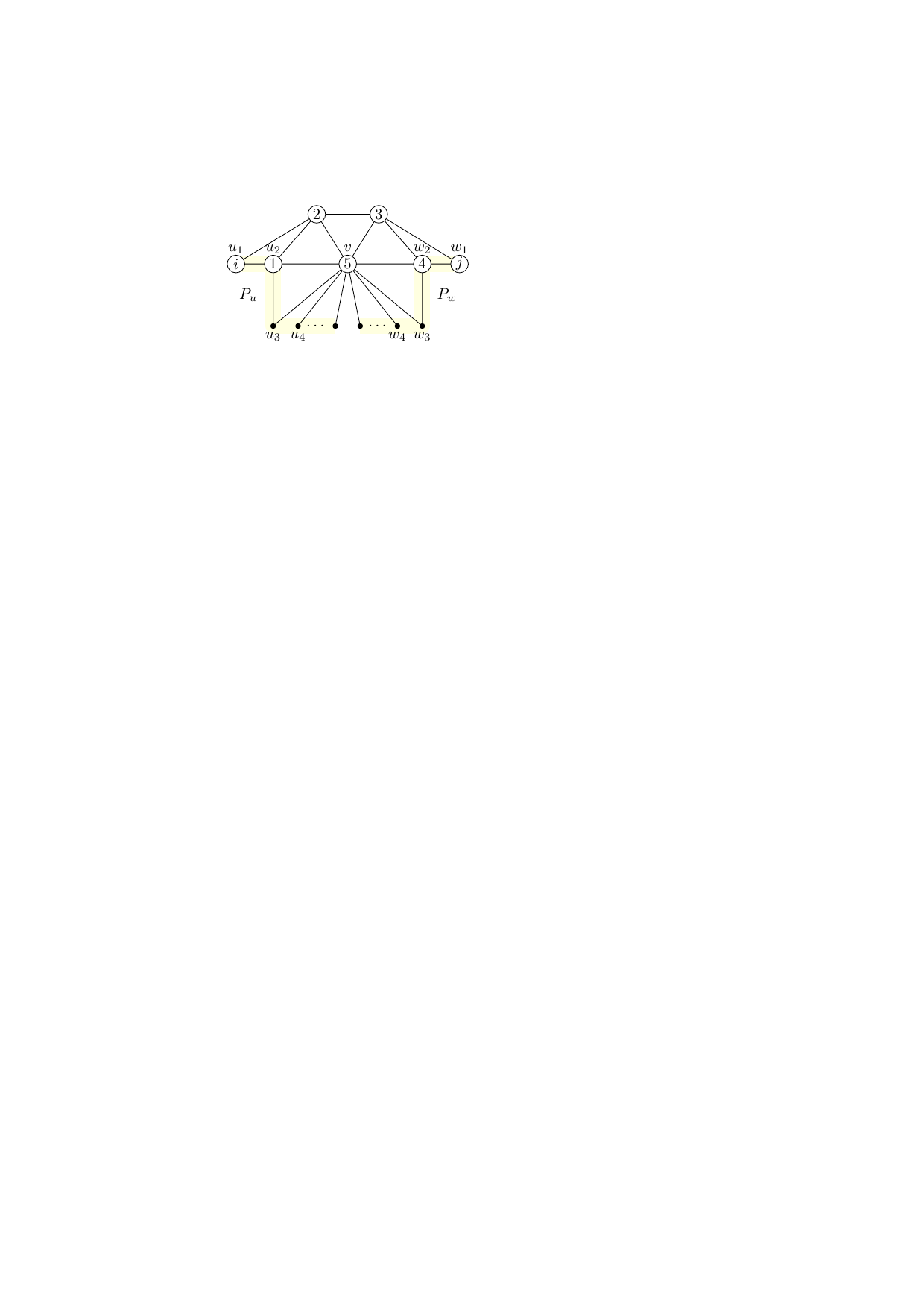}
        \caption{The partially precolored outerplanar graph $A$ in \cref{claim:outerplanar}.}
        \label{fig:outerplanar-claim}
    \end{figure}

    \begin{claim}\label{claim:outerplanar}
        Let $A$ be an outerplanar graph with a partial coloring $\psi \colon V(A) \to [8]$ as depicted in \cref{fig:outerplanar-claim}, consisting of a triangle colored $2,3,5$ and two paths $P_u,P_w$.
        Assume that $\psi$ is proper (i.e., $i \neq 1,2$ and $j \neq 3,4$) and good on the path $[u_1,u_2,v,w_2,w_1]$ (i.e., $i,j \neq 5$).
        
        Let $G_A \subseteq A$ be any subgraph of $A$.
        Then $\psi$ can be extended to a proper $8$-coloring of $A$ that is good on $P_u$ and $P_w$, and strong odd for $G_A$, and satisfies $\psi(u_3) \neq 2$ and $\psi(w_3) \neq 3$.
    \end{claim}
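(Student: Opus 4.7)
The plan is to extend $\psi$ along the two paths $P_u$ and $P_w$ independently. Since all interactions between the two sides go through the already-colored vertices of the triangle and through the precolored $u_2, w_2$, the extensions along $P_u$ and $P_w$ do not interfere with one another. By the symmetry of the setup---swapping the color roles $1 \leftrightarrow 4$, $2 \leftrightarrow 3$ while fixing $5$ and the spare colors $\{6,7,8\}$---it suffices to describe the coloring of $u_3, u_4, \ldots, u_k$ in detail, with the argument for $w_3, w_4, \ldots$ being analogous.

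At each new vertex $u_j$ with $j \geq 3$, the local constraints are: properness with $u_1$ (color $2$) and $u_{j-1}$ forces $\psi(u_j) \notin \{2, \psi(u_{j-1})\}$, and goodness on $P_u$ additionally forces $\psi(u_j) \neq \psi(u_{j-2})$. Since $A$ is outerplanar, $u_j$'s neighborhood in $A$ (for $j \geq 3$) lies inside $\{u_1, u_{j-1}, u_{j+1}\}$, and so the strong-odd condition at $u_j$ in $G_A$ reduces to requiring that these at most three neighbors receive distinct colors---which already follows from properness together with goodness. The side condition $\psi(u_3) \neq 2$ is then immediate since $u_3$ is adjacent to $u_1$.

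The genuinely global constraint is the strong-odd condition at the apex $u_1$, which is adjacent to $v, w_1, u_2, u_3, \ldots, u_k$. For each color $c$, the count $|N_{G_A}(u_1) \cap \psi^{-1}(c)|$ must be odd or zero. The precolored contributions from $\{w_1, v, u_2\}$ are the distinct colors $\{3, 5, i\}$, so the parities on $\{u_3, \ldots, u_k\}$ must be chosen to round each total to odd. My plan is to color $u_3, \ldots, u_k$ cyclically using three colors $\{a, b, c\} \subseteq [8] \setminus \{2, 5, i\}$ in a rotating pattern that automatically maintains goodness, and then to swap the final one or two entries for colors from $\{6, 7, 8\}$ to correct any wrong parities. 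The apex $v$ has strong-odd neighborhood $\{u_1, w_1, u_2, w_2\}$ within the current scope, already colored with the four distinct colors $\{2, 3, i, j\}$, so $v$'s parity is automatically satisfied and is not affected by the extension.

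The main obstacle is organizing this tail adjustment so that the parity fix at the end of $P_u$ neither violates goodness on $P_u$ nor breaks the strong-odd condition at $u_{k-1}$ and $u_k$. I expect a short case analysis on $|V(P_u)| \bmod 3$, on the precolor $i$, and on whether certain edges between $u_1$ and the precolored neighbors lie in $G_A$. The three reserved spare colors $\{6, 7, 8\}$, combined with the fact that properness and goodness forbid at most three colors at each vertex (leaving at least five free choices), give enough slack to absorb any parity mismatch without breaking local constraints. Running the symmetric argument for $P_w$, which produces the condition $\psi(w_3) \neq 3$, then completes the proof.
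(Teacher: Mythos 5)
Your proposal rests on a misreading of the graph $A$. In \cref{fig:outerplanar-claim} the uncolored vertices $u_3,u_4,\dots$ and $w_3,w_4,\dots$ are precisely the uncolored neighbors of $v$, the triangle vertex colored $5$: this is how \cref{claim:outerplanar} is used in the proof of \cref{prop:outerplanar-at-most-8} (it colors the neighbors of $v$ on $L_{i+1}$ so that $|N_G(v)\cap\psi^{-1}(c)|$ is zero or odd), and it is also what the paper's own proof of the claim manipulates ($N_{G_A}(v)\cap\phi^{-1}(6)$, etc.). You instead take $u_1$ (which you moreover assume to be colored $2$) and $w_1$ as apexes of two separate fans and assert that $v$'s strong-odd condition is ``automatically satisfied.'' With the correct structure this is exactly backwards: the condition at $v$ is the entire difficulty, and it couples the two paths, since every uncolored vertex of both $P_u$ and $P_w$ is adjacent to $v$; so the claimed independence of the two extensions fails. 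Your reading is also inconsistent with the stated hypotheses and conclusions: ``proper, i.e.\ $i\neq 1,2$'' forces $u_1$ to carry color $1$ and the color-$2$ vertex to be a triangle vertex not adjacent to $u_3$, which is why $\psi(u_3)\neq 2$ and $\psi(w_3)\neq 3$ are genuine extra requirements (needed for the next application of the claim along the layer), not consequences of properness as you claim; and even inside your own model the colors $\{2,3,i,j\}$ need not be distinct, since $i=3$ and $j=2$ are allowed.

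Independently of the structure, the step that would do the real work is only a plan and does not obviously go through. Repairing parities by ``swapping the final one or two entries'' of a cyclic tail has three problems: a swap flips the parity of two color classes at the apex simultaneously; the apex need not be $G_A$-adjacent to the tail vertices you recolor (recall $G_A\subseteq A$ is arbitrary, so the swap may change no relevant count at all); and the vertices whose recoloring does affect the counts may sit anywhere on the path, where goodness with both colored neighbors constrains the replacement color. The paper's proof handles exactly this point with a different mechanism: it fixes the periodic $\{6,7,8\}$-coloring once, and then for each color class with an even nonzero count at $v$ it either recolors the whole class or splits it into two sets each meeting $N_{G_A}(v)$ an odd number of times, using reserved replacement colors from $\{3,4\}$ on the $u$-side and $\{1,2\}$ on the $w$-side; goodness survives because same-colored vertices are pairwise at distance at least $3$ on $P_u$ and $P_w$. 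Some device of this kind---wholesale recoloring or parity-controlled splitting of color classes relative to $N_{G_A}(v)$, with side-specific spare colors---is what your sketch is missing.
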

    \begin{claimproof}
        We may assume (by renaming colors if needed) that $i \neq 6$ and $j \neq 8$.
        Let $P'_u = P_u - \{u_1,u_2\}$ and $P'_w = P_w - \{w_1,w_2\}$ be the two paths of uncolored vertices.
        As a preliminary coloring, let $\phi \colon P'_u \cup P'_w \to \{6,7,8\}$ be the (unique) proper coloring that is good on $P'_u$ and $P'_w$ with $\phi(u_3) = 6$ and $\phi(u_4)= \phi(w_4) = 7$ and $\phi(w_3) = 8$.
        Extending $\psi$ by $\phi$ yields a proper coloring of $A$ that is good on $P_u$ and $P_w$.
        If this extension is strong odd for $G_A$, we are done. 
        Otherwise, vertex $v$ has in $G_A$ an even non-zero number of neighbors colored $6$ (or $7$, or $8$).

        If $|N_{G_A}(v) \cap \phi^{-1}(6)|$ is even and non-zero, we consider a color $a \in \{3,4\}$ with $a \neq i$.
        If $v$ is adjacent in $G_A$ to a precolored vertex of color $a$, we give all vertices in $\phi^{-1}(6)$ the color $a$.
        Note that this is still good on $P_u$ and $P_w$ since $w_3,w_4 \notin \phi^{-1}(6)$ and $a \in \{3,4\}$.
        Moreover, $v$ has (in $G_A$) no neighbor of color~$6$ and an odd number of neighbors of color $a$ in the resulting coloring.
        If $v$ is not adjacent in $G_A$ to a precolored vertex of color $a$, we split $\phi^{-1}(6)$ into a set colored $a$ and a set colored $6$, such that $v$ has (in $G_A$) an odd number of neighbors of either color.
        Note again that the resulting coloring is still good on $P_u$ and $P_w$.

        The case that $|N_{G_A}(v) \cap \phi^{-1}(8)|$ is even and non-zero, is symmetric.
        We consider a color $b \in \{1,2\}$ with $b \neq j$ and proceed as in the previous case.
        Since $a \in \{3,4\}$ and $b \in \{1,2\}$, we have $a \neq b$.
        This ensures that the coloring is still good on $P_u$ and $P_w$.

        Finally, if $|N_{G_A}(v) \cap \phi^{-1}(7)|$ is even and non-zero, we consider a color $c \in \{3,4\}$ with $c \neq a$, and a color $d \in \{1,2\}$ with $d \neq b$.
        We split $\phi^{-1}(7)$ into a set containing $u_4$ colored $c$ and a set containing $w_4$ colored $d$, such that $v$ has (in $G_A$) an odd number of neighbors of either color.
        The resulting coloring is again good on $P_u$ since $u_4$ is colored $3$ or $4$ but not $a$, and also good on $P_w$ since $w_4$ is colored $1$ or $2$ but not $b$.

        This way, we ensure that for each color $\alpha \in [8]$, we have that $|N_{G_A}(v) \cap \psi^{-1}(\alpha)|$ is either zero or odd.
        Finally, note that $\psi(u_3) \in \{3,4,6\}$ and hence $\psi(u_3) \neq 2$.
        Symmetrically, $\psi(w_3) \in \{1,2,8\}$ and hence $\psi(w_3) \neq 3$.
        Thus, $\psi$ is a strong odd coloring for the entire $G_A$.
    \end{claimproof}

    Now, let us use \cref{claim:outerplanar} to define the coloring $\psi$ on layer $L_{i+1}$ with $i\geq 2$, given that layers $L_1,\ldots,L_i$ are already colored satisfying (O\ref{item:proper-H})--(O\ref{item:strong-odd-G}).
    Consider one path $P$ on layer $L_i$ and all the neighbors of $P$ in $L_{i+1}$.
    (The neighbors of the other paths in $L_i$ can be treated independently.)
    Let $v$ be the unique vertex on $P$ with two neighbors $x,y$ on $L_{i-1}$.
    Consider the subpath $[u_1,u_2,v,w_2,w_1]$ of $P$ of length five with middle vertex $v$.
    (By choosing $H$, we may assume that $P$ is long enough, i.e., $u_1,u_2,w_2,w_1$ exist.)
    The already colored vertices $u_1,u_2,v,w_2,w_1,x,y$ together with all uncolored neighbors of $v$ on $L_{i+1}$ form a partially precolored outerplanar graph $A$ as in \cref{claim:outerplanar}.
    (Again, we may assume that $u_3,u_4,w_4,w_3$ exist.)
    As the precoloring satisfies (O\ref{item:proper-H}) and (O\ref{item:good-on-paths}), we can rename the colors to match the assumptions of \cref{claim:outerplanar}.
    Thus, the $8$-coloring $\psi$ can be extended to the neighbors of $v$ such that for every $c \in [8]$ the cardinality of $N_G(v) \cap \psi^{-1}(c)$ is either zero or odd.
    Moreover, \cref{claim:outerplanar} gives $\psi(u_3) \neq \psi(x)$ and $\psi(w_3) \neq \psi(y)$.
    In particular, for every so-far colored vertex, any two of its colored neighbors have distinct colors.

    \begin{figure}
        \centering
        \includegraphics[page=3]{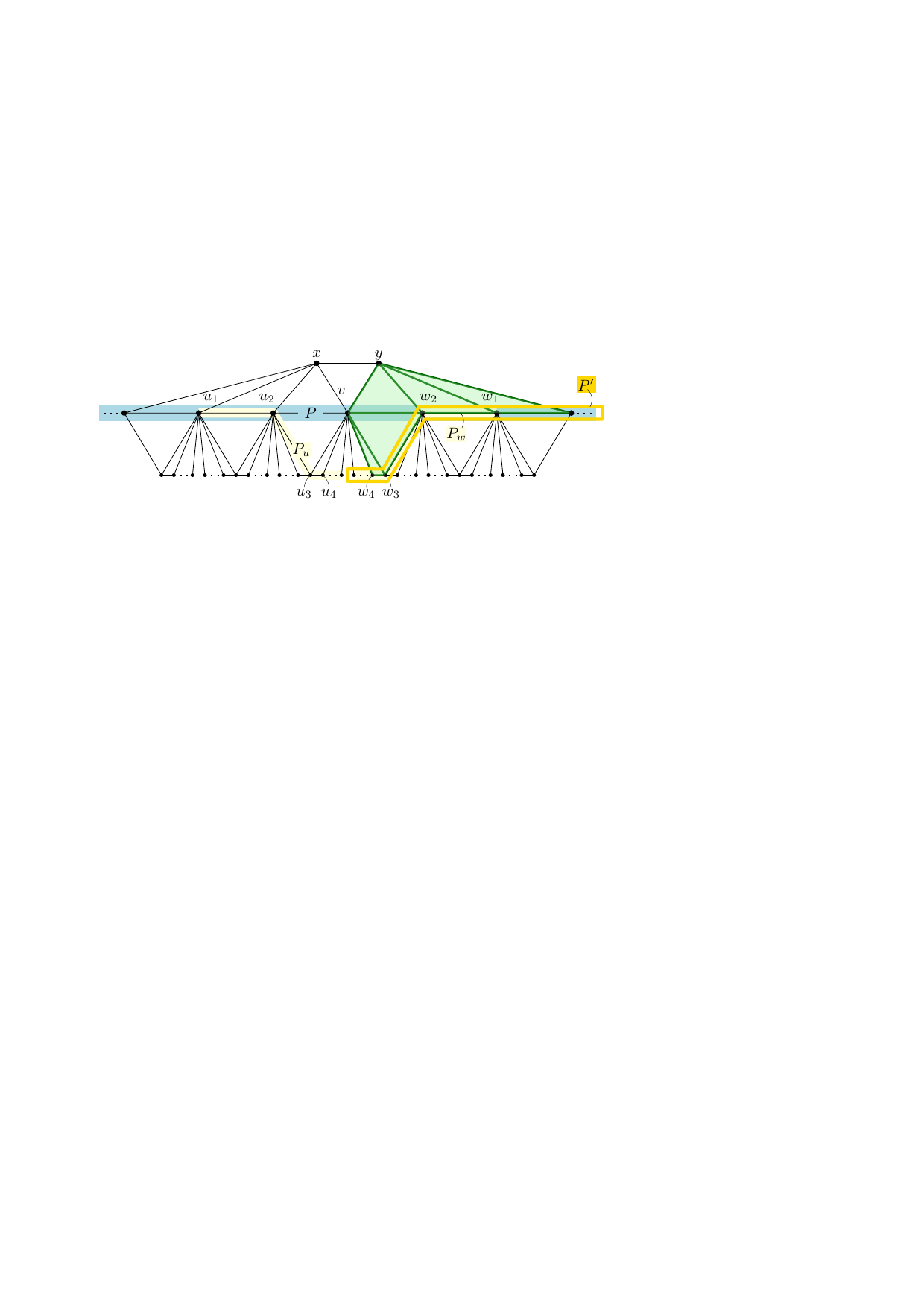}
        \caption{Illustrating the proof of \cref{prop:outerplanar-at-most-8}.}
        \label{fig:outerplanar-step}
    \end{figure}

    Next, we extend the coloring $\psi$ to the neighbors of $w_2$ (the case of $u_2$ is symmetric).
    Consider the path $P'$ obtained by gluing $P_w$ and a subpath of $P$ at $w_1$, see \cref{fig:outerplanar-step}, and observe that coloring $\psi$ is good on $P'$.
    Moreover, $w_2$ is the unique vertex on $P'$ with two colored neighbors ($v$ and $y$) outside $P'$.
    In other words, we are in the same situation as before, with $w_2$ taking the role of $v$ and $P'$ taking the role of $P$.
    By \cref{claim:outerplanar} we can extend the coloring $\psi$ to the neighbors of $w_2$ on $L_{i+1}$, and afterwards proceed with $w_1$ and so forth, to eventually color all neighbors of $P$.
\end{proof}

\section{Strong Odd Colorings for Small Treewidth}
\label{sec:treewidth}

In this section, we shall give an upper bound on the strong odd chromatic number for any graph $G$ in terms of its treewidth.
In particular, any graph class with bounded treewidth has bounded strong odd chromatic number.

\begin{theorem}\label{thm:treewidth-main}
    There exists a function~$f$ such that for every graph~$G$ we have \[\chiso(G) \leq f(\tw(G)).\]
\end{theorem}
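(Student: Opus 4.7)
The plan is to induct on $k = \tw(G)$. For the induction step, fix a $k$-tree $H$ with $G \subseteq H$ and a BFS-layering $L_1, \ldots, L_\ell$ as in \cref{obs:bfs-layering}. I rely on two structural properties: each $G[L_i]$ is a partial $(k-1)$-tree by (B\ref{property:layer_tw_k-1}), and all edges of $G$ lie within a layer or between consecutive layers by (B\ref{property:is_layering}). To eliminate inter-layer conflicts cheaply, I use three pairwise disjoint palettes $P_0, P_1, P_2$ of equal size and color layer $L_i$ from $P_{i \bmod 3}$. This automatically yields a proper coloring of $H \supseteq G$ and cleanly decouples the strong-odd conditions at a vertex $v \in L_d$ into three independent conditions, one for each of $L_{d-1}, L_d, L_{d+1}$.

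Cross-layer strong-odd constraints force the induction to carry more information than just a coloring of $G$. I would prove the following stronger statement by induction on $k$: there is a function $f$ such that for every graph $G'$ with $\tw(G') \leq k$ and every family of \emph{demand sets} $M_1, \ldots, M_m \subseteq V(G')$ of a controlled form---specifically, each $M_j$ is the neighborhood in $V(G')$ of a single vertex of an external $k$-clique attached to $G'$, which is exactly what a BFS-layering one level above produces---there exists a proper $f(k)$-coloring of $G'$ that is strong odd on every vertex neighborhood \emph{and} on every $M_j$. When I apply this at the outer level to color layer $L_{d+1}$, the demand sets are $\{N_G(v) \cap L_{d+1} : v \in L_d\}$, which by (B\ref{property:parent_k-clique}) decompose componentwise with only $k$ demand sets per component---precisely the controlled form required by the strengthened hypothesis at parameter $k-1$.

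The main obstacle is satisfying the demand-set parities using only $2^{O(k)}$ extra colors per palette. A coloring returned by the $(k-1)$-recursion is strong odd on vertex neighborhoods but may have some color classes intersecting a demand set $M_j$ in an even nonzero number of elements; to repair, I reserve a small budget of auxiliary colors and recolor a carefully chosen subset of each offending class, encoding the simultaneous choice of subsets as the solution of an $\mathbb{F}_2$ linear system indexed by the $O(k)$ demand sets of the component. A secondary difficulty---and the technical heart of the argument---is to verify that when recursing into a single layer $L_i$, the demand sets inherited from outside remain compatible with the controlled form required by the inner BFS-layering of the smaller $(k-1)$-tree enclosing $G[L_i]$; this requires reconciling the outer layering on $H$ with the inner layering chosen for $L_i$. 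The resulting recursion $f(k) = 2^{O(k)} f(k-1)$ yields an exponential bound, which is unavoidable: by \cref{obs:nobinding} the graph $G_k$ has treewidth $k$ and $\chiso(G_k) = 2^k + 1$.
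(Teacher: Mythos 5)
Your skeleton coincides with the paper's: induct on $k$ via a BFS-layering of an enclosing $k$-tree (\cref{obs:bfs-layering}), use palettes repeating every third layer, strengthen the induction with extra ``demand'' sets, and color each layer componentwise, feeding the $k$ neighborhoods of the parent-clique (B\ref{property:parent_k-clique}) into the recursion one level down --- this is exactly the shape of \cref{prop:treewidth-fine}. But the proposal has a genuine gap at what is the actual crux of that proof: \emph{cross-component aggregation}. The components of $G[L_{d+1}]$ are colored independently from the same palette, and a vertex $v\in L_d$ typically lies in many $k$-cliques of $L_d$, each of which is the parent-clique of several components; hence $N_G(v)\cap L_{d+1}$ is a union of many per-component pieces. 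Your recursion (or your per-component $\mathbb{F}_2$ repair) only guarantees that each \emph{piece} meets each color class in zero or an odd number of vertices, and a union of odd-sized pieces can be even, so per-component strong oddness does not combine into the condition at $v$. The paper spends most of its effort precisely here: it assigns each parent-clique a \emph{type} (the matrix $A_Q$ recording which colors occur in each restricted neighborhood/demand set), proves the synchronization claim (same type implies a color occurs in the piece at $Q$ iff it occurs in the piece at $Q'$), and adds a further coordinate $\sigma_A(Q)$ coming from \cref{lem:treewidth-clique-coloring} --- a coloring of the $k$-cliques of the layer that is strong odd on the cliques through each vertex and has odd-sized color classes, itself extracted from the inductive hypothesis. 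Only then does ``an odd number of pieces, each of odd size'' yield odd totals; nothing in your proposal plays this role. The same aggregation problem recurs for the inherited demand sets, both across components within a layer and across layers (the paper additionally needs the claim that any color meeting both $M_j$ and a layer already meets $M_j$ inside that layer, plus the final modification forcing every color to appear on an odd number of layers).

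Two further points. First, your restriction of demand sets to ``neighborhoods of a vertex of an external attached clique'' creates exactly the compatibility problem you flag as the technical heart and leave unresolved: two levels down, the inherited sets are restrictions of such neighborhoods to components of an inner layering and no longer have that form. The paper sidesteps this entirely by letting the $M_j$ in \cref{prop:treewidth-fine} be \emph{arbitrary} subsets (at the cost of tracking their number, $m\mapsto m+k\ell$, in the bound) --- but that generality is precisely why the type/clique-coloring machinery above is needed. Second, the $\mathbb{F}_2$-repair step is unsubstantiated: recoloring parts of offending classes must simultaneously preserve properness, the strong-odd condition on all neighborhoods, and the parities of the other demand sets, and it is not clear the resulting system is always solvable; the paper avoids any such repair by building the demand sets into the inductive statement itself.
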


In order to prove \cref{thm:treewidth-main}, we shall proceed by induction on the treewidth, showing a stronger statement (cf.~\cref{prop:treewidth-fine} below).
As we will need it in later sections, we consider strong odd colorings of \emph{directed} graphs. 
In fact, let us (for the rest of \cref{sec:treewidth,sec:row-treewidth}) identify every undirected graph~$G$ with the directed graph obtained by adding the directed edges $\vec{uv}$ and $\vec{vu}$ for each undirected edge $uv$.
As any strong odd coloring of the obtained directed graph is a strong odd coloring of~$G$, it suffices to consider directed graphs.

\begin{proposition}\label{prop:treewidth-fine}
    Let $k \geq 0$ and $\ell,m \geq 1$ be integers.
    There exists a constant $f_1(k,\ell,m)$ such that for every undirected $k$-tree~$G$, every collection $\vec{H}_1,\ldots,\vec{H}_\ell$ of directed subgraphs of~$G$, and every collection~$M_1, \ldots, M_m$ of subsets of~$V(G)$, there is a coloring $\Psi \colon V(G) \to [f_1(k,\ell,m)]$ of~$G$ with the following properties:
    \begin{enumerate}[(T1)]
        \item $\Psi$ is a proper coloring of $G$.\label{enum-tw:proper}
        \item $\Psi$ restricts to a strong odd coloring on the directed graph~$\vec{H}_i$ for every $i \in [\ell]$.\label{enum-tw:strong-odd}
        \item $\Psi$ is strong odd on the set~$M_j$ for every~$j \in [m]$.\label{enum-tw:marks-odd}
    \end{enumerate}
\end{proposition}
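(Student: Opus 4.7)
My plan is to prove the proposition by induction on $k$.

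For the base case $k = 0$, the $k$-tree $G$ is edgeless, so (T1) and (T2) are vacuous and only (T3) matters. I would partition $V(G)$ by the membership vector in $\{0,1\}^m$ encoding which of $M_1, \ldots, M_m$ each vertex belongs to. Within each of the at most $2^m$ resulting type classes, I would split the vertices into at most two color subclasses of odd cardinality, giving every subclass its own color. Since within a color class all vertices have the same membership pattern, the intersection with any $M_{j'}$ is either empty or the whole odd-sized class, establishing (T3); this bounds $f_1(0,\ell,m)$ by roughly $2^{m+1}$.

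For the inductive step with $k \geq 1$, I apply \cref{obs:bfs-layering} to obtain a BFS-layering $L_1, L_2, \ldots$ of $G$ and use three disjoint palettes $P_0, P_1, P_2$, coloring each $L_i$ using only colors from $P_{i \bmod 3}$; properness between any two adjacent layers is then automatic since they use disjoint palettes. Since $G[L_i]$ is a partial $(k-1)$-tree by (B3), I enlarge it to a $(k-1)$-tree and invoke the inductive hypothesis on this enlargement. The inductive call for $L_i$ is given the $\ell$ restricted directed subgraphs $\vec{H}_j[L_i]$, the subsets $M_{j'} \cap L_i$ for $j' \in [m]$, and for each $v \in L_{i-1}$ and $j \in [\ell]$ an auxiliary subset $N^+(\vec{H}_j,v) \cap L_i$, which together enforce the strong odd condition along the $L_{i-1}$-$L_i$ interface.

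The structural observation that makes this step viable is that the analogous interface between $L_i$ and $L_{i+1}$ requires no auxiliary subsets: for any $v \in L_{i+1}$ and any $j$, the set $N^+(\vec{H}_j,v) \cap L_i$ is contained in the attachment $k$-clique of $v$'s component in $L_{i+1}$ (by (B2) applied to $L_{i+1}$), which is a clique of the $k$-tree $G$ and hence receives $k$ distinct colors under any proper coloring, so every one of its subsets is trivially strong odd. Combined with the palette separation, the strong odd condition at every vertex $v$ decomposes into three pieces---one per adjacent layer---handled either by the inductive call on the appropriate layer or by the auxiliary $N^+$-subset passed to it. Using (B2) again to note that each component of $G[L_i]$ has at most $k$ parents in $L_{i-1}$, I would aim for a recursion of the form $f_1(k,\ell,m) \leq 3 \cdot f_1(k-1,\ell,m+h(k,\ell))$ for a small function $h$, which unwinds to the exponential-in-$k$ bound the paper predicts.

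The hardest part is efficiently encoding the parent-side constraints and then reconciling a per-layer (or per-component) strong odd guarantee with the global conditions (T2) and (T3). A single parent $v \in L_{i-1}$ may have neighbors in many components of $G[L_i]$, and a subset $M_{j'}$ may meet many layers sharing the same palette modulo~$3$, so odd contributions from several components or layers can sum to an even count and violate strong odd. I would address both issues by strengthening the inductive hypothesis to track, for each auxiliary subset and each color, the exact parity (odd versus empty) of the intersection, and by coordinating the sub-colorings across components and across layers of the same residue class so that these parities compose correctly, absorbing the resulting overhead into the function $h$ and the constant factor on the palette size.
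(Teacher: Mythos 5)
Your skeleton matches the paper's: induction on $k$, base case via membership vectors split into odd-sized classes, BFS-layering with palettes reused every third layer, inductive per-layer colorings fed the restricted $\vec{H}_i$'s, the restricted $M_j$'s, and auxiliary out-neighborhood sets for the layer above, with the downward edges handled for free because $N(v)\cap L_{i-1}$ lies in a $k$-clique. But there is a genuine gap at exactly the point you flag as "the hardest part," and your proposed fix is not a proof. First, as literally stated, passing an auxiliary set $N^+(\vec{H}_j,v)\cap L_i$ for \emph{every} $v\in L_{i-1}$ into a single inductive call on $L_i$ makes the number of auxiliary sets grow with $|L_{i-1}|$, so the resulting color count is not a function of $k,\ell,m$. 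The only way to keep it bounded (and what the paper does) is to call induction separately on the children $G_Q$ of each parent $k$-clique $Q$, passing only $k\ell$ neighborhood sets per call (indexed by a proper $k$-coloring of $L_{i-1}$ restricted to $Q$). But then the aggregation problem you yourself identify becomes unavoidable: a vertex $v\in L_{i-1}$ lies in many parent-cliques $Q_1,\dots,Q_t$, each contributing an odd-or-zero number of same-colored out-neighbors, and the sum can be even; likewise $M_j$ meets many layers on the same palette. Saying you will "track parities and coordinate the sub-colorings so that they compose correctly" names the problem rather than solving it.

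The paper's resolution requires two concrete devices that are absent from your plan. One is a \emph{type} for each parent-clique (a $0$-$1$ matrix recording, per color of $\varphi_Q$, whether each set $M_{j,Q}$ and each $N_{i,h,Q}$ is hit), appended to the color so that cliques sharing a color class are synchronized: either all their contributions are zero or all are odd. The other is a coloring $\sigma_A$ of the parent-cliques of each type such that, for every vertex $v$, the cliques of a given $\sigma_A$-color containing $v$ number zero or odd, \emph{and} every color class of $\sigma_A$ has odd size; this makes the number $t$ of contributing cliques odd, and (together with a final recoloring forcing every color of the product coloring to appear on an odd number of layers) makes the number of contributing layers odd for each $M_j$. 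Constructing $\sigma_A$ is itself nontrivial: it is \cref{lem:treewidth-clique-coloring}, proved by representing each $(k+1)$-clique by the vertex that creates it and recursively invoking \cref{prop:treewidth-fine} at level $k-1$ on an auxiliary directed graph and an auxiliary vertex set. Without this clique-coloring step (or an equivalent mechanism), the cross-component and cross-layer parities in your argument do not compose, so conditions (T\ref{enum-tw:strong-odd}) for upward edges and (T\ref{enum-tw:marks-odd}) remain unestablished.
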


Before we prove \cref{prop:treewidth-fine}, let us deduce one of its consequences. Note that it might look odd, that we only use \cref{lem:treewidth-clique-coloring} inside the proof of \cref{prop:treewidth-fine}, but we will use it assuming \cref{prop:treewidth-fine} by induction.

\begin{lemma}
    \label{lem:treewidth-clique-coloring}
    If \cref{prop:treewidth-fine} holds for a fixed integer~$k \geq 0$ and all integers~$\ell,m \geq 1$, then the following statement holds as well.
    
    \smallskip
    
    There exists a constant~$g_1(k)$ such that for every $k$-tree~$G$ and every subset~$S$ of the $(k+1)$-cliques in $G$, there is a coloring $\sigma \colon S \to [g_1(k)]$ of~$S$ with the following properties:
    \begin{enumerate}[(T'1)]
     \item For every vertex~$v$ of~$G$, the coloring~$\sigma$ is strong odd on the set of~$(k+1)$-cliques containing~$v$, and
     \label{enum-clique-coloring:vertex_in_odd_number_of_cliques}
     \item every color class has odd size.\label{enum-clique-coloring:odd_sized_color_classes}
    \end{enumerate}
\end{lemma}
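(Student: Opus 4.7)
The plan is to derive the clique-coloring $\sigma$ from a single application of \cref{prop:treewidth-fine} to $G$ itself. Fix any construction sequence of the $k$-tree $G$: then each non-initial vertex $u$ was attached to a unique $k$-clique $K_u$, and $u \mapsto C_u := K_u \cup \{u\}$ is a bijection between the non-initial vertices of $G$ and the $(k+1)$-cliques of $G$. Denote the inverse by $C \mapsto u_C$ and call $u_C$ the \emph{creation vertex} of $C$. The one structural fact I will use below is elementary: if $v \in C$ and $v \neq u_C$, then $v \in K_{u_C}$, so $u_C v \in E(G)$.

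Define $\vec{H}$ on $V(G)$ by placing an arc $\vec{vu}$ whenever there is some $C \in S$ with $u_C = u$ and $v \in C \setminus \{u\}$, and set $U := \{u_C \mid C \in S\} \subseteq V(G)$. By the previous paragraph $\vec{H}$ is a directed subgraph of $G$ (viewed as bidirected). Now apply \cref{prop:treewidth-fine} to $G$ with $\ell = m = 1$, taking $\vec{H}_1 := \vec{H}$ and $M_1 := U$; this yields a coloring $\psi \colon V(G) \to [f_1(k,1,1)]$ satisfying (T\ref{enum-tw:proper})--(T\ref{enum-tw:marks-odd}). Set $g_1(k) := f_1(k,1,1)$ and define $\sigma(C) := \psi(u_C)$ for every $C \in S$.

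For (T'\ref{enum-clique-coloring:vertex_in_odd_number_of_cliques}), fix $v \in V(G)$ and a color $c$, and split $\{C \in S : v \in C\}$ into (i) the clique $C_v$, which appears iff $v \in U$ and then contributes $\sigma$-color $\psi(v)$, and (ii) the cliques with $v \neq u_C$, which under $C \mapsto u_C$ are in bijection with $N^+(\vec{H}, v)$ and contribute $\sigma$-colors $\{\psi(u) : u \in N^+(\vec{H}, v)\}$. Hence the number of cliques in $S$ containing $v$ that receive $\sigma$-color $c$ equals
\[
    \mathbf{1}[v \in U,\ c = \psi(v)] \;+\; |N^+(\vec{H}, v) \cap \psi^{-1}(c)|.
\]
Similarly $|\sigma^{-1}(c)| = |U \cap \psi^{-1}(c)|$, so (T'\ref{enum-clique-coloring:odd_sized_color_classes}) is immediate from strong-oddness of $\psi$ on $U$. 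The one subtle point — and the only place properness of $\psi$ enters — is preventing a destructive parity collision between the two summands above: when $c = \psi(v)$, every $u \in N^+(\vec{H}, v) \subseteq N_G(v)$ satisfies $\psi(u) \neq \psi(v)$ by properness, so the second summand vanishes and the total is $0$ or $1$; when $c \neq \psi(v)$ the first summand vanishes and the second is already zero or odd by strong-oddness of $\psi$ on $\vec{H}$.
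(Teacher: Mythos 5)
Your proposal is correct and follows essentially the same route as the paper: the same directed graph (arcs from the other clique members to the creation/representing vertex), the same marked set of representing vertices, a single application of \cref{prop:treewidth-fine} with $\ell=m=1$, the induced coloring $\sigma(C)=\psi(u_C)$, and properness of $\psi$ to separate the clique represented by $v$ itself from those represented by its out-neighbors. The only cosmetic difference is that you make the parity bookkeeping explicit via the indicator-plus-count identity, which the paper argues in prose.
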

\begin{proof}
    Consider the construction sequence of the $k$-tree~$G$.
    In each step, a new vertex~$v$ is attached to all vertices of an existing $k$-clique~$C$, thereby forming a new $(k+1)$-clique~$Q_v$ of~$G$.
    We say that the vertices in~$C$ are the \emph{parents} of~$v$ and that~$v$ is a \emph{child} of every vertex in~$C$.
    The vertex~$v$ \emph{represents} the $(k+1)$-clique~$Q_v$.
    Note in particular that each $(k+1)$-clique is represented by exactly one vertex and no vertex represents several cliques.

    Given a vertex-coloring~$\psi$ of~$G$, we obtain the \emph{induced coloring}~$\sigma$ of the $(k+1)$-cliques in~$S$ by assigning each clique~$Q_v$ the color~$\psi(v)$ of the vertex~$v$ that represents it.
    In what follows, we will construct a directed subgraph~$\vec{H}$ of~$G$ and a subset~$V_S$ of the vertices~$V(G)$ such that every proper coloring~$\psi$ of~$G$ that restricts to a strong odd coloring on the directed graph~$\vec{H}$ and is strong odd on the set~$V_S$ induces a coloring~$\sigma$ of~$S$ that satisfies~(T'\ref{enum-clique-coloring:vertex_in_odd_number_of_cliques})-(T'\ref{enum-clique-coloring:odd_sized_color_classes}), see \cref{fig:tw-clique-coloring} for an example.

    \begin{figure}
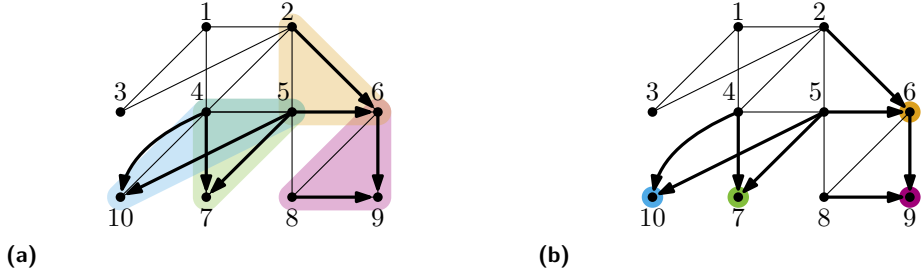

    \centering
    \begin{subfigure}[t]{0.5\textwidth}
        \centering
        \includegraphics[page=2]{example-tw-clique-covering.pdf}
        \caption{}
        \label{fig:example-tw-cliques}
    \end{subfigure}\hfill
    \begin{subfigure}[t]{0.5\textwidth}
        \centering
        \includegraphics[page=3]{example-tw-clique-covering.pdf}
        \caption{}
        \label{fig:example-tw-cliques-directed-graph}
    \end{subfigure}
    \caption{(\subref{fig:example-tw-cliques}) A~$2$-tree~$G$. The labels of the vertices correspond to their insertion order in the construction sequence of~$G$. The cliques of~$S$ are colored. (\subref{fig:example-tw-cliques-directed-graph}) The derived directed subgraph~$\vec{H}$ of $G$ is represented with thick lines. The colored vertices correspond to~$V_S$.}
    \label{fig:tw-clique-coloring}
    \end{figure}

    \begin{claim}
        There exists a directed subgraph~$\vec{H}$ of~$G$ such that if~$\psi$ is proper on~$G$ and strong odd on~$\vec{H}$, then    
        the induced coloring~$\sigma$ of~$S$ satisfies (T'\ref{enum-clique-coloring:vertex_in_odd_number_of_cliques}).
    \end{claim}
    \begin{claimproof}
    Consider the directed graph~$\vec{H}$ on the vertices of~$G$ where every vertex~$p \in V(G)$ is connected with a directed edge to each of its children~$v \in V(G)$ representing a clique in~$S$, i.e.,
    $
    E(\vec{H}) = \set{\vec{pv} \given \text{$p$ is a parent of~$v$ in~$G$ and $Q_v \in S$}}.
    $

    Suppose~$\psi$ is proper on~$G$ and strong odd on the directed graph~$\vec{H}$. 
    Let $p \in V(G)$ and let~$S(p) = \set{Q \in S \given p \in Q}$ denote the set of $(k+1)$-cliques of~$S$ containing~$p$.
    We need to show that $\sigma$ is strong odd on the set~$S(p)$, i.e., for every color~$c$ the cardinality~$\abs{S(p) \cap \sigma^{-1}(c)}$ is either odd or zero.
    The vertex~$p$ might represent a $(k+1)$-clique which we then denote by~$Q_p$.
    Observe that every $(k+1)$-clique~$Q_v$ in~$S(p)$, apart from the potential clique~$Q_p$, is represented by a child~$v \in N^+(\vec{H},p)$ of~$p$.
    Thus, as $\psi$ is strong odd on~$N^+(\vec{H},p)$, $\sigma$ is strong odd on $S(p)-Q_p$.
    As the coloring~$\psi$ is proper, no child of~$p$ is colored in~$\psi(p)$.
    Therefore, the clique~$Q_p$ (if it exists) has a different color from all other cliques in~$S(p)$ and~$\sigma$ is strong odd on the set~$S(p)$.
    \end{claimproof}

    \begin{claim}
        There exists a set~$V_S \subseteq V(G)$ such that  if~$\psi$ is strong odd on~$V_S$, then the induced coloring~$\sigma$ of~$S$ satisfies~(T'\ref{enum-clique-coloring:odd_sized_color_classes}).
    \end{claim}
    \begin{claimproof}
    Let~$V_S \subseteq V(G)$ be the set of vertices of~$G$ representing $(k+1)$-cliques in~$S$.
    Recall that there is a one-to-one-correspondence between cliques in~$S$ and vertices in~$V_S$.
    We therefore obtain for every color~$c$
    \[
        \abs{S \cap \sigma^{-1}(c)} = \abs{V_S \cap \psi^{-1}(c)}.
    \]
    Thus, if the vertex-coloring~$\psi$ is strong odd on the set~$V_S$, then the induced coloring~$\sigma$ of~$S$ satisfies~(T'\ref{enum-clique-coloring:odd_sized_color_classes}).
    \end{claimproof}
    
    If \cref{prop:treewidth-fine} holds for every $k$-tree, then there exists a vertex-coloring~$\psi$ with $g_1(k) = f_1(k,1,1)$ colors that is proper on~$G$, and strong odd on the directed graph~$\vec{H}$ and the set~$V_S$. 
    By the above, the induced coloring~$\sigma$ has the desired properties.
\end{proof}

\begin{proof}[Proof of \cref{prop:treewidth-fine}]
    We proceed by induction on the treewidth of~$G$ which we denote by~$k$. 
    If~$k=0$, the graphs~$G$ and~$\vec{H}_1,\ldots,\vec{H}_\ell$ contain no edges.
    In particular, any vertex-coloring is proper on~$G$ and strong odd for each $\vec{H}_i$, i.e., fulfills (T\ref{enum-tw:proper}) and (T\ref{enum-tw:strong-odd}).
    It therefore suffices to construct a coloring of~$G$ that satisfies~(T\ref{enum-tw:marks-odd}).

    Let~$M = \bigcup_{j \in [m]} M_j$ be the vertices of~$G$ that are contained in some set~$M_j$.
    For every subset $J$ of $[m]$, we define $V_J = \bigcap_{j \in J} M_j - \bigcup_{j \in [m] - J} M_j$, see \cref{fig:tw-zero-partition}.
    \begin{figure}
        \centering
        \includegraphics[page=2]{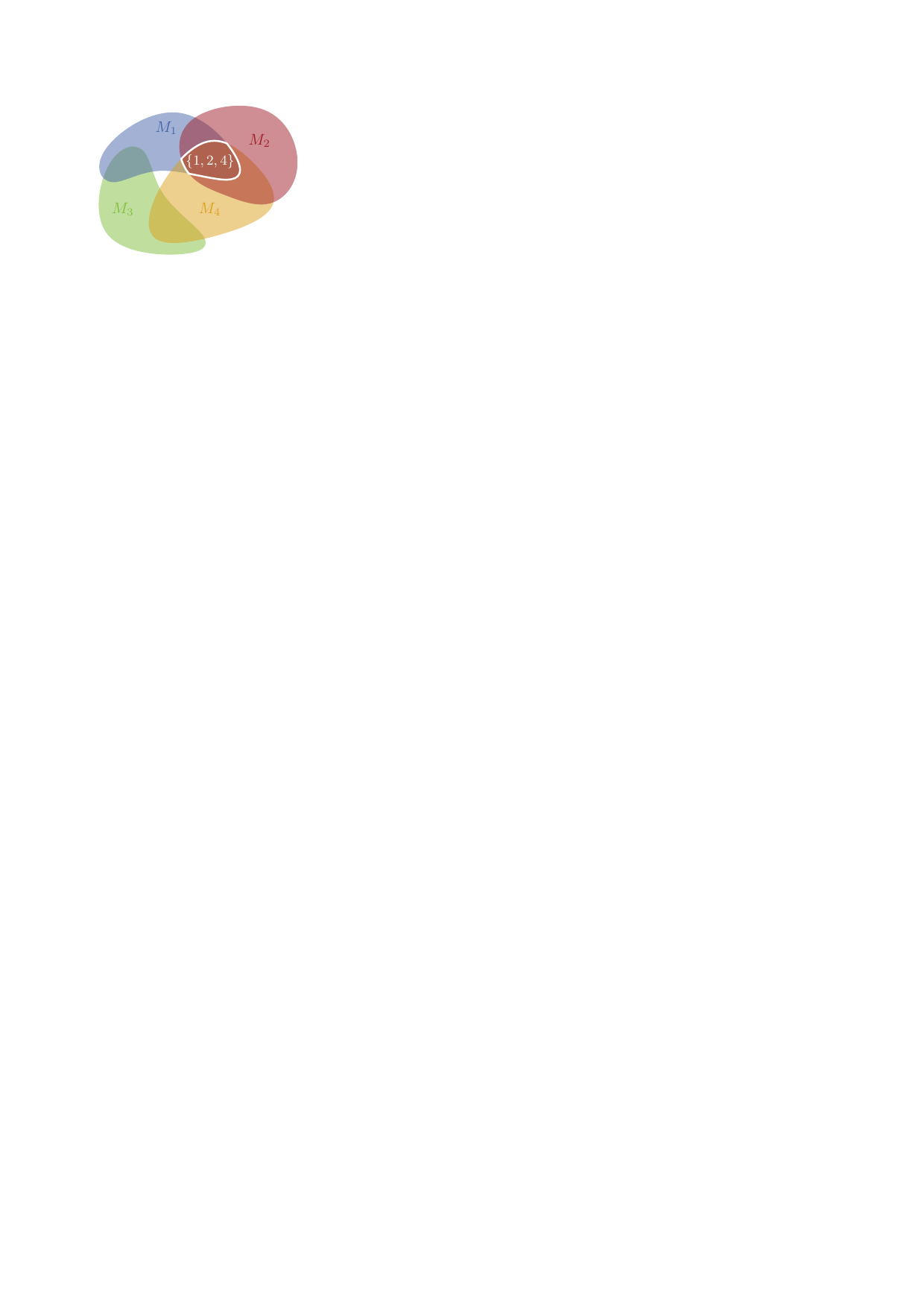}
        \caption{A visualization of a $0$-tree with four sets~$M_1, \dots, M_4$. The colored cells of the drawing correspond to the subsets~$J \subseteq [4]$. The border of the cell corresponding to~$\set{1,2,4}$ is highlighted in white.}
        \label{fig:tw-zero-partition}
    \end{figure}
    The sets~$V_J$ partition~$M$ into $2^m$~sets.
    Every set~$V_J$ is either completely contained in~$M_j$ or disjoint from~$M_j$ for every $j \in [m]$.
    We now construct a vertex-coloring of~$M$ that is strong odd on every set~$M_j$.
    For each set~$V_J$, we color all vertices in~$V_J$ with one color if~$\abs{V_J}$ is odd.
    If~$\abs{V_J}$ is even, all vertices in~$V_J$ except one get the same color.
    Using a different color palette on every set~$V_J$ yields a vertex-coloring~$\Psi$ with at most $f_1(0,\ell,m) \coloneqq 2^{m+1}$ colors that fulfills~(T\ref{enum-tw:marks-odd}).
    
    \smallskip
    
    Now let $k \geq 1$.
    Let~$L_1, \dots, L_{s}$ be a BFS-layering of the $k$-tree~$G$.
    For each layer~$L_d$ we construct a vertex-coloring~$\Phi_d$.
    The union of the vertex-colorings~$\Phi_d$ yields a coloring~$\Phi$ of~$G$.
    We will show that~$\Phi$ is proper and strong odd on the directed graphs~$\vec{H}_1, \dots, \vec{H}_{\ell}$.
    Yet, it might not be strong odd on the sets~$M_1, \dots, M_m$. 
    The desired coloring~$\Psi$ arises from~$\Phi$ by a small modification.
    
    The coloring~$\Phi_{d}$ is the product of four vertex-colorings~$\varphi_1,\varphi_2,\varphi_3$ and~$\varphi_4$ of~$L_{d}$. 
    The intuition is the following:
    Within a directed graph~$\vec{H}_i$, a vertex~$v \in L_d$ can have out-neighbors in three different layers, namely in~$L_{d-1},L_{d}$ and~$L_{d+1}$. 
    Let $L_t(v)$ denote the out-neighbors of~$v$ in layer~$L_t$ for~$t \in \set{d-1,d,d+1}$.
    The coloring~$\varphi_4$ will ensure that all out-neighbors of~$v$ of the same color in~$\Phi$ belong to the same layer, i.e., the coloring~$\Phi$ restricts to a strong odd coloring on~$\vec{H}_i$ if it is strong odd on each of the sets~$L_t(v)$ for every vertex~$v$.
    From the coloring~$\varphi_1$ we will conclude that the coloring~$\Phi$ is strong odd on~$L_d(v)$ and on the out-neighbors~$L_{d+1}(v) \cap C$ within~$L_{d+1}$ that belong to one component~$C$ of~$G[L_{d+1}]$. 
    The role of~$\varphi_2$ and~$\varphi_3$ is to ensure that the coloring~$\Phi$ is strong odd on~$L_{d+1}(v)$, which is the union of the sets $L_{d+1}(v) \cap C$ for all components~$C$ of the $(d+1)$-th layer.
    From the fact that~$\Phi$ is proper, we will conclude that the coloring is strong odd on the set~$L_{d-1}(v)$.

    We now construct the coloring~$\Phi_{d}$ of the $d$-th layer for~$d \geq 2$.
    Recall that each layer has treewidth at most~$k-1$ by Property~(B\ref{property:layer_tw_k-1}). 
    Thus, there exists a proper $k$-coloring~$\chi_{d-1}$ of the $(d-1)$-th layer with colors in~$[k]$.
    The vertex-coloring~$\chi_{d-1}$ will only be used for the construction of~$\Phi_d$; the coloring~$\Phi_{d-1}$ of the $(d-1)$-th level will not be altered.

    \proofsubparagraph{Construction of~$\bm{\varphi_1}$.}
    For every component~$C$ of~$G[L_{d}]$, the set of vertices in~$L_{d-1}$ with a neighbor in~$C$ forms a $k$-clique~$Q$ of the $(d-1)$-th layer by Property~(B\ref{property:parent_k-clique}).
    We call~$Q$ the \emph{parent-clique} of the vertices in the component~$C$ and say that these vertices are \emph{children} of~$Q$.
    Note in particular that~$Q$ may have children in several components of~$G[L_{d}]$, while every vertex of~$L_d$ has exactly one parent-clique.
    Let~$G_Q$ denote the graph induced by the children of~$Q$, see \cref{fig:tw_G_Q}.
    \begin{figure}
        \centering
        \includegraphics[page=2]{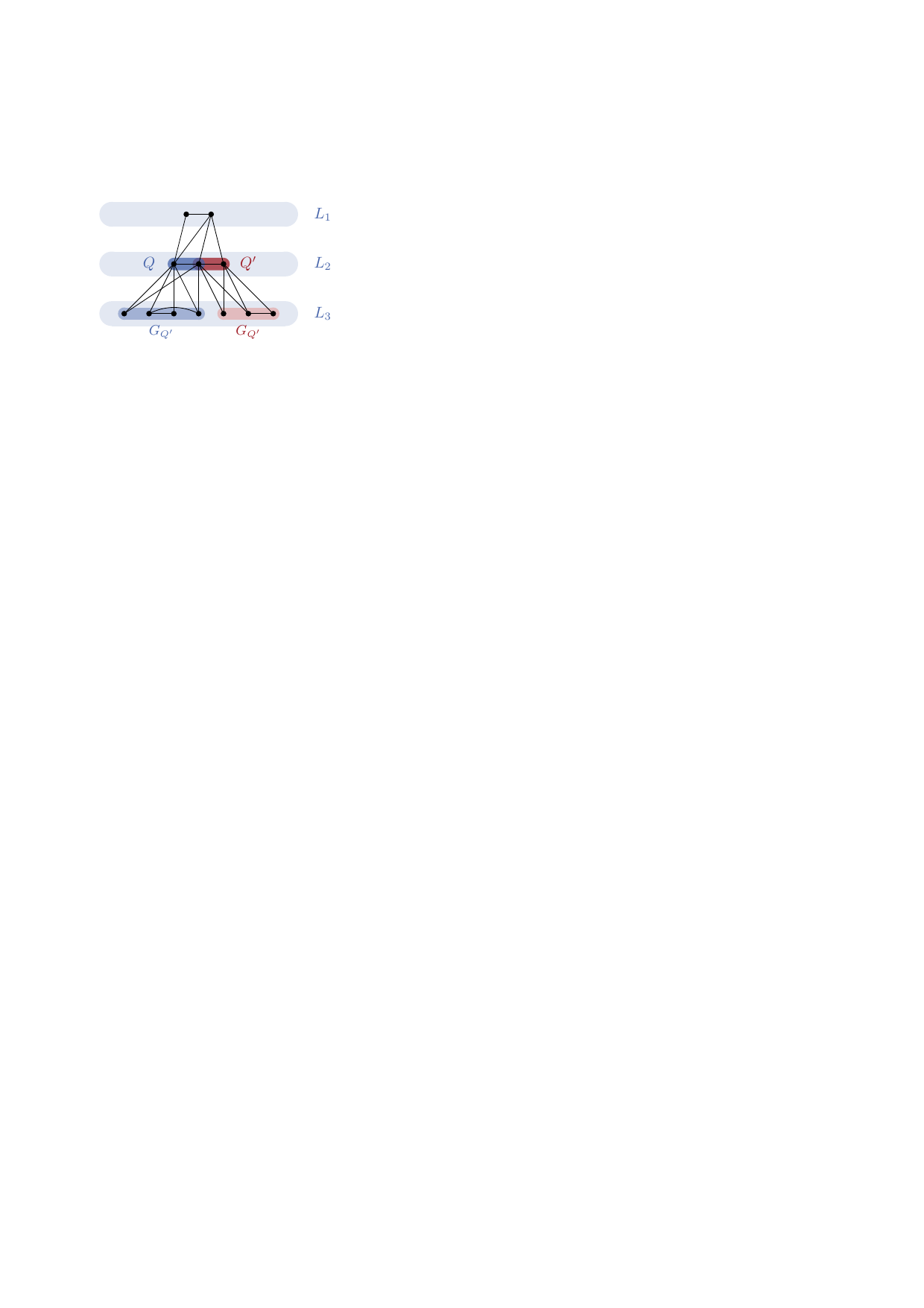}
        \caption{A $2$-tree. Two cliques~$Q,Q'$ in~$L_2$ are represented in blue and red. Their graphs~$G_{Q}$ and~$G_{Q'}$ in~$L_3$ are colored blue and red, respectively.}
        \label{fig:tw_G_Q}
    \end{figure}
    For every $k$-clique~$Q$ in the $(d-1)$-th layer, we will define a coloring~$\varphi_Q$ of $G_Q$.
    For a vertex $v \in L_d$ with parent-clique~$Q$, the color~$\varphi_1(v)$ corresponds to its color in~$\varphi_Q$, i.e., we will set $\varphi_1(v) \coloneqq \varphi_Q(v)$.
    The coloring~$\varphi_Q$ of~$G_Q$ is obtained as follows:
    
    For every~$i \in [\ell]$, let~$\vec{H}_{i,Q} \coloneqq \vec{H}_i \cap G_Q$ be the restriction of~$\vec{H}_i$ to the graph~$G_Q$ and for every~$j \in [m]$ we denote by~$M_{j,Q} \coloneqq M_j \cap V(G_Q)$ the subset of~$M_j$ restricted to the vertex-set of~$G_Q$.
    As~$Q$ is a $k$-clique in the $(d-1)$-th layer and~$\chi_{d-1}$ is a proper $k$-coloring, each of the $k$~vertices in~$Q$ has a different color in~$[k]$. 
    We denote by~$v_h$ the vertex of~$Q$ with $\chi_{d-1}(v_h) = h$ for every $h \in [k]$.
    For each $h \in [k]$ and $i \in [\ell]$, we let~$N_{i,h,Q} \coloneqq N^+(\vec{H_i},v_h) \cap V(G_Q)$ be the out-neighbors of~$v_h$ in~$\vec{H}_i$ that are in~$G_Q$. 
    By Property~(B\ref{property:layer_tw_k-1}), the graph~$G_Q$ has treewidth at most~$k-1$, as it is a subgraph of~$G[L_{d+1}]$.
    In particular, $G_Q$ is a subgraph of a $(k-1)$-tree~$G_Q'$.
    
    We obtain the coloring~$\varphi_Q$ of $G_Q$ as a subgraph of $G_Q'$ by induction on $G_Q'$. The coloring~$\varphi_Q$ has then the following properties:
    \begin{claim}
    \label{claim:properties_varphi_Q}
        The vertex-coloring~$\varphi_Q$ has the following properties:
        \begin{enumerate}[(a)]
            \item\label{claim_properties_varphi_Q_is_proper} $\varphi_Q$ is a proper vertex-coloring of~$G_Q$
            \item\label{claim_properties_varphi_Q_is_strong_odd_on_H_i} $\varphi_Q$ restricts to strong odd colorings on the directed graphs $\vec{H}_{1,Q}, \dots, \vec{H}_{\ell,Q}$
            \item\label{claim_properties_varphi_Q_is_strong_odd_on_M_i} $\varphi_Q$ is strong odd on the $m+\ell\cdot k$ subsets~$M_{1,Q}, \dots, M_{m,Q}$ and~$N_{i,h,Q}$ with $i \in [\ell], h \in [k]$
            
            \item\label{claim_properties_varphi_Q_num_colors} $\varphi_Q$ only uses colors in $[f_1(k-1,\ell,m+k\ell)]$.
        \end{enumerate}
    \end{claim}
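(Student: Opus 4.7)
The plan is to prove \cref{claim:properties_varphi_Q} by a direct invocation of the induction hypothesis of \cref{prop:treewidth-fine} applied to the $(k-1)$-tree $G_Q'$. The key observation enabling this is the sentence preceding the claim: $G_Q$ has treewidth at most $k-1$, so it sits as a subgraph of some $(k-1)$-tree $G_Q'$. Since $G_Q \subseteq G_Q'$, every directed subgraph $\vec{H}_{i,Q}$ of $G_Q$ is also a directed subgraph of $G_Q'$, and every set $M_{j,Q}$ or $N_{i,h,Q}$ is a subset of $V(G_Q')$.

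First I would apply the induction hypothesis, \cref{prop:treewidth-fine}, with the parameters $k-1$, $\ell$, and $m+k\ell$, feeding in the $\ell$ directed subgraphs $\vec{H}_{1,Q}, \ldots, \vec{H}_{\ell,Q}$ and the $m + k\ell$ vertex subsets consisting of $M_{1,Q}, \ldots, M_{m,Q}$ together with $N_{i,h,Q}$ for all $i \in [\ell]$ and $h \in [k]$. This yields a coloring $\varphi_Q' \colon V(G_Q') \to [f_1(k-1, \ell, m+k\ell)]$ satisfying (T\ref{enum-tw:proper})--(T\ref{enum-tw:marks-odd}) for this input: it is proper on $G_Q'$, restricts to a strong odd coloring on each $\vec{H}_{i,Q}$, and is strong odd on each of the $m+k\ell$ listed subsets.

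Then I would simply define $\varphi_Q$ as the restriction of $\varphi_Q'$ to $V(G_Q)$. Each of (a)--(d) follows directly: (a) because properness passes to subgraphs, (b) and (c) because the directed graphs $\vec{H}_{i,Q}$ and the subsets $M_{j,Q}$ and $N_{i,h,Q}$ already lie entirely inside $V(G_Q)$, so the strong-odd conditions are unaffected by the restriction, and (d) by the choice of parameter. The main bookkeeping point is justifying the count $m + k\ell$: there are $m$ original subsets $M_{j,Q}$, plus, for each of the $\ell$ directed graphs, exactly $k$ new subsets $N_{i,h,Q}$ indexed by the $k$ color classes of $\chi_{d-1}$ restricted to the $k$-clique $Q$. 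There is no substantial obstacle in the proof of the claim itself; the real work lies outside the claim, namely in later using the strong-oddness on the auxiliary sets $N_{i,h,Q}$ (combined with the colorings $\varphi_2, \varphi_3, \varphi_4$) to control the out-neighborhoods of vertices in $L_{d-1}$ that cross layers into $L_d$.
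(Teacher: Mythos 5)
Your proposal matches the paper's own argument: the claim is established exactly by invoking the induction hypothesis (\cref{prop:treewidth-fine} for $k-1$) on a $(k-1)$-tree $G_Q' \supseteq G_Q$ with the directed graphs $\vec{H}_{i,Q}$ and the $m+k\ell$ sets $M_{j,Q}$ and $N_{i,h,Q}$, then restricting the resulting coloring to $G_Q$. All four properties follow just as you describe, so the proposal is correct and essentially identical to the paper's proof.
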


    \proofsubparagraph{Construction of~$\bm{\varphi_2}$.}
    We assign a \emph{type} to each~$k$-clique~$Q$ of the $(d-1)$-th layer based on the coloring~$\varphi_Q$ defined above.
    The type is a $\mathsf{0}\,$-$\mathsf{1}$ matrix~$A_Q$ with $m+\ell\cdot k$ rows and $f_1(k-1,\ell,m+\ell k)$ columns defined as follows: 
    \begin{itemize}
        \item For $c \in [f_1(k-1,\ell,m+\ell k)]$, column~$c$ corresponds to color~$c$ in the coloring $\varphi_Q$ of $G_Q$.
            
        \item For $j \in [m]$, row~$j$ corresponds to the set~$M_{j,Q} = M_j \cap V(G_Q)$.
        The matrix~$A_Q$ has a $\mathsf{1}$ in row $j$ and column $c$ if and only if at least one vertex in~$M_{j,Q}$ in~$G_Q$ has color~$c$ in~$\varphi_Q$.
            
        \item For $h \in [k]$, let~$v_h$ be the vertex of~$Q$ with $\chi_{d-1}(v_h) = h$. 
        For $i \in [\ell]$ and $h \in [k]$, row~$m + (i-1)k + h$ corresponds to the vertex~$v_h$ in~$Q$ and the induced subgraph $\vec{H}_{i,Q}$ of $\vec{H}_i$ in $G_Q$.
        Here, $A_Q$ has a~$\mathsf{1}$ in row~$m + (i-1)k + h$ and column $c$ if and only if at least one neighbor~$u \in N_{i,h,Q} = N^+(\vec{H}_{i,Q},v_h) \cap V(G_Q)$ of~$v_h$ has color~$c$ in~$\varphi_Q$.
    \end{itemize}
    Types have a synchronizing effect in the following sense:
    
    \begin{claim}
        \label{claim:types}
        Let~$Q$ and~$Q'$ be two $k$-cliques of the $(d-1)$-th layer.
        If $Q$ and~$Q'$ are of the same type, then the following holds:
        \begin{enumerate}[(a)]
            \item\label{claim:types_k_cliques_of_same_type_M_j} For every~$j \in [m]$ and every color~$c$ of~$\varphi_Q$, we have $|M_{j,Q} \cap \varphi_Q^{-1}(c)|=0$ if and only if $|M_{j,Q'} \cap \varphi_{Q'}^{-1}(c)|=0$.
            \item\label{claim:types_k_cliques_of_same_type_N_i} If there is a vertex $v_h \in L_{d-1}$ with $\chi_{d-1}(v_h) = h$ that is part of~$Q$ and~$Q'$, then for every color~$c$ of~$\varphi$, we have $\abs{N_{i,h,Q} \cap \varphi_Q^{-1}(c)} =0$ if and only if $|N_{i,h,Q'} \cap \varphi_{Q'}^{-1}(c)|=0$.
        \end{enumerate}
    \end{claim}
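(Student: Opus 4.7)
The plan is to observe that the claim is essentially an immediate unpacking of the definition of the type matrix $A_Q$, so the proof will amount to carefully matching up rows and columns of $A_Q$ with $A_{Q'}$ under the hypothesis that these matrices are equal.

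First I would fix $Q$ and $Q'$ with $A_Q = A_{Q'}$ and look at part~\eqref{claim:types_k_cliques_of_same_type_M_j}. By construction, for any $j \in [m]$ and any color $c \in [f_1(k-1,\ell,m+\ell k)]$, the entry of $A_Q$ in row~$j$ and column~$c$ is $\mathsf{1}$ precisely when some vertex of $M_{j,Q}$ receives color~$c$ under $\varphi_Q$, i.e.\ exactly when $|M_{j,Q} \cap \varphi_Q^{-1}(c)| \neq 0$. The same statement holds verbatim for $Q'$ and $\varphi_{Q'}$. Since the two $(j,c)$-entries coincide, $|M_{j,Q} \cap \varphi_Q^{-1}(c)| = 0$ if and only if $|M_{j,Q'} \cap \varphi_{Q'}^{-1}(c)| = 0$, proving \eqref{claim:types_k_cliques_of_same_type_M_j}.

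For part~\eqref{claim:types_k_cliques_of_same_type_N_i}, I would use that the indexing of the last $\ell\cdot k$ rows of $A_Q$ is done not by an internal labeling of $Q$ but by the fixed proper coloring $\chi_{d-1}$ of $L_{d-1}$: the row $m + (i-1)k + h$ corresponds to the unique vertex of $Q$ colored $h$ by $\chi_{d-1}$, together with the directed subgraph $\vec{H}_{i,Q}$. Thus if a vertex $v_h \in L_{d-1}$ with $\chi_{d-1}(v_h) = h$ belongs to both $Q$ and $Q'$, then row $m+(i-1)k + h$ in $A_Q$ and in $A_{Q'}$ is associated with the \emph{same} vertex $v_h$ and its out-neighborhoods $N_{i,h,Q}$ and $N_{i,h,Q'}$ inside $G_Q$ and $G_{Q'}$, respectively. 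By construction, the entry in row $m+(i-1)k+h$ and column~$c$ equals~$\mathsf{1}$ exactly when $|N_{i,h,Q} \cap \varphi_Q^{-1}(c)| \neq 0$ (respectively $|N_{i,h,Q'} \cap \varphi_{Q'}^{-1}(c)| \neq 0$), and $A_Q = A_{Q'}$ forces the two to be simultaneously zero or simultaneously nonzero.

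There is no real obstacle here: the content is purely bookkeeping and the only thing to be careful about is that the row indexing of the last $\ell k$ rows uses the \emph{global} coloring $\chi_{d-1}$ of $L_{d-1}$, which is exactly what makes rows of $A_Q$ and $A_{Q'}$ canonically comparable on the shared vertices of $Q$ and $Q'$. The purpose of the claim inside the larger argument is to allow us to later rename colors consistently across all cliques of a given type, so the synchronization stated in \eqref{claim:types_k_cliques_of_same_type_M_j} and \eqref{claim:types_k_cliques_of_same_type_N_i} is exactly what is needed for the subsequent construction of $\varphi_2$.
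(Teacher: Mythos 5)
Your proof is correct and matches the paper's treatment: the paper states this claim without a separate proof, regarding it as an immediate consequence of the definition of the type matrix (only a figure illustration is given), and your careful unpacking — including the key observation that the last $\ell k$ rows are indexed by the global coloring $\chi_{d-1}$, so a shared vertex $v_h$ of $Q$ and $Q'$ corresponds to the same row in $A_Q$ and $A_{Q'}$ — is exactly the intended argument.
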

    See \cref{fig:tw_types} for an illustration.
    
    \begin{figure}
        \savebox{\imagebox}{\includegraphics[page=3]{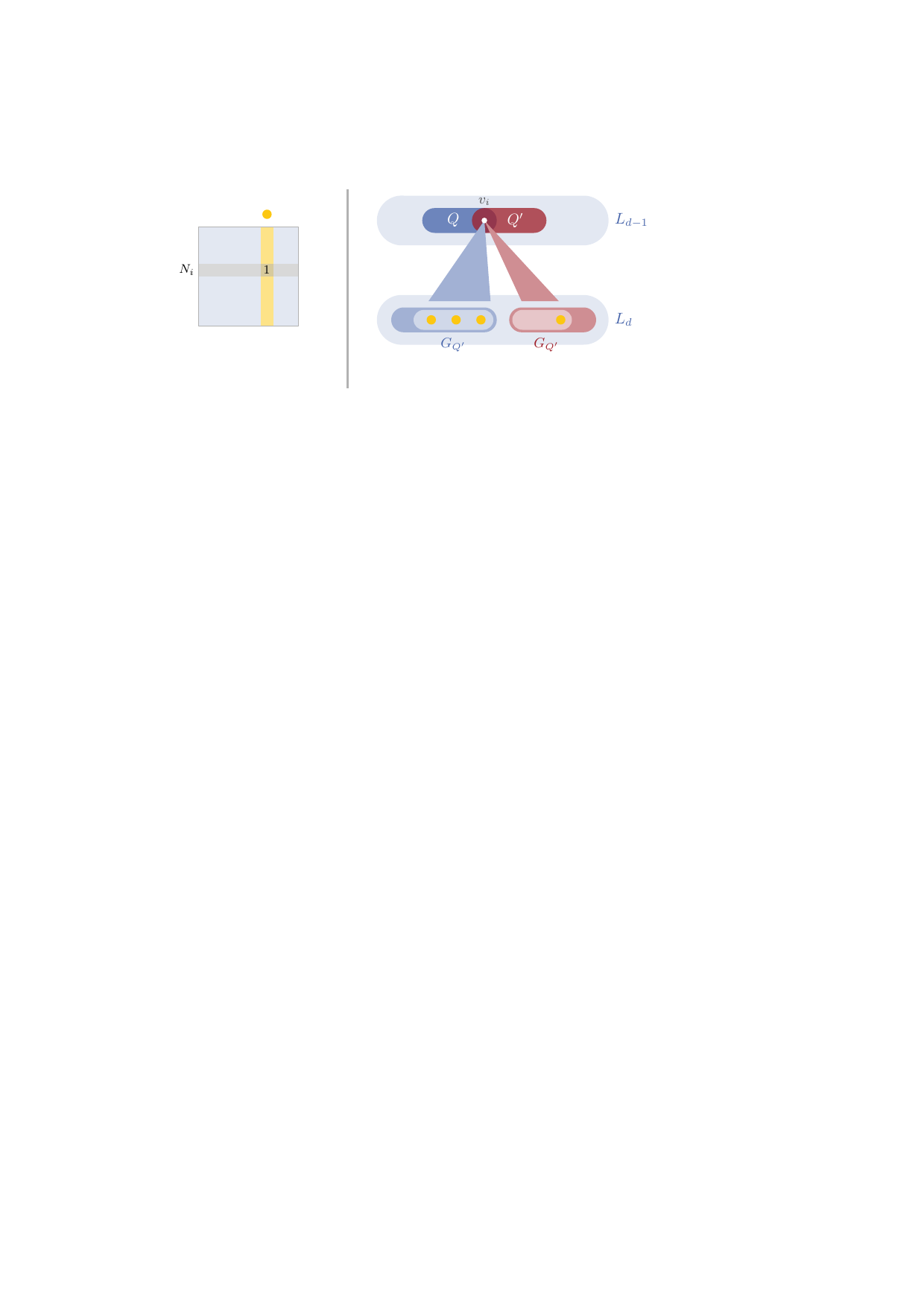}}
        \centering
        \begin{subfigure}[b]{0.4\textwidth}
        \centering\raisebox{\dimexpr.5\ht\imagebox-.5\height}{\includegraphics[page=2]{tw_types.pdf}}
        \caption{}
        \label{fig:tw_types_matrix}
        \end{subfigure}\hfill
        \begin{subfigure}[b]{0.6\textwidth}
        \centering\usebox{\imagebox}
        \label{fig:tw_types_interaction}
        \caption{}
        \end{subfigure}
        \caption{The cliques~$Q,Q'$ in~$L_{d-1}$ are of the same type~$A$ represented in (\subref{fig:tw_types_matrix}). Both contain the vertex~$v_i$. 
        The matrix~$A$ has a~$1$ in the position representing color~\textcolor{lipicsYellow}{$\bullet$} and the vertex~$v_i$. Thus, $v_i$ has a neighbor in \textcolor{lipicsYellow}{$\bullet$} both in~$G_Q$ and~$G_Q'$.}
        \label{fig:tw_types}
    \end{figure}

    \proofsubparagraph{Construction of~$\bm{\varphi_3}$.}
    For each of the above-defined types~$A$, we construct a coloring~$\sigma_A$ of the $k$-cliques of the $(d-1)$-th layer of type~$A$ with the following properties:
    \begin{itemize}
        \item For every vertex~$v \in L_{d-1}$, the coloring~$\sigma_A$ is strong odd on the set of $k$-cliques of type~$A$ which contain~$v$.
        \item Every color class has odd size.
        \item The coloring uses at most~$g_1(k-1)$ colors (where~$g_1(k-1)$ only depends on~$k-1$).
    \end{itemize}
    Such a coloring exists by \cref{lem:treewidth-clique-coloring}.

    \proofsubparagraph{Definition of~$\bm{\Phi_d}$ and Properties of~$\bm{\Phi}$.}
    We now define the vertex-coloring~$\Phi_d$ of the $d$-th layer.

    First suppose that $d \geq 2$.
    Let~$v \in L_d$, let~$Q$ be its parent-clique in~$L_{d-1}$ and let~$A$ denote the type of~$Q$.
    We define $\Phi_d(v) = (\varphi_1(v),\varphi_2(v),\varphi_3(v),\varphi_4(v))$
    where
    \begin{itemize}
    \item $\varphi_1(v) = \varphi_Q(v)$ is the color of~$v$ under the partial coloring~$\varphi_Q$ of~$G[L_d]$,
    \item $\varphi_2(v) = A$ is the type of~$Q$,
    \item $\varphi_3(v) = \sigma_A(Q)$ is the color of the parent-clique~$Q$ in the coloring~$\sigma_A$ of all $k$-cliques of the~$(d-1)$-th layer of type~$A$, and
    \item $\varphi_4(v) = d \bmod 3$ is the index of the layer~$L_d$ taken modulo~$3$.
    \end{itemize}
    If~$d=1$, the vertices of~$L_d$ induce a $k$-clique by Property~(B\ref{property:root_k-clique}).
    Let~$\varphi_1$ be any proper $k$-coloring of~$L_1$.
    For a vertex~$v \in L_1$, we define~$\Phi_1(v) = (\varphi_1(v),-1,-1,1\bmod 3)$.
    Note in particular that no color used on~$L_1$ is used on any other layer.
    
    The coloring~$\Phi$ of~$G$ corresponds to the union of the colorings~$\Phi_d$ of the different layers, i.e., for every vertex~$v$ of the $d$-th layer, we have $\Phi(v) = \Phi_d(v)$.

    Note that the number of colors used by~$\Phi$ only depends on~$k,\ell$ and~$m$.
    This follows from the fact that the colorings~$\varphi_Q$ use at most $f_1(k-1,\ell, m+k\ell)$ colors by \cref{claim:properties_varphi_Q}\ref{claim_properties_varphi_Q_num_colors}, there are at most~$2^{(m+\ell k) f_1(k-1,\ell,m+\ell k)}$ different types of parent-cliques, the colorings~$\sigma_A$ use at most~$g_1(k-1)$ colors, and only~$k$ more colors are used for the first layer.

    We now observe that~$\Phi$ is proper, i.e., satisfies (T\ref{enum-tw:proper}).
    Consider two adjacent vertices~$u$ and~$v$.
    If~$u$ and~$v$ belong to different layers, their colors in~$\Phi$ differ due to the last entry~$\varphi_4$. 
    Otherwise, they belong to the same layer~$L_d$. 
    It suffices to observe that~$\varphi_1$ is a proper coloring of~$L_d$.
    This clearly holds for the first layer.
    For a different layer, $\varphi_1$ is the union of colorings~$\varphi_Q$ of the different components. 
    As every~$\varphi_Q$ is proper by~\cref{claim:properties_varphi_Q}\ref{claim_properties_varphi_Q_is_proper}, so is~$\varphi_1$.

    It remains to show that~$\Phi$ is strong odd on every directed graph~$\vec{H}_i$, i.e., satisfies (T\ref{enum-tw:strong-odd}). 
    Let $v \in V(\vec{H}_i)$ be a vertex and let~$S \coloneqq N^+(\vec{H}_i,v) \cap \Phi^{-1}(c)$ be the out-neighbors of~$v$ in color~$c$.
    Let $L_d$ be the layer the vertex~$v$ belongs to.
    We need to show that the cardinality of~$S$ is zero or odd.
    Due to the definition of the fourth entry of~$\Phi$, the set~$S$ is contained in one of the layers~$L_{d-1},L_d,L_{d+1}$ and we distinguish these three cases:
    
    \textbf{Case 1.} If~$S$ is contained in~$L_{d-1}$, then~$S$ is part of the parent-clique of~$v$, see \cref{fig:tw-neighbors_L_d-1}. 
    It follows that~$S$ has size one, as the coloring~$\Phi$ is proper.
    
    \textbf{Case 2.} Suppose~$S$ is contained in~$L_d$.
    If~$d=1$, then~$S$ induces a clique in~$G$. 
    It follows that~$S$ has size~$0$ or~$1$ as~$\Phi$ is a proper coloring.
    We may therefore assume that~$d\geq 2$.
    Observe that the vertices of~$S$ together with the vertex~$v$ are all contained in a single~$G_Q$ for some $k$-clique~$Q \subseteq L_{d-1}$, see \cref{fig:tw-neighbors_L_d}.
    Note in particular that $S = N^+(\vec{H}_i,v) \cap \Phi^{-1}(c) \cap V(G_Q)$.
    As the coloring~$\varphi_Q$ is strong odd on the restriction of~$\vec{H}_i$ to~$G_Q$ by \cref{claim:properties_varphi_Q}\ref{claim_properties_varphi_Q_is_strong_odd_on_H_i}, the size of~$S$ is zero or odd.

    \textbf{Case 3.} $S$ is contained in~$L_{d+1}$.
    Let~$A$ be the type that corresponds to the second entry $c_2$ of the color~$c$.
    We denote by~$Q_1, \dots, Q_t$ the $k$-cliques in~$G[L_d]$ that contain~$v$ and are of type~$A$.
    The vertex-set of~$S$ splits into~$t$ sets~$S_1, \dots, S_t$ where for each~$i\in[t]$ the vertices in~$S_i$ are children of the $k$-clique~$Q_i$, see \cref{fig:tw-neighbors_L_d+1}.
    
    \begin{figure}
        \centering
        \begin{subfigure}[b]{0.23\textwidth}
        \centering
        \includegraphics[page=2]{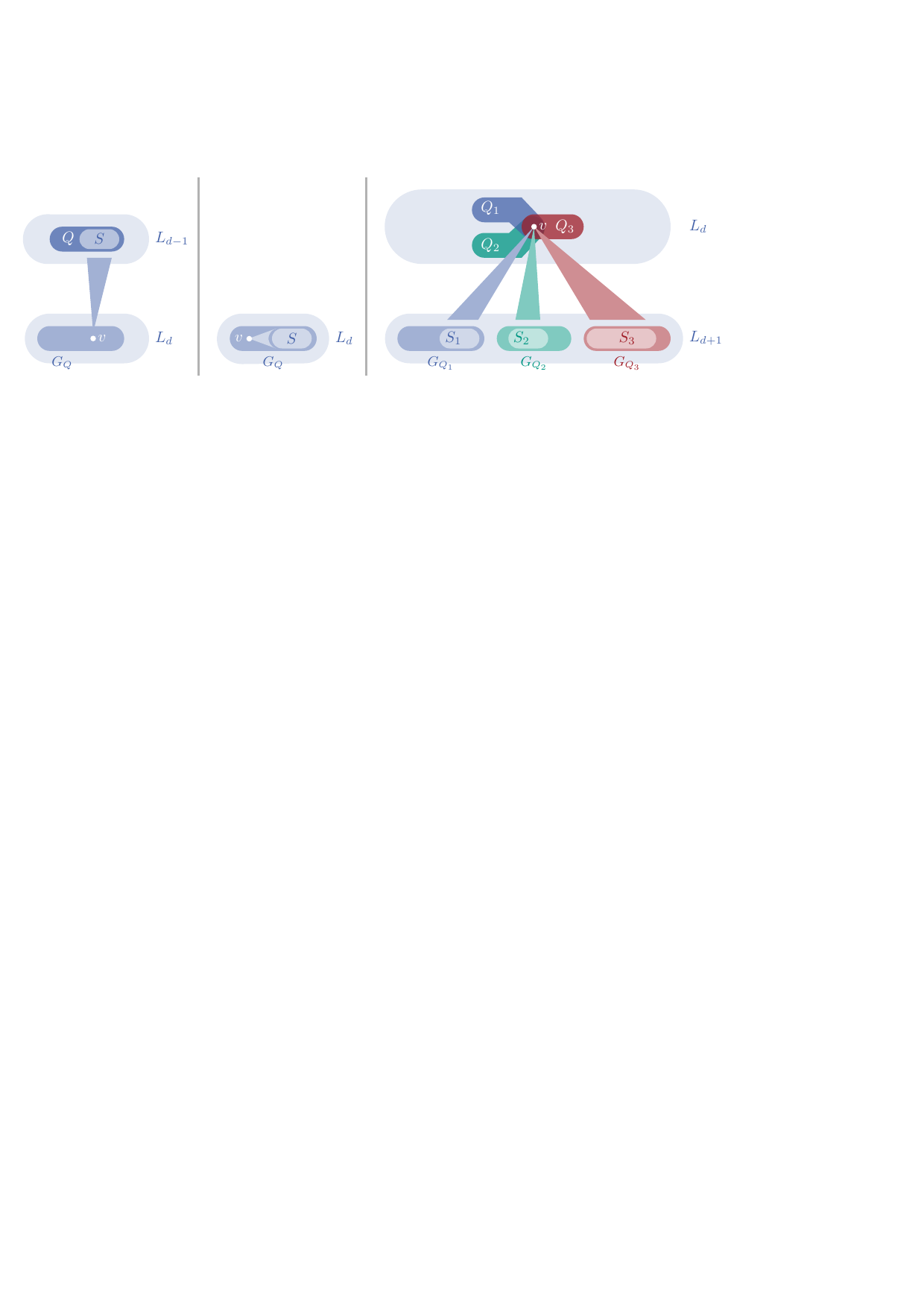}
        \caption{}
        \label{fig:tw-neighbors_L_d-1}
        \end{subfigure}\hfill
        \begin{subfigure}[b]{0.25\textwidth}
        \centering
        \includegraphics[page=3]{tw_neighbors.pdf}
        \caption{}
        \label{fig:tw-neighbors_L_d}
        \end{subfigure}\hfill
        \begin{subfigure}[b]{0.5\textwidth}
        \includegraphics[page=4]{tw_neighbors.pdf}
        \caption{}
        \label{fig:tw-neighbors_L_d+1}
        \end{subfigure}
        \caption{The out-neighbors~$S$ of a vertex~$v \in L_d$ of the same color in~$\Phi$. They are either all contained in~$L_{d-1}$ (\subref{fig:tw-neighbors_L_d-1}), in~$L_d$ (\subref{fig:tw-neighbors_L_d}) or in~$L_{d+1}$ (\subref{fig:tw-neighbors_L_d+1}).}
        \label{fig:tw-neighbors}
    \end{figure}
    
    Due to the definition of the colorings~$\varphi_{Q_i}$, see \cref{claim:properties_varphi_Q}\ref{claim_properties_varphi_Q_is_strong_odd_on_M_i}, the size of each~$S_i$ is zero or odd.
    As for all $i\in[t]$ the $k$-clique~$Q_i$ is of the same type and contains the vertex~$v$, it follows from \cref{claim:types}\ref{claim:types_k_cliques_of_same_type_N_i} that either the size of every set~$S_i$ is zero, or the size of every set~$S_i$ is odd.
    In order to show that~$S$ is empty or has odd size, it thus suffices to prove that the number of sets~$S_1, \dots, S_t$ is zero or odd, i.e. $t=0$ or $t$ is odd.
    Recall that the third entry~$c_3$ of color~$c$ is derived from the coloring~$\sigma_A$ of the~$k$-cliques of the $j$-th layer of type~$A$.
    In particular, we obtain $\sigma_A(Q_i) = c_3$ for all $k$-cliques~$Q_i$.
    By definition of~$\sigma_A$, the number of $k$-cliques~$Q_i$ of type~$A$ that contain~$v$ is either zero or odd.
    Thus $t=0$ or $t$ is odd.

    Therefore,~$\Phi$ is a proper coloring of~$G$ that is strong odd on every directed graph~$\vec{H}_i$, i.e. satisfies (T\ref{enum-tw:proper}) and (T\ref{enum-tw:strong-odd}).

    \proofsubparagraph{Construction and Properties of~$\bm{\Psi}$.}
    Now it is time for the final modification of~$\Phi$ so that the resulting coloring~$\Psi$ still satisfies (T\ref{enum-tw:proper}) and (T\ref{enum-tw:strong-odd}), but furthermore (T\ref{enum-tw:marks-odd}).
    Hence, we need to make sure that among the vertices in~$M_j$ (for every $j \in [m]$) each color~$c$ appears either not at all, or an odd number of times. First, we establish two more properties of~$\Phi$.

    \begin{claim}
        \label{claim:intersection_M_j_with_layer_is_strong_odd}
        For every layer~$L_d$ and every set~$M_j$, the coloring~$\Phi$ is strong odd on the set~$L_d \cap M_j$.
    \end{claim}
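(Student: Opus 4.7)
In the base case $d = 1$, the layer $L_1$ is a $k$-clique and $\varphi_1$ is a proper $k$-coloring of it, so every color of $\Phi$ appears at most once on $L_1$ and the claim is immediate. For $d \geq 2$, I would fix a color $c = (c_1, c_2, c_3, c_4)$ that actually appears on at least one vertex of $L_d \cap M_j$ (otherwise there is nothing to show). Let $A$ be the type encoded by $c_2$ and let $\mathcal{Q}$ denote the family of $k$-cliques $Q$ of $L_{d-1}$ of type $A$ that satisfy $\sigma_A(Q) = c_3$. Since the sets $V(G_Q)$ partition $L_d$, and since $\Phi(v) = c$ for $v \in V(G_Q)$ forces both $Q \in \mathcal{Q}$ and $\varphi_Q(v) = c_1$, I obtain the disjoint decomposition
\[
    L_d \cap M_j \cap \Phi^{-1}(c) \;=\; \bigcup_{Q \in \mathcal{Q}} \bigl( M_{j,Q} \cap \varphi_Q^{-1}(c_1) \bigr).
\]

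The plan is then to chain together three properties already at our disposal. First, \cref{claim:properties_varphi_Q}\ref{claim_properties_varphi_Q_is_strong_odd_on_M_i} guarantees that every summand $|M_{j,Q} \cap \varphi_Q^{-1}(c_1)|$ is either zero or odd. Second, since all $Q \in \mathcal{Q}$ share the type $A$, \cref{claim:types}\ref{claim:types_k_cliques_of_same_type_M_j} synchronises these summands: they are simultaneously zero or simultaneously nonzero across $\mathcal{Q}$. The assumption that $c$ already appears on $L_d \cap M_j$ rules out the former, so every summand is odd. Third, the coloring $\sigma_A$ produced via \cref{lem:treewidth-clique-coloring} has only odd-sized color classes, hence $|\mathcal{Q}| = |\sigma_A^{-1}(c_3)|$ is odd.

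Combining these, $|L_d \cap M_j \cap \Phi^{-1}(c)|$ is a sum of an odd number of odd positive integers, hence odd, which is exactly what is required. I do not expect a genuine obstacle here: the coloring $\Phi$ was engineered so that its three informative coordinates $\varphi_1, \varphi_2, \varphi_3$ feed directly into the three synchronising mechanisms above. The only minor care point is noting that a clique $Q$ with no children has the all-zero type matrix, so every $Q \in \mathcal{Q}$ actually has nonempty $G_Q$, since the type $A$ has a nonzero entry in row $j$, column $c_1$ witnessed by the vertex of color $c$ in $L_d \cap M_j$.
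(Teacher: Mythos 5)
Your proposal is correct and follows essentially the same route as the paper's proof: partition $L_d \cap M_j \cap \Phi^{-1}(c)$ by parent-cliques, get per-clique oddness from \cref{claim:properties_varphi_Q}\ref{claim_properties_varphi_Q_is_strong_odd_on_M_i}, synchronize emptiness across cliques of the common type via \cref{claim:types}\ref{claim:types_k_cliques_of_same_type_M_j}, and use that the relevant cliques form a color class of $\sigma_A$, which has odd size. Your explicit identification of the relevant family as $\sigma_A^{-1}(c_3)$ among type-$A$ cliques (together with the remark ruling out childless cliques) is, if anything, a slightly cleaner packaging of the same argument.
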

    \begin{claimproof}
        If $d = 1$ the claim holds since the first layer is a clique by Property~(B\ref{property:root_k-clique}) and~$\Phi$ is proper.
        Now suppose $d \geq 2$ and fix some color~$c = (c_1,c_2,c_3,c_4)$ of~$\Phi$.
        We need to show that the size of~$S \coloneqq L_d \cap M_j \cap \Phi^{-1}(c)$ is zero or odd.
        Let~$Q_1,\dots,Q_t$ be $k$-cliques of the $(d-1)$-th layer such that every vertex in~$M_j$ is a child of some~$Q_i$ and each~$Q_i$ has a child in~$M_j$. 
        For every~$i \in [t]$, let~$S_i$ denote the children of~$Q_i$ in~$S$, see \cref{fig:tw_Phi_strong_odd_on_M_j_cap_L}.
        Note that the sets~$S_i$ form a partition of~$S$.
        \begin{figure}
        \centering
            \includegraphics[page=1]{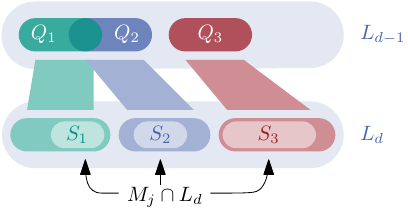}
            \caption{A visualization of the proof of \cref{claim:intersection_M_j_with_layer_is_strong_odd}. Here, each vertex of~$M_j \cap L_d$ is a child of one of the cliques~$Q_1,Q_2,Q_3$. The graphs~$G_{Q_i}$ are represented in the same color as their respective parent-cliques.}
            \label{fig:tw_Phi_strong_odd_on_M_j_cap_L}
        \end{figure}

        Recall that each of the colorings~$\varphi_{Q_i}$ is strong odd on the set~$M_{j,Q_i} = M_j \cap V(G_{Q_i})$ by \cref{claim:types}\ref{claim:types_k_cliques_of_same_type_M_j}.
        Thus, due to the first entry $c_1$ of~$c$, the size of each of the sets~$S_i = M_{j,Q_i} \cap \Phi^{-1}(c)$ is zero or odd.
        
        Due to the second entry $c_2$ of~$c$, all of the $k$-cliques~$Q_i$ are of the same type~$A$. 
        It therefore follows from \cref{claim:types}\ref{claim:types_k_cliques_of_same_type_M_j} that either all sets~$S_i$ are empty or all these sets have odd size.

        As the cardinality of the set~$S$ is the sum of the sizes of the sets~$S_i$ with $i \in [t]$, it remains to show that $t = 0$ or $t$ is odd.
        By definition of~$\Phi$, we have $\sigma_A(Q_i) = c_3$ for every $i \in [t]$. 
        It follows from \cref{claim:types}\ref{claim:types_k_cliques_of_same_type_M_j} that each $k$-clique~$Q \subseteq L_{d-1}$ of color~$c_3$ in $\sigma_A$ corresponds to one of the cliques~$Q_1, \dots, Q_t$ as such a $k$-clique~$Q$ is of type~$A$.
        Thus, the cliques~$Q_1, \dots, Q_t$ form a color class of~$\sigma_A$.
        As every color class of~$\sigma_A$ has odd size, we see that $t=0$ or~$t$ is odd.
    \end{claimproof}

    \begin{claim}
        \label{claim:M_j_on_every_relevant_layer}
        Let~$c$ be a color of~$\Phi$ and~$L_d$ a layer.
        If~$L_d \cap \Phi^{-1}(c) \neq \varnothing$ and $M_j \cap \Phi^{-1}(c) \neq \varnothing$, then $M_j \cap L_d \cap \Phi^{-1}(c) \neq \varnothing$, see \cref{fig:tw_intersection_color_M_j} for an illustration.
    \end{claim}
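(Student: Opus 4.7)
The plan is to exploit how tightly the coloring $\Phi$ encodes parent-clique structure: given a color $c = (c_1, c_2, c_3, c_4)$ of $\Phi$, the second and third coordinates determine a parent-clique type $A = c_2$ and a $\sigma_A$-color $c_3$, while the fourth coordinate pins down the layer modulo $3$. Because the type entry $c_2$ is either a $\mathsf{0}$-$\mathsf{1}$ matrix (for vertices in layers $L_{d''}$ with $d'' \geq 2$) or the sentinel value $-1$ (for vertices of $L_1$), these two cases are disjoint; I would first dispose of $d = 1$ by observing that $\Phi^{-1}(c) \subseteq L_1$ whenever $L_1 \cap \Phi^{-1}(c) \neq \varnothing$, so the hypothesis $M_j \cap \Phi^{-1}(c) \neq \varnothing$ immediately gives a vertex in $L_1 \cap M_j \cap \Phi^{-1}(c)$.

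For $d \geq 2$, I would fix a witness $u \in L_d \cap \Phi^{-1}(c)$, let $Q \subseteq L_{d-1}$ be its parent-clique, and record that $Q$ has type $A = c_2$, $\sigma_A(Q) = c_3$, and $\varphi_Q(u) = c_1$. Then I would pick any witness $w \in M_j \cap \Phi^{-1}(c)$; the base-case observation guarantees that $w$ lies on some layer $L_{d'}$ with $d' \geq 2$, and its parent-clique $Q'$ must likewise have type $A$ and $\sigma_A$-color $c_3$. Since $w \in M_{j,Q'}$ and $\varphi_{Q'}(w) = c_1$, the set $M_{j,Q'} \cap \varphi_{Q'}^{-1}(c_1)$ is nonempty.

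The key step is then a direct application of \cref{claim:types}\ref{claim:types_k_cliques_of_same_type_M_j}: because $Q$ and $Q'$ share the type $A$, nonemptiness of $M_{j,Q'} \cap \varphi_{Q'}^{-1}(c_1)$ forces nonemptiness of $M_{j,Q} \cap \varphi_Q^{-1}(c_1)$. A vertex $u'$ witnessing this lies in $V(G_Q) \subseteq L_d$ and in $M_j$, and its parent-clique is $Q$, so a coordinate-by-coordinate check yields $\Phi(u') = (c_1, A, c_3, d \bmod 3) = c$, placing $u'$ in $L_d \cap M_j \cap \Phi^{-1}(c)$, as desired.

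I expect no real obstacle beyond this routine unpacking: the second and third coordinates of $\Phi$ were designed precisely so that the presence of an $M_j$-vertex of a given local color in $G_Q$ depends only on the type $A$ of $Q$, and the transport lemma does all the work. The only subtle point is the base case $d = 1$, handled cleanly by the disjointness of the $L_1$ color palette from those of later layers.
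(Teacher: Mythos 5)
Your argument is correct and takes essentially the same route as the paper: both dispose of $d=1$ via the disjointness of the $L_1$ palette, then for $d \geq 2$ compare the parent-cliques of the two witnesses, which share the type $c_2$, and transport nonemptiness of $M_{j,\cdot} \cap \varphi_{\cdot}^{-1}(c_1)$ between them via \cref{claim:types}\ref{claim:types_k_cliques_of_same_type_M_j}. Your coordinate-by-coordinate check that the resulting child of the $L_d$-witness's parent-clique receives the full color $c$ merely spells out a step the paper leaves implicit.
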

    \begin{claimproof}
        If~$d=1$ the claim holds since no color of~$L_1$ is used on any other layer and all vertices of~$L_1$ have distinct colors.
        We may therefore assume that~$d\geq 2$.
        As~$M_j \cap \Phi^{-1}(c) \neq \varnothing$ and $L \cap \Phi^{-1}(c) \neq \varnothing$, there exist $k$-cliques~$Q$ and~$Q'$ of type~$A$ such that~$Q$ has a child in~$M_j$ of color~$c$ and all children of~$Q'$ lie in~$L$.
        As~$Q$ and~$Q'$ are of the same type, it follows from~\cref{claim:types}\ref{claim:types_k_cliques_of_same_type_M_j} that~$Q'$ has a child in $M_j$ of color~$c$.
        Yet, all children of~$Q'$ lie in~$L$ and therefore $M_j \cap L \cap \Phi^{-1}(c) \neq \varnothing$.
    \end{claimproof}
    \begin{figure}
            \centering
            \includegraphics[page=2]{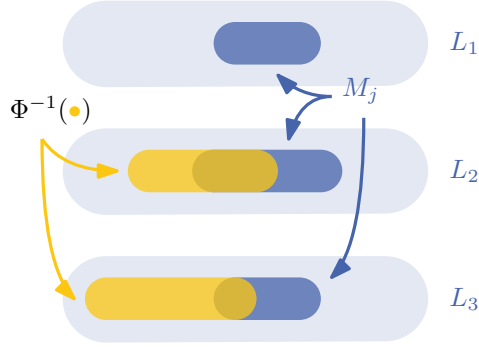}
            \caption{A visualization of the statement of \cref{claim:M_j_on_every_relevant_layer}. If a vertex of~$M_j$ is colored in~\textcolor{lipicsYellow}{$\bullet$} in the coloring~$\Phi$, then every layer on which color~\textcolor{lipicsYellow}{$\bullet$} appears, contains a vertex of~$M_j$ colored in~\textcolor{lipicsYellow}{$\bullet$}.}
            \label{fig:tw_intersection_color_M_j}
    \end{figure}
    
    Now, the coloring~$\Psi$ is derived from~$\Phi$ by splitting some color classes.
    For every color~$c$ of~$\Phi$, let~$\ell_c$ be the number of layers that color~$c$ appears on.
    If~$\ell_c$ is odd or zero, then we transfer color~$c$ to~$\Psi$ unaltered.
    Otherwise, we rename color~$c$ on exactly one of the layers it appears on into~$c'$.
    Thus, every color class of~$\Psi$ appears on an odd number of layers and~$\Psi$ uses at most twice as many colors as~$\Phi$.   

    Note that the coloring~$\Psi$ satisfies (T\ref{enum-tw:proper}) and (T\ref{enum-tw:strong-odd}) as these properties hold for~$\Phi$.
    It remains to show that~$\Psi$ is strong odd on every set~$M_j$.
    Let~$c$ be a color of~$\Psi$ and consider a set~$M_j$.
    Recall that for every layer~$L$, the size of~$\Psi^{-1}(c) \cap M_j \cap L$ is zero or odd by \cref{claim:intersection_M_j_with_layer_is_strong_odd}.
    Thus, the size of $M_j \cap \Psi^{-1}(c)$ is zero or odd if color~$c$ appears on none or an odd number of layers that contain vertices of~$M_j$.
    By definition of~$\Psi$, color~$c$ appears on an odd number of layers.
    Together with \cref{claim:M_j_on_every_relevant_layer}, we conclude that the number of layers~$L$ such that $M_j \cap L \cap \Psi^{-1}(c) \neq \varnothing$ is odd or zero.
    As this holds for every color~$c$, the coloring~$\Psi$ is strong odd on the set~$M_j$.
    It follows that~$\Psi$ satisfies (T\ref{enum-tw:proper})-(T\ref{enum-tw:marks-odd}).
\end{proof}

\section{Strong Odd Colorings for Small Row-Treewidth}
\label{sec:row-treewidth}

The aim of this section is to bound the strong odd chromatic number in the row-treewidth.

\begin{theorem}\label{thm:row-treewidth}
    There exists a function~$f$ such that for every graph~$G$ we have \[\chiso(G) \leq f(\rtw(G)).\]
\end{theorem}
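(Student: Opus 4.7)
Let $G$ be a graph with $\rtw(G) \le k$, so by definition there is a graph $H$ with $\tw(H) \le k$ and a path $P = (p_1,\dots,p_n)$ such that $G \subseteq H \boxtimes P$. Embedding $H$ into an edge-maximal graph of the same treewidth, we may assume that $H$ is a $k$-tree. The product structure gives layers $L_1,\dots,L_n$ with $L_j = V(H) \times \{p_j\}$, and every edge of $G$ lies within some $L_j$ or between consecutive layers $L_j, L_{j+1}$. The plan is to color each layer independently with a coloring $\psi_j \colon V(H) \to [f_1(k,3,1)]$ supplied by \cref{prop:treewidth-fine}, and then force disjoint palettes on the three classes $j \bmod 3 \in \{0,1,2\}$ so that constraints at each vertex decouple cleanly by neighbor-layer.

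For each $j$, define three directed subgraphs of $H$ on the vertex-set $V(H)$: the bidirectional version of the projection of $G[L_j]$ to $V(H)$, and two loopless directed graphs $\vec{D}_j^-$ and $\vec{D}_j^+$ with arcs $\vec{wu}$ (for $u \ne w$) whenever $(w,p_{j-1})(u,p_j) \in E(G)$ and whenever $(w,p_{j+1})(u,p_j) \in E(G)$, respectively. These are genuine directed subgraphs of $H$ because any edge of $H \boxtimes P$ between distinct $H$-coordinates in adjacent $P$-layers projects to an edge of $H$. Applying \cref{prop:treewidth-fine} to $H$ with $\ell = 3$, $m = 1$ (and one dummy marked set $M_1 = \emptyset$) yields a coloring $\psi_j$ that is proper on $H$ and strong odd on all three directed graphs. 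Define $\Psi \colon V(G) \to [f_1(k,3,1)] \times \{0,1,2\}$ by $\Psi(u,p_j) = (\psi_j(u),\, j \bmod 3)$; this uses at most $3 f_1(k,3,1)$ colors.

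Properness of $\Psi$ is immediate: within a layer it follows from $\psi_j$ being proper on $H$, and across adjacent layers the second coordinate changes. For the strong-odd condition, fix $v = (u,p_j) \in V(G)$ and a color $(c,r)$. By disjointness of palettes, only neighbors of $v$ lying in the single layer $L_t$ with $t \in \{j-1,j,j+1\}$ and $t \bmod 3 = r$ can contribute. For $t = j$, the count equals $|N_{G[L_j]}(u) \cap \psi_j^{-1}(c)|$, which is zero or odd because $\psi_j$ is strong odd on the projection of $G[L_j]$. For $t = j+1$, the count is controlled by $\psi_{j+1}$ being strong odd on $\vec{D}_{j+1}^-$, up to the contribution of the potential self-edge $(u,p_j)(u,p_{j+1})$; the case $t = j-1$ is symmetric via $\vec{D}_{j-1}^+$.

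The main subtlety is this self-edge contribution, which cannot be encoded as a loop because \cref{prop:treewidth-fine} operates on loopless directed graphs. To dispose of it, observe that $\psi_{j+1}$ is proper on $H$, so the color $\psi_{j+1}(u)$ does not appear on any neighbor of $u$ in $H$, and in particular not on any out-neighbor of $u$ in $\vec{D}_{j+1}^-$. Hence if the self-edge is present, it can only add $1$ to the count of color $c = \psi_{j+1}(u)$ (which was forced to be $0$), and contributes $0$ to every other color; in either case the 0-or-odd condition is preserved. This gives $\chiso(G) \le 3 f_1(k,3,1)$, and setting $f(k) = 3 f_1(k,3,1)$ completes the proof.
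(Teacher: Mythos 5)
Your proof is correct and follows essentially the same route as the paper: layer $H \boxtimes P$, project the intra- and inter-layer edges of $G$ into each layer as three directed subgraphs of $H$, apply \cref{prop:treewidth-fine} with $\ell = 3$ per layer, separate palettes by the layer index mod $3$, and dispose of the vertical edge $(u,p_j)(u,p_{j\pm1})$ via properness of the layer coloring, exactly as in the paper's proof of \cref{prop:row-treewidth-fine}. The only difference is that you prove just \cref{thm:row-treewidth} rather than the stronger statement with marked sets $M_1,\dots,M_m$ (the layer ``types'' and the final recoloring), which the paper needs only for the later clique-sum step, not for this theorem.
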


This follows from a stronger statement (cf. \cref{prop:row-treewidth-fine} below).
Recall that a graph~$G$ has row-treewidth~$k$ if there exists a $k$-tree~$H$ and a path~$P$ such that $G \subseteq H \boxtimes P$. 
Thus, it suffices to construct a proper coloring of~$H \boxtimes P$ that restricts to a strong odd coloring on~$G$.

The basic idea resides in coloring each layer of~$H \boxtimes P$ individually, using the same color set only on every third layer.
The $i$-th layer $L_i$ of $H \boxtimes P$ corresponds to a copy of~$H$.
Thus, according to \cref{thm:treewidth-main}, there is a coloring of~$L_i$ which only uses few colors and restricts to a strong odd coloring on~$G \cap L_i$.
The coloring of~$H$ we obtain by independently coloring each layer (using the same color set only on every third layer) is proper on~$H$, yet, it may not be strong odd on~$G$ as we did not take inter-level-edges into account.
We will account for inter-level edges by considering their projections into each layer~$L_i$ and modeling these as directed graphs.

\begin{proposition}\label{prop:row-treewidth-fine}
    Let~$k,m \geq 0$ be integers.
    There exists a constant~$f_2(k,m)$ such that for every $k$-tree~$H$, every path~$P$, every directed subgraph~$\vec{G}$ of~$H \boxtimes P$ and every collection~$M_1, \dots, M_m$ of subsets of~$V(H \boxtimes P)$, there is a coloring~$\Psi\colon V(H \boxtimes P) \to [f_2(k,m)]$ of~$H \boxtimes P$ with the following properties:
    \begin{enumerate}[(R1)]
        \item \label{enum-rtw:proper} $\Psi$ is a proper coloring of~$H \boxtimes P$
        \item \label{enum-rtw:strong_odd} $\Psi$ restricts to a strong odd coloring on the directed subgraph~$\vec{G}$
        \item \label{enum-rtw:marks-odd} $\Psi$ is strong odd on the set~$M_j$ for every $j \in [m]$. 
    \end{enumerate}
\end{proposition}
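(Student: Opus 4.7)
The plan is to color each layer $L_t$ of $H \boxtimes P$ individually by applying \cref{prop:treewidth-fine} to $L_t \cong H$, and then to synchronize these layer-colorings by appending a small amount of ``layer data'' as extra coordinates of each color. Since every vertex of $L_t$ has neighbors only in $L_{t-1} \cup L_t \cup L_{t+1}$, including $t \bmod 3$ as a coordinate already forces the out-neighbors of any vertex of a given color $c$ to lie in a single layer $L_s$, which reduces the strong-odd condition on $\vec{G}$ to intra-layer conditions on $L_s$.

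First, I would encode all inter-layer directed edges of $\vec{G}$ as intra-layer directed graphs. For each layer $L_t$ set $\vec{G}_t^{(0)} \coloneqq \vec{G} \cap L_t$; for each $v \in L_{t-1}$ with copy $v' \in L_t$ (the vertex of $L_t$ with the same $H$-coordinate as $v$), add to $\vec{G}_t^{(-)}$ a directed edge $\vec{v'u}$ for every $u \in N^+(\vec{G},v) \cap L_t \setminus \set{v'}$, and define $\vec{G}_t^{(+)}$ symmetrically from $L_{t+1}$. The strong product structure guarantees that each such $u$ is adjacent to $v'$ in $L_t$, so $\vec{G}_t^{(\pm)}$ is indeed a directed subgraph of $L_t \cong H$. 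Applying \cref{prop:treewidth-fine} to $L_t$ with these three directed subgraphs ($\ell=3$) and with the $m$ marks $M_j \cap L_t$ yields a coloring $\psi_t \colon V(L_t) \to [f_1(k,3,m)]$.

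To handle (R\ref{enum-rtw:marks-odd}), which is delicate since a single $M_j$ may spread over many layers that all reuse the same $\psi_t$-palette, I would synchronize the $\psi_t$'s via types and layer-colorings, in direct analogy with $\varphi_2$ and $\varphi_3$ in the proof of \cref{prop:treewidth-fine}. The type $A_t$ of $L_t$ is the $\set{0,1}$-matrix indexed by $[m] \times [f_1(k,3,m)]$ whose $(j,c)$-entry is $1$ iff $M_j \cap L_t \cap \psi_t^{-1}(c) \neq \varnothing$. For each pair $(A,r)$ with $r \in \set{0,1,2}$, let $\mathcal{L}_{A,r}$ be the set of layers of type $A$ with $t \equiv r \pmod 3$, and choose a coloring $\sigma_{A,r}$ of $\mathcal{L}_{A,r}$ using at most two colors and with all color classes of odd size (split off a singleton when $|\mathcal{L}_{A,r}|$ is even). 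Finally set
\[
    \Psi(v) \;=\; \bigl(\psi_t(v),\; A_t,\; t \bmod 3,\; \sigma_{A_t,\, t \bmod 3}(L_t)\bigr) \qquad \text{for } v \in L_t,
\]
which uses a number of colors depending only on $k$ and $m$ (since the number of possible types is bounded by $2^{m \cdot f_1(k,3,m)}$).

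Verification of (R\ref{enum-rtw:proper})-(R\ref{enum-rtw:marks-odd}) is then routine. Properness follows from the residue coordinate for inter-layer edges and from the properness of each $\psi_t$ for intra-layer edges. For (R\ref{enum-rtw:strong_odd}), given $v \in L_t$ and $c = (c_1,c_2,c_3,c_4)$, the residue coordinate confines $N^+(\vec{G},v) \cap \Psi^{-1}(c)$ to the unique layer $L_s \in \set{L_{t-1},L_t,L_{t+1}}$ with $s \equiv c_3 \pmod 3$, and the remaining coordinates require $A_s = c_2$ and $\sigma_{A_s,c_3}(L_s) = c_4$ (otherwise the set is empty); the count then reduces to the strong-odd property of $\psi_s$ on the appropriate $\vec{G}_s^{(\cdot)}$, with a short extra argument when $v$'s copy $v'$ in $L_s$ is itself an out-neighbor of $v$ (properness of $\psi_s$ forces the remaining out-neighbors of $v'$ in $L_s$ to have $\psi_s$-colors different from $\psi_s(v')$, so the count reduces to~$1$). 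For (R\ref{enum-rtw:marks-odd}), the type definition guarantees that $|M_j \cap L_t \cap \psi_t^{-1}(c_1)|$ is either $0$ for every layer $L_t$ of type $c_2$ or odd for every such layer; summing over the odd-sized layer-color class $\sigma_{A,r}^{-1}(c_4)$ gives either $0$ or a sum of an odd number of odd integers, hence $0$ or odd. The main obstacle is precisely (R\ref{enum-rtw:marks-odd}); the type-plus-layer-color refinement is the row-treewidth analog of the final splitting step in the proof of \cref{prop:treewidth-fine}.
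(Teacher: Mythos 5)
Your proposal is correct and follows essentially the same route as the paper: project the inter-layer edges of $\vec{G}$ into each layer as directed subgraphs of $L_t \cong H$, apply \cref{prop:treewidth-fine} per layer with $\ell = 3$ and marks $M_j \cap L_t$, and append the layer type and the index mod $3$ as extra color coordinates. The only (cosmetic) difference is in the last step for (R\ref{enum-rtw:marks-odd}): the paper recolors one layer for every color appearing on an even number of layers, while you achieve the same odd-number-of-layers property up front via the fourth coordinate $\sigma_{A,r}$; both arguments are equivalent.
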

\begin{proof}
    We first construct a coloring~$\Phi$ of~$H \boxtimes P$  that is the product of three colorings~$\varphi_1, \varphi_2, \varphi_3$ and fulfills (R\ref{enum-rtw:proper}) and (R\ref{enum-rtw:strong_odd}).
    A small modification then yields the desired coloring~$\Psi$.

    \proofsubparagraph{Construction of~$\bm{\varphi_1}$.}
    Let~$p_1, p_2, \dots, p_n$ be the order of vertices along the path~$P$.
    Recall that the vertices of~$H \boxtimes P$ can be partitioned into layers~$L_d = \set{(v,p_d) \given v \in V(H)}$. 
    Each layer corresponds to a copy of~$H$.
    We write $v_d = (v,p_d)$ for the vertex in the~$d$-th layer representing a vertex~$v \in V(H)$.
    For every layer~$L_d$, we define three directed subgraphs~$\vec{G}_d, \vec{G}_{d-1,d}$ and~$\vec{G}_{d+1,d}$, see \cref{fig:rtw-directed-graphs} for an example.
    These three graphs model the three different types of edges of~$\vec{G}$ incident to vertices in~$L_d$:
    the intra-level edges within~$L_d$ (represented by~$\vec{G}_d$) the inter-level edges between~$L_{d-1}$ and $L_{d}$ (represented by~$\vec{G}_{d-1,d}$), and the inter-level edges between~$L_{d+1}$ and~$L_{d}$ (represented by~$\vec{G}_{d+1,d}$).
    \begin{figure}
        \centering
        \begin{subfigure}[b]{0.55\textwidth}
        \centering
        \includegraphics[page=2]{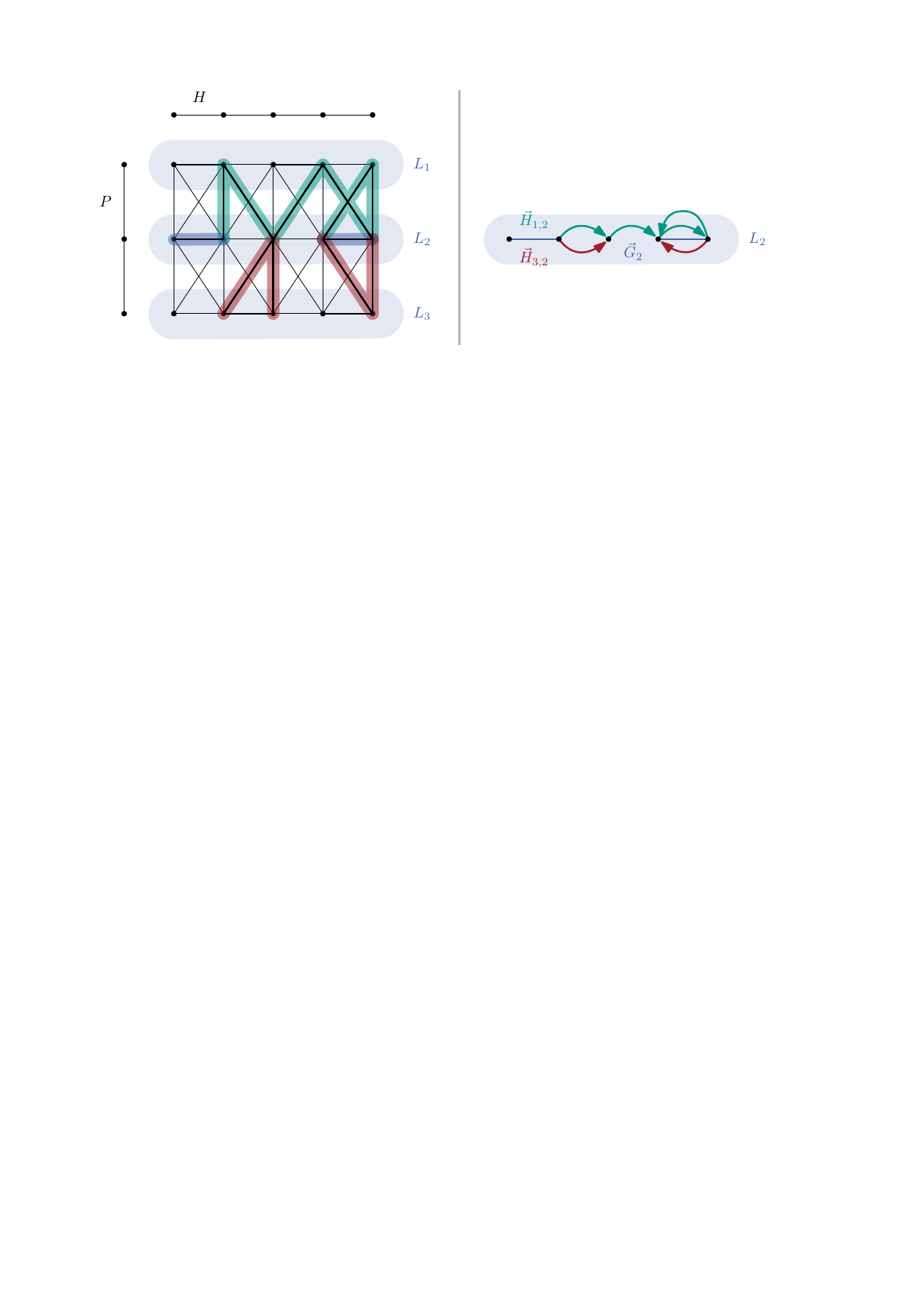}
        \caption{}
        \label{fig:rtw-layering}
        \end{subfigure}\hfill
        \begin{subfigure}[b]{0.45\textwidth}
        \centering
        \includegraphics[page=5]{rtw_figures.pdf}
        \caption{}
        \label{fig:rtw-projection}
        \end{subfigure}
        \caption{(\subref{fig:rtw-layering}) Here, the edges of the directed graph~$\vec{G}$ (represented with thick edges) are present in both directions. The edges of~$\vec{G}$ represented by~$\vec{G}_2$, $\vec{H}_{1,2}$, $\vec{H}_{3,1}$ are highlighted in blue, green and red respectively. (\subref{fig:rtw-projection}) The directed graphs~$\vec{G}_2$, $\vec{H}_{1,2}$, $\vec{H}_{3,1}$ are represented.}
        \label{fig:rtw-directed-graphs}
    \end{figure}
    \begin{itemize}
        \item $\vec{G}_d$ is the graph~$G[V(\vec{G}) \cap L_d]$ that corresponds to the subgraph of~$\vec{G}$ within~$L_d$,
        \item $\vec{G}_{d-1,d}$ is the graph on the vertices of~$L_{d}$ where two vertices~$u_{d} = (u,p_{d})$ and~$v_{d} = (v,p_{d})$ with~$u_{d} \neq v_{d}$ are connected with an edge~$\overrightarrow{u_{d} v_{d}}$ if and only if~$u_{d-1} v_{d} \in E(\vec{G})$,
        \item $\vec{G}_{d+1,d}$ is the graph on the vertices of~$L_d$ where two vertices~$u_d = (u,p_d)$ and~$v_d = (v,p_{d})$ with~$u_d \neq v_d$ are connected with an edge~$\overrightarrow{u_d v_d}$ if and only if~$u_{d+1} v_{d} \in E(\vec{G})$.
    \end{itemize}
    For each layer~$L_d$, let~$\gamma_d \colon L_d \to [f_1(k,3,m)]$ be the coloring given by \cref{prop:treewidth-fine} with respect to the $k$-tree~$H$, the directed graphs~$\vec{G}_{d-1,d}, \vec{G}_{d+1,d}$ and $\vec{G}_d$ and the restrictions~$M_1 \cap L_d, \dots, M_m \cap L_d$ of the sets~$M_i$ to layer~$L_d$.
    The coloring~$\varphi_1$ of a vertex~$v_d \in L_d$ corresponds to its color in $\gamma_d$.

    \proofsubparagraph{Construction of~$\bm{\varphi_2}$}
    We assign to each layer~$L_d$ a type based on the colors of~$\gamma_d$. 
    This type is a $\mathsf{0}\,$-$\mathsf{1}$ matrix~$A_d$ with $m$ rows and $3 \cdot f_1(k,3,m)$ columns defined as follows: 
    \begin{itemize}
        \item For $c \in [3 \cdot f_1(k,3,m)]$, column~$c$ corresponds to color~$c$ in the coloring~$\gamma_d$ of $L_d$.
        \item For $j \in [m]$, row~$j$ corresponds to the set~$M_j \cap V(L_d)$.
        The matrix~$A_d$ has a $\mathsf{1}$ in row $j$ and column $c$ if and only if at least one vertex in~$M_{j}$ in layer~$L_d$ has color~$c$ in~$\gamma_d$.
    \end{itemize}
    The color of a vertex in~$H \boxtimes P$ under~$\varphi_2$ corresponds to the type of its layer.
    
    \proofsubparagraph{Definition and Properties of~$\bm{\Phi}$}
    Now we define the coloring~$\Phi$.
    Let $v_d \in L_d$. 
    We define $\Phi(v_d) = (\varphi_1(v_d),\varphi_2(v_d),\varphi_3(v_d))$ where 
    \begin{itemize}
        \item $\varphi_1(v_d) = \gamma_d(v_d)$ is the color of~$v$ under the coloring~$\gamma_d$ of~$L_d$
        \item $\varphi_2(v_d) = A_d$ is the type of~$L_d$,
        \item $\varphi_3(v_d) = d \bmod 3$ is  the index of the layer~$L_d$  modulo~$3$.
    \end{itemize}
   Clearly,~$\Phi$ is a proper coloring of~$H \boxtimes P$ with at most~$f_1(k,3,m) \cdot 2^{m \cdot f_1(k,3,m)} \cdot 3$ colors, thus~$\Phi$ fulfills~(T\ref{enum-rtw:proper}).
    It remains to show that~$\Phi$ restricts to a strong odd coloring on~$\vec{G}$.
    Consider a vertex~$v_d \in V(H \boxtimes P)$.
    Its out-neighborhood~$N^+(\vec{G},v)$ in~$\vec{G}$ splits into three sets~$N_i \coloneqq N^+(\vec{G},v) \cap L_i$ with $i \in \set{d-1,d,d+1}$.
    A different color palette is used on each of the sets~$N_i$ due to the second entry $\varphi_2$ of~$\Phi$.
    It thus suffices to show that~$\Phi$ is strong odd on each of the sets~$N_i$.
    Note that we have 
    \[N_d = N^+(\vec{G}_d,v_d),\quad N_{d-1} \subseteq N^+(\vec{G}_{d,d-1},v_d) \cup \set{v_{d-1}},\quad N_{d+1} \subseteq N^+(\vec{G}_{d,d+1},v_d) \cup \set{v_{d+1}}.\]
    The coloring~$\Phi$ is clearly strong odd on~$N_d$, as~$\gamma_d$ is strong odd on~$N^+(\vec{G}_d,v_d)$ and neither~$\varphi_2$ nor~$\varphi_3$ change the color classes within a single layer.
    We now argue that~$\Phi$ is strong odd on~$N_{d-1}$.
    The vertex~$v_{d-1}$ is adjacent to every vertex in~$N^+(\vec{G}_{d,d-1},v_d)$. 
    As the coloring~$\Phi$ is proper, no vertex in~$N^+(\vec{G}_{d,d-1},v_d)$ has the same color as~$v_{d-1}$, i.e. $\Phi$ is strong odd on~$N_{d-1}$ if it is strong odd on~$N^+(\vec{G}_{d,d-1},v_d)$.
    Recall that~$\gamma_{d-1}$ restricts to a strong odd coloring on~$\vec{G}_{d,d-1}$.
    It follows that $\Phi$ is strong odd on~$N_{d-1}$.
    A similar argument shows that~$\Phi$ is strong odd on~$N_{d+1}$.

    Thus, $\Phi$ is a vertex-coloring of~$H \boxtimes P$ with $f_1(k,3,m) \cdot 2^{m \cdot f_1(k,3,m)} \cdot 3$ colors that fulfills (T\ref{enum-rtw:proper})-(T\ref{enum-rtw:strong_odd}).

    \proofsubparagraph{Construction of~$\bm{\Psi}$.}
    Due to the first entry~$\varphi_1$ of~$\Phi$, the coloring~$\Phi$ is strong odd on the sets~$L\cap M_j$ for every layer~$L$ and every $j \in [m]$.
    From the definition of types, we conclude the following.
    \begin{claim}
        Let~$c$ be a color of~$\Phi$, $j \in [m]$ and~$L$ a layer.
        If~$L \cap \Phi^{-1}(c) \neq \varnothing$ and $M_j \cap \Phi^{-1}(c) \neq \varnothing$, then $L \cap M_j \cap \Phi^{-1}(c) \neq \varnothing$.
    \end{claim}

    We observe that if every color appears on an odd number of layers, then $\Phi$ is strong odd on each of the sets~$M_j$.
    This can be achieved as follows.
    If a color~$c$ of~$\Phi$ appears on an even number of layers, we recolor all vertices of color~$c$ on one layer in a new color~$c'$.
    Proceeding in such a way for every color of~$\Phi$ yields the coloring~$\Psi$. 
    The coloring~$\Psi$ thus satisfies (R\ref{enum-rtw:marks-odd}).
    As~$\Psi$ still satisfies (R\ref{enum-rtw:proper})--(R\ref{enum-rtw:strong_odd}) and uses only two times as many colors as~$\Phi$, namely~$f_2(k,m) = 6 \cdot f_1(k,3,m) \cdot 2^{m \cdot f_1(k,3,m)}$, the result follows.
\end{proof}

\section{Strong Odd Colorings of Proper Minor-Closed Graph Classes}
\label{sec:proper_minor_closed}

The aim of this section is to bound the strong odd chromatic number in subgraphs of $(w,k,t)$-sums in~$w,k$ and~$t$.

\begin{theorem}\label{thm:wkt-sums}
    There exists a function~$f$ such that for every subgraph~$G$ of a $(w,k,t)$-sum we have \[\chiso(G) \leq f(w,k,t).\]
\end{theorem}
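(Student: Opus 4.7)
The plan is to prove the theorem by induction on $w$ through a stronger statement in the spirit of \cref{prop:treewidth-fine}: for all $w, k, t \geq 0$ and $\ell, m \geq 1$, there exists a constant $f_3(w,k,t,\ell,m)$ such that every $(w,k,t)$-sum $S$ admits a coloring with at most $f_3(w,k,t,\ell,m)$ colors that is proper, strong odd on each of $\ell$ prescribed directed subgraphs $\vec{H}_1,\dots,\vec{H}_\ell$, and strong odd on each of $m$ prescribed subsets $M_1,\dots,M_m$ of $V(S)$. \cref{thm:wkt-sums} then follows by applying the stronger statement to a given subgraph $G \subseteq S$ viewed as a directed graph on $V(S)$ with both orientations of each edge of $G$, and setting $\ell=m=1$ (taking an arbitrary $M_1$).

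For the base case $w=0$, a $(0,k,t)$-sum is a disjoint union of $(k,t)$-summands of the form $(H \boxtimes P)+K_t$. I would color each summand by first giving the $t$ apex vertices of $K_t$ fixed distinct colors (using the same palette across summands), and then coloring the $H \boxtimes P$ part using \cref{prop:row-treewidth-fine} with the restrictions of the prescribed directed graphs and subsets, augmented by the sets $N^+(\vec{H}_i,a)\cap V(H \boxtimes P)$ for every apex vertex $a$ and every $i \in [\ell]$. Uniqueness of apex colors within the summand then guarantees strong oddness at apex vertices. To combine the summand colorings into a single global coloring with a bounded palette while preserving strong oddness globally on each $M_j$, I would apply the type-and-split argument from the last part of the proof of \cref{prop:treewidth-fine}: assign each summand a type encoding which pairs $(c,j)$ satisfy $M_j \cap \psi^{-1}(c)\cap V(F)\neq\varnothing$ within that summand, and split color classes so that every final color is used on an odd number of summands of each type.

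For the inductive step $w \geq 1$, I would take a natural layering $L_1,\dots,L_s$ of $S$ given by \cref{lem:sum_natural_layering} and color the layers sequentially, recycling the same palette every three layers. By (N\ref{property_layering_sum:layer_w-1}) each induced subgraph $S[L_d]$ is a subgraph of a $(w-1,k,t)$-sum, so by induction I can color the components of $S[L_d]$ while respecting the restrictions of the $\vec{H}_i$ and $M_j$ to the layer and, for each vertex $p$ of the parent-clique $Q \subseteq L_{d-1}$ of a component $C$, also the set $N^+(\vec{H}_i,p)\cap V(C)$; by (N\ref{property_layering_sum:parent-clique}) this adds only $w\ell$ extra sets per component. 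I would then assign each parent-clique $Q$ a type encoding the coloring pattern of its children together with their intersections with the $M_j$, color the parent-cliques of each type via a clique-coloring analog of \cref{lem:treewidth-clique-coloring} for $(w-1,k,t)$-sums, and form the final coloring of $L_d$ as a product of (i) the inductive coloring inside the layer, (ii) the type of the parent-clique, (iii) its clique-color, and (iv) the layer index modulo three. A closing splitting step over layers then yields global strong oddness on each $M_j$, exactly as in the proof of \cref{prop:treewidth-fine}.

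The main obstacle will be establishing the clique-coloring analog of \cref{lem:treewidth-clique-coloring} for $(w-1,k,t)$-sums. In a $k$-tree there is a canonical bijection between $(k+1)$-cliques and their ``last-added'' vertex, which is exactly what makes the auxiliary directed graph in that proof work; $(w-1,k,t)$-sums lack such a canonical representative for cliques of size $w$ in general. I would work around this by selecting for each relevant clique $Q$ an arbitrary representing vertex $v_Q \in Q$, encoding vertex-clique incidences via an auxiliary directed graph on $S[L_{d-1}]$ together with the set of representing vertices, and invoking the stronger inductive hypothesis for the ambient $(w-1,k,t)$-sum containing $S[L_{d-1}]$. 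The remaining counting bookkeeping then follows the template of the proof of \cref{lem:treewidth-clique-coloring}.
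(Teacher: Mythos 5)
Your overall architecture matches the paper's: the paper also proves a stronger statement (\cref{prop:sum-fine}, with one directed subgraph and $m$ marked sets), inducts on $w$, uses the natural layering of \cref{lem:sum_natural_layering}, colors each layer by the $(w-1,k,t)$-induction augmented with the out-neighborhoods of the at most $w$ parent-clique vertices, assigns types to parent-cliques, colors cliques of each type by a clique-coloring lemma (\cref{lem:sum-clique-coloring}), takes the four-entry product coloring, and finishes with the parity-splitting over layers; the base case $w=0$ is likewise handled via \cref{lem:summand-fine} plus a type-and-split step. So up to bookkeeping your plan is the paper's plan.

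The genuine gap is exactly at the step you flag, and your proposed workaround does not work as stated. Choosing an \emph{arbitrary} representative $v_Q \in Q$ for each parent-clique breaks both halves of the clique-coloring lemma. First, a single vertex may end up representing unboundedly many cliques of $B$ (e.g.\ when many size-$2$ parent-cliques pairwise intersect; a system of distinct representatives need not exist, and no refinement into boundedly many types that is independent of the choice repairs this). Then the auxiliary digraph, which has no multi-edges, loses the multiplicities: for a vertex $x$, the number of cliques of a given color containing $x$ is no longer the number of out-neighbors of that color, so strong oddness of $\psi$ on $N^+(\vec{D},x)$ does not transfer to (S'\ref{enum-clique-coloring_clique-sums:vertex_in_odd_number_of_cliques}); and the identity $\abs{B\cap\sigma^{-1}(c)}=\abs{V_B\cap\psi^{-1}(c)}$ needed for the odd-size color classes (S'\ref{enum-clique-coloring_clique-sums:odd_sized_color_classes}) fails outright. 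The paper avoids this by \emph{not} picking representatives inside $Q$: every clique $Q$ of a $(w,k,t)$-sum extends to a canonical clique $Q'$ of size $2(k+1)+t$ spanning a $(k+1)$-clique of the underlying $k$-tree $H$ in two consecutive layers of $H\boxtimes P$ plus $K_t$; the type of $Q$ records its intersection pattern with these three parts (boundedly many types), and the representative is the last-inserted vertex of that $(k+1)$-clique in the lower layer. This guarantees each vertex represents at most one clique per type, and one then has to verify separately that the auxiliary digraph is a subgraph of $S$, since the representative need not lie in $Q$. If you want to stay closer to your sketch, you would at least need a non-arbitrary choice with bounded multiplicity per vertex (e.g.\ exploiting the bounded degeneracy or, as the paper does, the product structure of the summands), and then split into boundedly many injectively-represented classes; without such a device the induction does not close.
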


As every proper minor-closed graph class is obtained by taking subgraphs of $(w,k,t)$-sums (cf. \cref{thm:minor-closed-sums}), this implies our main result~\cref{thm:minor-closed}.

In order to show that the strong chromatic number is bounded in terms of treewidth, we considered BFS-layerings. 
If~$L_1, \dots, L_d$ is a BFS-layering of a $k$-tree~$G$ (cf. \cref{obs:bfs-layering}), the neighbors of every component~$C$ of a layer~$L_i$ form a $k$-clique~$Q$ in~$L_{i-1}$.
The strong odd coloring now arises in part by coloring the components~$C$ independently.

To bound the strong odd chromatic number of subgraphs of $(w,k,t)$-sums, we consider natural layerings instead. 
Here, $(k,t)$-summands play a role very similar to the graphs~$C \cup Q$ in the case of bounded treewidth.

\begin{lemma}
\label{lem:summand-fine}
    Let~$k, m, t \geq 0$ be integers.
    There exists a constant~$f_3(k,t,m)$ such that for every $(k,t)$-summand~$F$ and every directed subgraph~$\vec{G}$ of~$F$ and every collection~$M_1, \dots, M_m$ of subsets of~$V(F)$, there is a coloring~$\Psi\colon V(F) \to [f_3(k,t,m)]$ of~$F$ with the following properties:
    \begin{enumerate}[(F1)]
        \item \label{enum-summand:proper} $\Psi$ is a proper coloring of~$F$,
        \item \label{enum-summand:strong_odd} $\Psi$ restricts to a strong odd coloring on the directed subgraph~$\vec{G}$,
        \item \label{enum-summand:marks-odd} $\Psi$ is strong odd on the set~$M_j$ for every $j \in [m]$. 
    \end{enumerate}
\end{lemma}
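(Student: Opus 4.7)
The plan is to reduce to Proposition~\ref{prop:row-treewidth-fine} by handling the $t$ universal vertices of $K_t$ separately. Write $F = (H \boxtimes P) + K_t$ and let $u_1,\ldots,u_t$ denote the universal vertices. I reserve $t$ fresh private colors $c_1,\ldots,c_t$, set $\Psi(u_i)=c_i$, and demand that the coloring produced on $V(H\boxtimes P)$ uses a palette disjoint from $\{c_1,\ldots,c_t\}$. This instantly takes care of properness inside $K_t$ and properness across the join: no universal vertex can share a color with any non-universal vertex, and the $u_i$ are pairwise distinct in color.

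The work is to color $H\boxtimes P$ correctly, and the key observation is that the ``strong odd on a set $M$'' slot of Proposition~\ref{prop:row-treewidth-fine} was designed precisely to absorb the constraints imposed by the universal vertices. For each $u_i$ and each color $c$ used on $V(H\boxtimes P)$, we need $|N^+(\vec G,u_i)\cap V(H\boxtimes P)\cap \Psi^{-1}(c)|$ to be zero or odd; equivalently, the coloring of $H\boxtimes P$ must be strong odd on the set $R_i := N^+(\vec G,u_i)\cap V(H\boxtimes P)$. Likewise, each universal vertex in $M_j$ contributes at most $1$ to any color class (since the $c_i$ are unique), so it suffices that the coloring of $H\boxtimes P$ be strong odd on $M_j\cap V(H\boxtimes P)$ for every $j\in[m]$. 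Applying Proposition~\ref{prop:row-treewidth-fine} to the $k$-tree $H$, the path $P$, the directed subgraph $\vec G' := \vec G[V(H\boxtimes P)]$, and the $m+t$ subsets $\{M_j\cap V(H\boxtimes P)\}_{j\in[m]}\cup\{R_i\}_{i\in[t]}$ yields a coloring $\Psi_0\colon V(H\boxtimes P)\to[f_2(k,m+t)]$ satisfying all three required conditions on $H\boxtimes P$. Combining $\Psi_0$ with the colors $c_1,\ldots,c_t$ on the universal vertices gives the desired $\Psi$ on $V(F)$ using at most $f_3(k,t,m):=f_2(k,m+t)+t$ colors.

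Checking (F\ref{enum-summand:proper})--(F\ref{enum-summand:marks-odd}) is then routine. Property~(F\ref{enum-summand:proper}) follows from properness of $\Psi_0$ inside $H\boxtimes P$, distinctness of the $c_i$, and disjointness of the palettes. For~(F\ref{enum-summand:strong_odd}), fix a vertex $v\in V(\vec G)$ and a color $c$, and split $N^+(\vec G,v)$ into its intersection with $V(H\boxtimes P)$ and with $V(K_t)$: each universal color $c_i$ contributes at most $1$ (hence zero or odd), each $\Psi_0$-color is controlled either by strong oddness of $\Psi_0$ on $\vec G'$ (when $v\in V(H\boxtimes P)$) or by strong oddness of $\Psi_0$ on $R_i$ (when $v=u_i$). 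Property~(F\ref{enum-summand:marks-odd}) follows identically, decomposing $M_j$ into its universal and non-universal parts. There is no real obstacle; the only conceptual point is spotting that the ``strong odd on a set'' parameter of Proposition~\ref{prop:row-treewidth-fine} exactly encodes the constraint a universal vertex places on the rest of the graph, so the $K_t$ part slots in as extra set-constraints without needing any new combinatorial argument.
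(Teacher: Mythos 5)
Your proposal is correct and follows essentially the same route as the paper: apply \cref{prop:row-treewidth-fine} to $H\boxtimes P$ with the restricted sets $M_j\cap V(H\boxtimes P)$ plus the out-neighborhoods of the universal vertices as additional sets, then give the $t$ universal vertices fresh private colors. Your write-up is just a more explicit verification of (F\ref{enum-summand:proper})--(F\ref{enum-summand:marks-odd}) than the paper gives.
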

\begin{proof}
    Let~$H$ be a $k$-tree and~$P$ be a path such that $F = (H \boxtimes P) + K_t$.
    Let~$v_1, \dots, v_t$ be the vertices of~$K_t \subseteq F$.
    For every vertex~$v_i$, let~$N_i \coloneqq N^+(\vec{G},v_i)-K_t$ denote the neighbors of~$v_i$ in $\vec{G} \cap (H \boxtimes P)$.
    Consider the coloring~$\Phi$ of~$H \boxtimes P$ with respect to the directed subgraph~$\vec{G}$, the sets~$M_j \cap V(H \boxtimes P)$ for $j \in [m]$ and the sets~$N_i$ for $i \in [t]$ we obtain by \cref{prop:row-treewidth-fine}. 
    Using $t$ new colors on the vertices of~$K_t$, we extend~$\Phi$ to a coloring~$\Psi$ of~$F$. 
    As~$\Phi$ satisfies (R\ref{enum-rtw:proper})-(R\ref{enum-rtw:marks-odd}), $\Psi$ satisfies (F\ref{enum-summand:proper})-(F\ref{enum-summand:marks-odd}).
\end{proof}

In a BFS-layering of a $k$-tree~$H$, each layer has treewidth at most~$k-1$ (cf. \cref{obs:bfs-layering}).
For $(w,k,t)$-sums, natural layerings have a similar property: By (N\ref{property_layering_sum:layer_w-1}), the induced subgraph of each layer of a $(w,k,t)$-sum is a subgraph of a $(w-1,k,t)$-sum. 
By induction on~$w$, we can now bound the strong odd chromatic number~$\chiso$ of subgraphs of~$(w,k,t)$-sums.

\begin{proposition}
\label{prop:sum-fine}
    Let~$k,t,w,m\geq 0$ be integers. 
    There exists a constant~$f_4(k,t,m,w)$ such that for every $(w,k,t)$-sum~$S$, every directed subgraph~$\vec{G}$ of~$S$ and every collection~$M_1, \dots, M_m$ of subsets of~$V(S)$, there is a coloring~$\Psi \colon V(S) \to [f_4(k,t,m,w)]$ of~$S$ with the following properties:
    \begin{enumerate}[(S1)]
        \item \label{enum-sum:proper} $\Psi$ is a proper coloring of the $(w,k,t)$-sum~$S$,
        \item \label{enum-sum:strong_odd} $\Psi$ restricts to a strong odd coloring on the directed subgraph~$\vec{G}$,
        \item \label{enum-sum:marks-odd} $\Psi$ is strong odd on the set~$M_j$ for every $j \in [m]$.
    \end{enumerate}
\end{proposition}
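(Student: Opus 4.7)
The plan is to prove \cref{prop:sum-fine} by induction on $w$, following closely the structure of the proof of \cref{prop:treewidth-fine}, with natural layerings (\cref{lem:sum_natural_layering}) playing the role of BFS-layerings and \cref{lem:summand-fine} providing the atomic building block.

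\textbf{Base case $w = 0$.} A $(0,k,t)$-sum is a disjoint union of $(k,t)$-summands $F_1, \ldots, F_s$. Apply \cref{lem:summand-fine} to each $F_i$ with inputs $\vec{G} \cap F_i$ and the restrictions $M_j \cap V(F_i)$, obtaining a coloring $\Psi_i$ using at most $f_3(k,t,m)$ colors. Combining the $\Psi_i$ into a single coloring is automatically proper and strong odd on $\vec{G}$, since $S$ has no cross-summand edges. To enforce (S\ref{enum-sum:marks-odd}), classify each $F_i$ by a \emph{type}: a $0/1$ matrix of size $m \times f_3(k,t,m)$ recording which colors appear in each $M_j \cap V(F_i)$. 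Assign a dedicated palette to each type; within one such palette the total count of any color in $M_j$ has the parity of $n_A$, the number of summands of type $A$. When $n_A$ is even, recolor one representative summand of type $A$ using a dedicated backup palette, which repairs the parity. The number of colors used is bounded purely in terms of $k$, $t$, $m$.

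\textbf{Inductive step $w \geq 1$.} Fix a natural layering $L_1, \ldots, L_\ell$ of $S$. By (N\ref{property_layering_sum:layer_w-1}), each $S[L_d]$ lies in a $(w{-}1, k, t)$-sum. For $d \geq 2$, build a coloring $\Phi$ of $L_d$ as a product $(\varphi_1, \varphi_2, \varphi_3, \varphi_4)$ in direct analogy with the proof of \cref{prop:treewidth-fine}. For each parent-clique $Q \subseteq L_{d-1}$ of size at most $w$ (property (N\ref{property_layering_sum:parent-clique})), let $S_Q \subseteq S[L_d]$ be the subgraph induced by the children of $Q$, and apply \cref{prop:sum-fine} inductively at $w{-}1$ to $S_Q$ with directed subgraph $\vec{G} \cap S_Q$ and with the sets $M_j \cap V(S_Q)$ augmented by one set $N^+(\vec{G}, v) \cap V(S_Q)$ per vertex $v \in Q$ (the vertices of $Q$ being indexed by their colors under a fixed bounded proper coloring of $L_{d-1}$). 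This gives $\varphi_1 = \varphi_Q$. Then $\varphi_2$ records the \emph{type} of $Q$ (a $0/1$ matrix as in the treewidth setting), $\varphi_3$ is a clique-coloring of the parent-cliques of each type, and $\varphi_4 = d \bmod 3$ keeps colors of consecutive layers disjoint. The three-case verification that $\Phi$ is strong odd on $\vec{G}$ and the final layer-splitting modification producing $\Psi$ from $\Phi$ enforcing (S\ref{enum-sum:marks-odd}) transcribe essentially verbatim from the proof of \cref{prop:treewidth-fine}.

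\textbf{Main obstacle.} The step requiring genuinely new work is the analog of \cref{lem:treewidth-clique-coloring} for $(w{-}1, k, t)$-sums, used to define $\varphi_3$: given a $(w{-}1, k, t)$-sum $T$ and a collection $\mathcal{Q}$ of cliques in $T$, produce a coloring $\sigma \colon \mathcal{Q} \to [g(w{-}1,k,t)]$ such that every vertex of $T$ sees each color in an odd or zero number of cliques of $\mathcal{Q}$ containing it, and every color class has odd size. In $k$-trees this was immediate from the canonical bijection between $(k{+}1)$-cliques and the vertex that creates them in the construction sequence; no such bijection is available in a $(w{-}1,k,t)$-sum. We would handle this by picking, via a fixed vertex ordering, a canonical representative $r_Q \in Q$ for each $Q \in \mathcal{Q}$, then encoding (T'\ref{enum-clique-coloring:vertex_in_odd_number_of_cliques}) as a strong-odd condition on an auxiliary directed subgraph of $T$ (edges from each $v \in Q$ to $r_Q$) and (T'\ref{enum-clique-coloring:odd_sized_color_classes}) as strong oddness on the set of representatives; the induction hypothesis \cref{prop:sum-fine} at $w-1$ applied to $T$ with these auxiliary data then yields the desired clique coloring. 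Distinct cliques sharing a representative require careful bookkeeping in the auxiliary set/graph data so that collisions contribute correctly to the parities. Once this ingredient is in place, the remainder of the proof is a routine adaptation of the treewidth argument.
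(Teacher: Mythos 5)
Your overall plan coincides with the paper's: induction on $w$, the base case handled per summand via \cref{lem:summand-fine} with a type/parity correction (your ``recolor one summand of each even type with a backup palette'' is a fine elementary substitute for the paper's application of \cref{prop:treewidth-fine} with $k=0$ to a $0$-tree of summands), and the inductive step via a natural layering with the four-coordinate coloring $(\varphi_1,\varphi_2,\varphi_3,\varphi_4)$ and the final layer-splitting, exactly as in \cref{prop:treewidth-fine}. However, the one place where genuinely new work is required is precisely the step you leave to ``careful bookkeeping'': the analogue of \cref{lem:treewidth-clique-coloring} for $(w-1,k,t)$-sums (the paper's \cref{lem:sum-clique-coloring}), needed to define $\varphi_3$. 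As sketched, your construction does not go through. If the representative $r_Q$ is chosen by a fixed vertex ordering, then unboundedly many cliques of $\mathcal{Q}$ can share the same representative already inside a single summand (e.g.\ a star is a $1$-tree, so one vertex can be the top vertex of arbitrarily many cliques of $B$). Then the induced clique coloring gives all these cliques the colour $\psi(r_Q)$, and both encodings break: for (S'\ref{enum-clique-coloring_clique-sums:vertex_in_odd_number_of_cliques}), a vertex $x$ may lie in several same-coloured cliques represented by a \emph{single} out-neighbour (or by $x$ itself), so strong oddness of $\psi$ on $N^+(\vec{D},x)$ controls the number of distinctly coloured \emph{representatives}, not the number of cliques; for (S'\ref{enum-clique-coloring_clique-sums:odd_sized_color_classes}), the clique--representative correspondence is no longer a bijection, so $\abs{B\cap\sigma^{-1}(c)}$ is a weighted sum and strong oddness of $\psi$ on the representative set says nothing about its parity. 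Since the multiplicity of a representative is unbounded, no finite amount of extra auxiliary sets attached to a single representative map can repair these parities.

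The missing idea (and the paper's resolution) is to first partition the cliques into boundedly many \emph{types} so that the representative map becomes injective on each type. Every clique $Q$ of a $(w-1,k,t)$-sum lies in a canonical clique $Q'$ of size $2(k+1)+t$ inside one summand $(H\boxtimes P)+K_t$, consisting of the same $(k+1)$-clique $Q_H$ of $H$ in two consecutive layers plus $K_t$; the type records which of these $2(k+1)+t$ positions $Q$ occupies (at most $2^{2(k+1)+t}$ types), and the representative is the copy, in the lower of the two layers, of the last-inserted vertex of $Q_H$ in the construction sequence of $H$. Within a fixed type each vertex represents at most one clique, so your auxiliary digraph/set encoding works type by type; one then invokes the induction hypothesis (\cref{prop:sum-fine} at $w-1$, with $m=1$) once per type and uses disjoint palettes for distinct types. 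Without this typing argument — which uses the internal product-plus-apex structure of the summands and is not a routine transcription of the $k$-tree case — your proof of the clique-coloring step, and hence of the inductive step, is incomplete.
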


Note that \cref{prop:sum-fine} immediately implies \cref{thm:wkt-sums}. The proof is very similar to the proof of \cref{prop:treewidth-fine}.
As in \cref{sec:treewidth}, we first deduce a consequence of \cref{prop:sum-fine}, that we will use in the induction step.

\begin{lemma}
\label{lem:sum-clique-coloring}
    If \cref{prop:sum-fine} holds for fixed integers~$w,k,t \geq 0$, and every integer~$m \geq 0$, then the following statement holds as well.

    There exists a constant~$g_4(k,t,w)$ such that for every $(w,k,t)$-sum~$S$ and every subset~$B$ of cliques in~$S$, there is a coloring~$\sigma\colon B \to [g_4(k,t,w)]$ of~$B$ with the following properties:
    \begin{enumerate}[(S'1)]
        \item \label{enum-clique-coloring_clique-sums:vertex_in_odd_number_of_cliques} 
        For every vertex~$v$ of~$S$, the coloring~$\sigma$ is strong odd on the set of cliques in~$B$ containing~$v$, and
        \item \label{enum-clique-coloring_clique-sums:odd_sized_color_classes}
        every color class has odd size.
    \end{enumerate}
\end{lemma}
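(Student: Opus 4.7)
My strategy is to mirror the approach of \cref{lem:treewidth-clique-coloring}, reducing the clique-coloring of $B$ to a vertex-coloring problem by giving each clique $Q \in B$ its own ``representative'' vertex. In the treewidth case, each $(k+1)$-clique of a $k$-tree has a canonical injective representative (namely, the last vertex in its construction), but no such natural assignment exists for cliques in a general $(w,k,t)$-sum. The plan is therefore to physically enlarge $S$ to a $(w^*, k, t+1)$-sum $\tilde{S}$ in which every $Q \in B$ is represented by an auxiliary vertex $x_Q$, and then invoke \cref{prop:sum-fine} on $\tilde{S}$ once.

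\emph{Step 1 (constructing $\tilde{S}$).} For each $Q \in B$ I introduce a new vertex $x_Q$ adjacent to exactly the vertices of $Q$, and for each original summand $F_i$ of $S$ I also introduce a ``dummy'' vertex $u_{F_i}$ universal to $F_i$. The modified original summand $\tilde{F}_i := F_i + u_{F_i}$ is then a $(k, t+1)$-summand, while each new clique-summand on $\{x_Q\} \cup Q$ has size at most $2(k+1) + t + 1 = 2(k+1) + (t+1)$ and hence also fits the $(k, t+1)$-template. Performing the gluings along the original attachment cliques of $S$ together with the cliques $Q \in B$ realizes $\tilde{S}$ as a $(w^*, k, t+1)$-sum with $w^* := \max(w, 2(k+1)+t)$, and $S$ sits in $\tilde{S}$ as an induced subgraph.

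\emph{Step 2 (applying \cref{prop:sum-fine}).} I define the directed subgraph $\vec{G}$ of $\tilde{S}$ with edge set $\{\vec{v x_Q} \mid Q \in B,\ v \in Q\}$, so that $N^+(\vec{G}, v) = \{x_Q \mid Q \in B,\ v \in Q\}$ for every $v \in V(S)$ (and $N^+(\vec{G}, v) = \varnothing$ otherwise), and I set $M_1 := \{x_Q \mid Q \in B\}$. Applying \cref{prop:sum-fine} to $\tilde{S}$ with this data yields a coloring $\psi$ of $\tilde{S}$ using at most $f_4(k, t+1, 1, w^*)$ colors that is proper on $\tilde{S}$, strong odd on $\vec{G}$, and strong odd on $M_1$. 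Setting $\sigma(Q) := \psi(x_Q)$ and using that $Q \mapsto x_Q$ is a bijection from $B$ to $M_1$, and from the cliques of $B$ containing a fixed $v \in V(S)$ to $N^+(\vec{G}, v)$, the strong oddness of $\psi$ on $N^+(\vec{G}, v)$ immediately gives (S'\ref{enum-clique-coloring_clique-sums:vertex_in_odd_number_of_cliques}), and the strong oddness of $\psi$ on $M_1$ gives (S'\ref{enum-clique-coloring_clique-sums:odd_sized_color_classes}). Taking $g_4(k,t,w) := f_4(k, t+1, 1, w^*)$ then yields the claimed bound.

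\emph{Main obstacle.} The subtle point is that the naive extension of $S$ by the $x_Q$'s alone is not a $(w',k',t')$-sum for any parameters bounded in $w, k, t$: a new $(|Q|+1)$-clique summand need not fit into the existing $(k,t)$-template when $|Q|$ attains the maximum value $2(k+1)+t$, and furthermore a $(k,t)$-summand is not automatically a $(k,t+1)$-summand since one cannot add a universal vertex to it without changing the graph. The dummy vertices $u_{F_i}$ are exactly what resolve this: they simultaneously lift each $F_i$ into a $(k, t+1)$-summand and create just enough room for the new clique-summands on $\{x_Q\} \cup Q$ to be $(k, t+1)$-summands as well. Once $\tilde{S}$ is identified as an honest $(w^*, k, t+1)$-sum, the rest follows the template of \cref{lem:treewidth-clique-coloring} verbatim.
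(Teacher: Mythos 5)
Your reduction of the clique-coloring to a vertex-coloring via new representative vertices $x_Q$ is clean in itself (indeed cleaner than handling a vertex that represents a clique containing itself), but it breaks the conditional structure of the statement, and this is a genuine gap rather than a cosmetic one. The lemma assumes \cref{prop:sum-fine} only for the \emph{fixed} parameters $w,k,t$ (and all $m$), whereas your Step~2 applies \cref{prop:sum-fine} to $\tilde{S}$, which is a $(w^*,k,t+1)$-sum with $w^*=\max(w,2(k+1)+t)\geq w$ and, crucially, with $t+1>t$. This stronger instance is not granted by the hypothesis, and it cannot be obtained from elsewhere: the lemma is invoked inside the proof of \cref{prop:sum-fine} itself (in the construction of $\varphi_3$, for the layer graphs, with parameters $(w-1,k,t)$), where the induction runs on $w$ alone with $k$ and $t$ fixed and there is no induction on $t$ at all. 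Plugging in your argument would require \cref{prop:sum-fine} for $(\,\cdot\,,k,t+1)$ in order to prove it for $(\,\cdot\,,k,t)$, i.e., the recursion never terminates. The paper avoids exactly this by staying inside $S$: cliques of $B$ are partitioned into boundedly many \emph{types} (at most $2^{k+2+t}$), within each type every clique gets an injective representative among the \emph{existing} vertices of $S$, and then \cref{prop:sum-fine} is applied to $S$ itself with the very parameters $w,k,t$ of the hypothesis (once per type, via a directed graph $\vec{D}_a$ and a set $V_{B(a)}$), in the spirit of \cref{lem:treewidth-clique-coloring}.

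A smaller, fixable inaccuracy: a clique on $\{x_Q\}\cup Q$ of arbitrary size at most $2(k+1)+t+1$ is not in general itself a $(k,t+1)$-summand (complete $(k,t+1)$-summands only occur in the sizes $k+t+1$, $k+t+2$, $2k+t+1$, $2k+t+3$), so you would have to pad each new summand up to a full $K_{2(k+1)+(t+1)}$ containing $Q$ and a designated $x_Q$; this does not harm your coloring argument, but it does not rescue the parameter issue above, which is where the proposal fails as a proof of the stated conditional.
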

\begin{proof}
    The idea of the proof is the following.
    We assign to every clique~$Q_x$ of~$S$ a representative~$x \in V(S)$ that is a vertex of~$S$.
    A coloring~$\psi$ of the vertices of~$S$ then induces a coloring of the cliques~$B$ where each clique~$Q_x$ is colored with the color~$\psi(x)$ of its representative~$x$.
    Yet, a vertex might represent several cliques.
    We therefore define types of cliques.
    For every type~$a$, we let~$B(a)$ be the set of cliques in~$B$ of type~$a$ and note that each vertex in~$S$ represents at most one clique in~$B(a)$.
    The sets~$B(a)$ form a partition of~$B$. 
    For each type~$a$, we construct a vertex-coloring~$\psi_a$ of~$S$ that induces a coloring~$\sigma_a$ of the set~$B(a)$.
    The union of these colorings yields the coloring~$\sigma$.
    
    \proofsubparagraph{Types of Cliques.}
    Every clique~$Q$ of~$S$ is a clique in some~$(k,t)$-summand~$F = (H \boxtimes P) + K_t$ where~$H$ is a $k$-tree and~$P$ a path.
    For every clique, we choose one such summand (there might be several options, we choose one of them).
    The clique~$Q$ contains vertices of up to two adjacent layers~$L_d$ and~$L_{d+1}$ of the strong product~$H \boxtimes P$ and some vertices of~$K_t$.
    Recall that every layer corresponds to a copy of~$H$.
    Note that~$Q$ is contained in a clique~$Q'$ of size~$2(k+1)+t$ that contains all vertices of~$K_t$ and induces the same $(k+1)$-clique~$Q_H$ of~$H$ in two adjacent layers~$L_d$ and~$L_{d+1}$ of~$H \boxtimes P$, see \cref{fig:clique-sum_clique-covering}.
    \begin{figure}
        \centering
        \includegraphics[page=1]{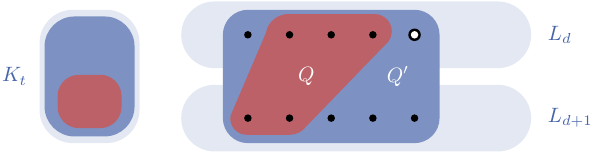}
        \caption{The clique~$Q$ (red) is contained in a clique~$Q'$ (blue) of size~$2(k+1)+t$. The vertex-ordering within each layer corresponds to the construction sequence in the~$k$-tree~$H$. The vertex representing~$Q$ is colored in white. }
        \label{fig:clique-sum_clique-covering}
    \end{figure}
    There might be several choices for~$Q'$, we fix one.
    
    The type of~$Q$ is derived from its intersection with~$L_d,L_{d+1}$ and~$K_t$.
    Let~$v_1, \dots, v_{k+1}$ be the insertion order of the vertices of~$Q_H$ in the construction sequence of~$H$.
    Now, we define~$a_1$ as the $\mathsf{0}$-$\mathsf{1}$-vector of length~$k+1$ where the $i$-th entry is~$\mathsf{1}$ if and only if~$v_i \in Q \cap L_d$.
    Similarly, we define~$a_2$ as the $\mathsf{0}$-$\mathsf{1}$-vector of length~$k+1$ where the $i$-th entry is~$\mathsf{1}$ if and only if~$v_i \in Q \cap L_{d+1}$.
    Fix an order~$u_1, \dots, u_t$ of the vertices of~$K_t$.
    We let $a_3$ be the $\mathsf{0}$-$\mathsf{1}$-vector of length~$t$ where the $i$-th entry is~$\mathsf{1}$ if and only if~$u_i \in Q \cap V(K_t)$.
    The type of~$Q$ now corresponds to the tuple~$(a_1, a_2, a_3)$.
    Note that there are at most $2^{k+2}\cdot 2^t$ different types of cliques.
    We say that the clique~$Q$ is represented by the vertex~$(v_{k+1},d)$, i.e.\ the vertex of~$Q_H$ of highest order in the $d$-th layer.
    Observe that a vertex~$(v,d) \in V(H \boxtimes P)$ represents at most one clique of type~$a$.
    We write~$Q_{(v,d)}$ for the clique represented by a vertex~$(v,d)$.
    Note in particular that a clique~$Q$ might be represented by a vertex that does not lie in~$Q$.
    
    \proofsubparagraph{Coloring Cliques of the Same Type.}
    Let~$a$ be a type and~$B(a) \coloneqq \set{Q \in B \given \text{$Q$ is of type~$a$}}$.
    As the number of types only depends on~$k$ and~$t$, we may use a different color palette for each set~$B(a)$.
    It thus suffices to find a coloring of~$B(a)$ that satisfies (S'\ref{enum-clique-coloring_clique-sums:vertex_in_odd_number_of_cliques}) and (S'\ref{enum-clique-coloring_clique-sums:odd_sized_color_classes}).
    If~$\psi$ is a vertex-coloring of~$S$, then we obtain a coloring~$\sigma$ of the cliques of type~$a$ in~$B$ by assigning every clique~$Q_{(v,d)} \in B(a)$ the color~$\psi(v,d)$ where $(v,d) \in V(H \boxtimes P)$ is the vertex representing~$Q_{(v,d)}$.

    \begin{claim}
        For every type~$a$, there exists a directed subgraph~$\vec{D}_a$ of~$S$ such that if~$\psi_a$ is a proper coloring of~$S$ that restricts to a strong odd coloring on~$\vec{D}_a$, then the induced coloring~$\sigma_a$ of~$B(a)$ satisfies~(S'\ref{enum-clique-coloring_clique-sums:vertex_in_odd_number_of_cliques}).
    \end{claim}
    \begin{claimproof}
        Let~$\vec{D}_a$ be the directed graph on the vertices of~$S$ where every vertex~$x$ is connected with a directed edge to each vertex~$y \in V(S)$ with $y \neq x$ that represents a clique in~$B(a)$ which contains~$x$, i.e.\ $E(\vec{D}) = \set{\vec{xy} \given x,y \in V(S), Q_y \in B(a), x \in Q_y}$.

        We first prove that~$\vec{D}_a$ is a subgraph of~$S$.
        Let~$x \in V(S)$ and consider a clique~$Q_y \in B(a)$ that contains~$x$ with $y \neq x$.
        If~$x \in V(K_t)$, we clearly have $\vec{xy} \in E(S)$.
        Otherwise, $x = (v,d) \in V(H \boxtimes P)$ and we write $y = (u,\ell)$. 
        As~$x \in Q_y$, it follows that $\ell \in \set{d,d-1}$. 
        Let~$Q_y'$ be the clique of size~$2(k+1)+t$ which contains~$Q_y$.
        As $x \in Q_y'$, we obtain that $v = u$ or $(v,\ell)$ and~$(u,\ell)$ are connected in~$S$. 
        It follows that $xy \in E(S)$.
        Thus, $\vec{D}_a$ is a subgraph of~$S$.

        Suppose~$\psi_a$ is proper on~$S$ and strong odd on~$\vec{D}_a$.
        It remains to show that for every vertex~$x \in V(S)$, the induced coloring~$\sigma_a$ is strong odd on the set~$B(x,a) = \set{Q \in B(a) \given x \in Q}$ of cliques containing~$x$.
        As the coloring~$\psi_a$ is strong odd on~$N^+(\vec{D}_a,x)$, the induced coloring~$\sigma_a$ is strong odd on the set~$B(x,a) - Q_x$. 
        As~$\psi_a$ is proper, the clique~$Q_x$ has a different color from every other clique in~$B(x,a)$ and it follows that~$\sigma_a$ is strong odd on~$B(x,a)$.
    \end{claimproof}

    \begin{claim}
        For every type~$a$, there exists a set~$V_{B(a)} \subseteq V(S)$ such that if~$\psi_a$ is a coloring of~$S$ that is strong odd on~$V_{B(a)}$, then the induced coloring~$\sigma_a$ of~$B(a)$ satisfies (S'\ref{enum-clique-coloring_clique-sums:odd_sized_color_classes}).
    \end{claim}
    \begin{claimproof}
        Let~$V(B(a)) = \set{x \in V(H \boxtimes P) \given Q_x \in B(a)}$ be the set of vertices which represent cliques in~$B(a)$. 
        Recall that the color classes of~$\sigma_a$ correspond to the color classes of the vertices~$V_{B(a)}$, i.e., for every color~$c$ of~$\sigma_a$, we have $\abs{B(a) \cap \sigma_{a}^{-1}(c)} = \abs{V_{B(a)} \cap \psi_{a}^{-1}(c)}$. 
        As~$\psi_a$ is strong odd on~$V_{B(a)}$, each color class of~$\sigma_a$ has odd size.
    \end{claimproof}

    If \cref{prop:sum-fine} holds for~$w,k,t$ and~$m = 1$, then there exists for every type~$a$ a proper coloring~$\psi_a$ of the $(w,k,t)$-sum~$S$ with at most~$f_4(k,t,1,w)$ colors that restricts to a strong odd coloring on the directed graph~$\vec{D}_a$ and is strong odd on the set~$V_{B(a)}$.
    By the two claims above, the union~$\sigma$ of the colorings~$\sigma_a$ fulfills~(S'\ref{enum-clique-coloring_clique-sums:vertex_in_odd_number_of_cliques}) and~(S'\ref{enum-clique-coloring_clique-sums:odd_sized_color_classes}). 
    As there are at most~$2^{k+2+t}$ different types, $\sigma$ uses at most~$g_4(k,t,w) = 2^{k+2+t} \cdot f_4(k,t,1,w)$ colors.
\end{proof}

\begin{proof}[Proof of \cref{prop:sum-fine}]
We prove the claim by induction on the size~$w$ of the largest clique on which we attach a summand of~$S$ in its construction sequence. 

If~$w=0$, the $(w,k,t)$-sum~$S$ is the disjoint union of $(k,t)$-summands~$F_1, \dots, F_{\ell}$.
By \cref{lem:summand-fine}, there exists for every summand~$F_i$ a proper coloring~$\Psi_i$ with at most~$f_3(k,t,m)$ colors that restricts to a strong odd coloring on the directed graph~$F_i \cap \vec{G}$ and is strong odd on the sets~$M_j \cap V(F_i)$ for~$j \in [m]$.
Yet, the union of the colorings~$\Psi_i$ might not be strong odd on the sets~$M_j$.

We assign to every summand~$F_i$ a type~$A_i$ based on the coloring~$\Psi_i$.
The type~$A_i$ is a $\mathsf{0}$-$\mathsf{1}$-matrix with $m$~rows and~$f_3(k,t,m)$ columns defined as follows:
\begin{itemize}
    \item For~$c \in [f_3(k,t,m)]$, column~$c$ corresponds to color~$c$ in the coloring~$\Psi_i$ of~$F_i$.
    \item For~$j \in [m]$, row~$j$ corresponds to the set~$M_j \cap V(F_i)$. The matrix~$A_i$ has a~$1$ in row~$j$ and column~$c$ if and only if at least one vertex in~$M_j \cap V(F_i)$ has color~$c$ in~$\Psi_i$.
\end{itemize}
Note that there are only~$2^{m \cdot f_3(k,t,m)}$ different types.

For every type~$A$, consider the $0$-tree~$T_a$ whose vertices correspond to the summands~$F_i$ of type~$A$.
By \cref{prop:treewidth-fine}, there exists a coloring~$\sigma_A$ of~$T_a$ with at most~$f_1(0,0,m)$ colors that is strong odd on the sets~$M_j' = \set{F_i \given i \in [\ell], \text{$F_i$ is of type~$A$}, M_j \cap V(F_i) \neq \varnothing}$ for all~$j \in [m]$.

We now define the coloring~$\Psi$ of~$S$.
For a vertex~$v \in S$ that belongs to the summand~$F_i$ of type~$A$, we set~$\Psi(v) = (\Psi_i(v),A,\sigma_A(F_i))$.
Note that~$\Psi$ uses at most 
\[f_4(k,t,m,0) \coloneqq f_3(k,t,m) \cdot 2^{m \cdot f_3(k,t,m)}\cdot f_1(0,0,m)\]
colors.
The same arguments as in the proof of \cref{prop:treewidth-fine} show that~$\Psi$ fulfills~(S\ref{enum-sum:proper})-(S\ref{enum-sum:marks-odd}).

\smallskip

Now let~$w \geq 1$.
As in \cref{prop:treewidth-fine}, we first construct a vertex-coloring~$\Phi$ of~$S$ that is the product of four colorings~$\varphi_1, \varphi_2, \varphi_3,\varphi_4$ and satisfies~(S\ref{enum-sum:proper})-(S\ref{enum-sum:strong_odd}).
A final modification then yields the desired coloring~$\Psi$.

By \cref{lem:sum_natural_layering}, there exists a natural layering~$L_1, \dots, L_{\ell}$ of the graph~$S$.
The coloring~$\Phi$ independently colors each layer~$L_d$.
First suppose that~$d \geq 2$.
As the $(d-1)$-th layer induces a subgraph of a $(w-1,k,t)$-sum, there exists by induction a proper coloring~$\chi_{d-1}$ of~$L_{d-1}$ with at most~$f_4(k,t,0,w-1)$ colors.

\proofsubparagraph{Construction of~$\bm{\varphi_1}$.}
Let~$C$ be a component of~$L_d$.
The neighbors of $C$ in~$L_{d-1}$ form a clique~$Q$ by (N\ref{property_layering_sum:parent-clique}).
We call the vertices of~$C$ \emph{children} of~$Q$ and say that~$Q$ is their \emph{parent-clique}.
A clique~$Q$ in~$L_{d-1}$ may have children in several components of~$L_d$, but every vertex of~$L_d$ has exactly one parent-clique.
For every clique~$Q$ in~$L_{d-1}$, let~$S_Q$ be the subgraph of~$S$ induced by its children.
We define a coloring~$\varphi_Q$ of~$S_Q$.
For a vertex~$v \in L_d$, the color~$\varphi_1(v)$ corresponds to its color in~$\varphi_Q$, i.e., $\varphi_1(v) = \varphi_Q(v)$.

The coloring~$\varphi_Q$ is obtained as follows.
By (N\ref{property_layering_sum:parent-clique}), we may assume that the clique~$Q \subseteq L_{d-1}$ has size~$q \leq w$.
Let~$v_1, \dots, v_q$ denote the vertices of~$Q$.
Each vertex~$v_i$ has a different color~$c_i$ in~$\chi_{d-1}$.
For every~$i \in [q]$, let~$N_{c_i,Q} = N^+(\vec{G},v_i) \cap V(S_Q)$ denote the out-neighbors of~$v_i$ in~$\vec{G}$ that are in~$S_Q$.
For~$i \in [m]$ let~$M_{i,Q} = M_i \cap V(S_Q)$ denote the restrictions of the sets~$M_i$ to~$S_Q$ and let~$\vec{G}_Q$ be the restriction of~$\vec{G}$ to~$S_Q$.

The graph~$S[L_d]$ is a subgraph of a $(w-1,k,t)$-sum~$S'$ by (N\ref{property_layering_sum:layer_w-1}).
Calling induction on the graph~$S'$ with the directed graph~$\vec{G}_Q$ and the sets~$M_{i,Q}$ with $i \in [m]$ and~$N_{c_i,Q}$ with $i \in [q]$ yields the coloring~$\varphi_Q$ with at most $f_4(k,t,m+w,w-1)$~colors.

\proofsubparagraph{Construction of~$\bm{\varphi_2}$.}
We assign to every clique~$Q \in L_{d-1}$ of size at most~$w$ a \emph{type}. 
A type is a $\mathsf{0}$-$\mathsf{1}$-matrix~$A_Q$ with $m+f_4(k,t,0,w-1)$~rows and $f_4(k,t,m+w,w-1)$~columns defined as follows:
\begin{itemize}
    \item For $c \in [f_4(k,t,m+w,w-1)]$, column~$c$ corresponds to color~$c$ in the coloring~$\varphi_Q$ of~$S_Q$.
    \item For $j \in [m]$, row~$j$ corresponds to the set~$M_{j,Q} = M_j \cap V(S_Q)$. 
    The matrix~$A_Q$ has a~$\mathsf{1}$ in row~$j$ and column~$c$ if and only if at least one vertex in~$M_{j,Q}$ in~$S_Q$ has color~$c$ in~$\varphi_Q$.
    \item For $h \in [f_4(k,t,0,w-1)]$, $h$ corresponds to a color of~$\chi_{d-1}$.
    If there is no vertex~$v \in Q$ with~$\chi_{d-1}(v) = h$, row~$m+h$ contains only zeros.
    Otherwise, there is exactly one vertex~$v$ in~$Q$ with~$\chi_{d-1}(v) = h$.
    Here, $A_Q$ has a~$\mathsf{1}$ in row~$m+h$ and column~$c$ if and only if at least one neighbor~$u \in N_{h,Q}$ of~$v$ in~$\vec{G}_Q$ has color~$c$ in~$\varphi_Q$.
\end{itemize}
For a vertex~$v \in L_d$ with parent-clique~$Q$, we set~$\varphi_2(v) = A_Q$.
Note that~$\varphi_2$ admits at most~$(m+f_4(k,t,0,w-1)) \cdot f_4(k,t,m+w,w-1)$ different colors.

\proofsubparagraph{Construction of~$\bm{\varphi_3}$.}
For every type~$A$, let~$\sigma_A$ be the coloring of the cliques of~$L_{d-1}$ of type~$A$ with $g_4(k,t,w-1)$~colors we obtain by an application of \cref{lem:sum-clique-coloring}.
For a vertex~$v \in L_d$ with parent-clique~$Q$ of type~$A$, we set~$\varphi_3(v) = \sigma_{A}(Q)$.

\proofsubparagraph{Definition of~$\bm{\Phi}$.}
We now define the vertex-coloring~$\Phi$ for each layer~$L_d$.

Let~$d \geq 2$ and let~$v \in L_d$ be a vertex with parent-clique~$Q$ of type~$A$.
We define~$\Phi(v) = (\varphi_1(v),\varphi_2(v),\varphi_3(v),\varphi_4(v))$ where
\begin{itemize}
    \item $\varphi_1(v) = \varphi_Q(v)$ is the color of~$v$ under the partial coloring~$\varphi_Q$ of~$G[L_d]$,
    \item $\varphi_2(v) = A$ is the type of~$Q$,
    \item $\varphi_3(v) = \sigma_A(Q)$ is the color of the parent-clique~$Q$ in the coloring~$\sigma_A$ of all cliques of the $(d-1)$-th layer of type~$A$, and
    \item $\varphi_4(v) = d \bmod 3$ is the index of the layer~$L_d$ modulo~$3$.
\end{itemize}

Now let $d=1$. The vertices of~$L_1$ induce a $(0,k,t)$-sum.
Let~$\varphi_1$ be a proper coloring of~$L_1$ that is strong odd on the restriction~$\vec{G}[L_1]$ of~$\vec{G}$ and the restrictions~$M_i \cap L_1$ of the sets~$M_i$ to the first layer.
By induction, there exists such a coloring with at most~$f_4(k,t,m,0)$ colors.
For a vertex~$v \in L_1$, we set $\Phi(v) = (\varphi_1(v),-1,-1,1 \bmod 3)$.

As the coloring~$\varphi_1$ uses at most~$f_4(k,t,m+w,w-1)$ colors, the coloring~$\varphi_2$ at most~$(m+f_4(k,t,0,w-1)) \cdot f_4(k,t,m+w,w-1) + 1$ colors, $\varphi_3$ at most~$g_4(k,t,q-1)+1$ colors and~$\varphi_4$ at most three colors, the number of colors of~$\Phi$ is bounded in terms of~$m,k,t$ and~$w$.

The same argument as in the proof of~\cref{prop:treewidth-fine} shows that~$\Phi$ satisfies (S\ref{enum-sum:proper})-(S\ref{enum-sum:strong_odd}).
Using the same technique as in the proof of~\cref{prop:treewidth-fine}, we obtain a coloring~$\Psi$ of~$S$ that uses at most twice as many colors as~$\Phi$ and satisfies (S\ref{enum-sum:proper})-(S\ref{enum-sum:marks-odd}). 
\end{proof}

\section{Application to Facially Odd Colorings}\label{sec:app}

In~\cite{CJ09} the authors introduce \emph{proper facially odd colorings} as proper vertex-colorings of a $2$-connected plane graph~$G$ such that every face contains every color an odd number of times or not at all. 
We denote by $\chi_{\mathrm{pfo}}(G)$ the corresponding parameter. 
In~\cite[Conjecture 7.1]{CJ09} the authors wonder whether $\chi_{\mathrm{pfo}}(G)$ is bounded by a global constant on the class of $2$-connected plane graphs. 
This was shown by~\cite{CJV11} and the current best bound of $97$ is due to~\cite{KRSS14}. 
Special classes of plane graphs have been studied in~\cite{CJK11,WFW12}. 
Clearly, the notion of proper facially odd colorings as well as the parameter $\chi_{\mathrm{pfo}}(G)$ extend immediately to $G$ with a closed $2$-cell-embedding on a closed surface $S$, i.e.\ every face is bounced by a cycle. We can prove a strengthening of~\cite[Conjecture 7.1]{CJ09}:
\begin{corollary}\label{parity}
For any surface $S$ there exists a constant $c$ such that every closed $2$-cell-embedded graph $G$ has $\chi_{\mathrm{pfo}}(G)\leq c$.
\end{corollary}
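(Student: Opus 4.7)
The plan is to reduce \cref{parity} to our main \cref{thm:minor-closed} by encoding the face-boundary condition as a strong odd condition in an auxiliary graph.

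Let $G$ be a closed $2$-cell-embedded graph on a surface $S$, and let $F_1,\dots,F_r$ be its faces. I would construct an auxiliary graph $G'$ from $G$ by adding, for every face $F_i$, a new vertex $v_{F_i}$ adjacent to exactly the vertices on the boundary cycle of $F_i$. Placing $v_{F_i}$ in the interior of $F_i$ and routing its edges within $F_i$ produces an embedding of $G'$ on the same surface $S$. In particular, $G'$ lies in the class $\mathcal{G}_S$ of graphs embeddable on $S$.

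Since graphs embeddable on a fixed surface form a proper minor-closed class (it excludes $K_n$ for $n$ large enough), \cref{thm:minor-closed} yields a constant $c_S$ with $\chiso(H) \leq c_S$ for every $H \in \mathcal{G}_S$. Let $\psi \colon V(G') \to [c_S]$ be a strong odd coloring of $G'$, and let $\psi|_{V(G)}$ be its restriction to the original vertex set.

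It remains to verify that $\psi|_{V(G)}$ is a proper facially odd coloring of $G$. Properness is inherited from $\psi$ being proper on $G' \supseteq G$. For the facial parity condition, fix a face $F_i$. Because $G$ is closed $2$-cell-embedded, the boundary of $F_i$ is a cycle, so the neighborhood $N_{G'}(v_{F_i})$ is precisely the set of vertices on that boundary cycle (each counted once). The strong odd condition at $v_{F_i}$ says that for every color $c$, the cardinality $|N_{G'}(v_{F_i}) \cap \psi^{-1}(c)|$ is either $0$ or odd; equivalently, color $c$ appears on the boundary of $F_i$ either not at all or an odd number of times. Hence $\psi|_{V(G)}$ is a proper facially odd coloring with at most $c_S$ colors, proving $\chi_{\mathrm{pfo}}(G) \leq c_S$.

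I do not expect a substantive obstacle here: the one subtlety is ensuring that each boundary cycle is a simple cycle so that neighborhood-parity in $G'$ really corresponds to face-parity in $G$, which is exactly what the closed $2$-cell-embedding assumption guarantees. Everything else is just invoking \cref{thm:minor-closed} on the proper minor-closed class $\mathcal{G}_S$.
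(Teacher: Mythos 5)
Your proposal is correct and follows essentially the same route as the paper: insert a vertex $v_f$ into each face adjacent to its boundary vertices, observe that the augmented graph still embeds in $S$ and hence lies in a proper minor-closed class, apply \cref{thm:minor-closed}, and restrict the resulting strong odd coloring to $V(G)$. Your extra remark that the closed $2$-cell-embedding guarantees each face boundary is a cycle (so that neighborhood parity at $v_f$ matches face parity) is precisely the point the paper relies on implicitly.
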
\label{cor:faces}
\begin{proof}
Take a closed $2$-cell-embedded graph $G$ in $S$ and add one vertex $v_f$ into each face $f$ and connect $v_f$ to the vertices of $f$. The resulting simple graph $G'$ clearly also embeds into $S$. Since the class of graphs that embed into $S$ is proper minor-closed \cref{thm:minor-closed} yields $\chi_{so}(G')\leq c$. The restriction of this coloring to $G$ is a proper vertex-coloring such that every face contains every color an odd number of times or not at all. Hence, $\chi_{\mathrm{pfo}}(G)\leq c$.
\end{proof}

Interestingly, the fact that $\chi_{\mathrm{pfo}}$ is bounded for planar graphs~\cite{CJV11} is a crucial ingredient for the upper bound on the strong odd chromatic number of planar graphs in~\cite{caro2024strong}.

\section{Going further}\label{sec:further}

\paragraph*{Beyond proper}
In this section, we drop the condition of the coloring being a proper coloring and consider \emph{improper strong odd colorings} of a graph $G$ and the \emph{improper strong odd chromatic number} $\chi_{\mathrm{iso}}(G)$.
We show:

\begin{proposition}
    There are $n$-vertex graphs that need $\Omega (n)$ colors in any improper strong odd coloring.
\end{proposition}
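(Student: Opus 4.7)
The plan is to show that $\chi_{\mathrm{iso}}(K_n) = n$ for every odd $n \geq 3$. Since the odd integers form an infinite set, the family $\{K_n : n \text{ odd}, n \geq 3\}$ then yields $n$-vertex graphs requiring $\Omega(n)$ colors in any improper strong odd coloring, proving the proposition.

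First I would fix an arbitrary improper strong odd coloring $\psi \colon V(K_n) \to [k]$ and derive a dichotomy on the size of each non-empty color class $C = \psi^{-1}(c)$. Every $v \in C$ sees $|N(v) \cap C| = |C| - 1$ vertices of color $c$ in its neighborhood, so the strong odd condition forces $|C| - 1$ to be zero or odd, i.e., $|C| \in \{1, 2, 4, 6, \ldots\}$. Simultaneously, any $v \notin C$ (should one exist) sees $|N(v) \cap C| = |C|$ vertices of color $c$, forcing $|C| \in \{0, 1, 3, 5, \ldots\}$. Intersecting these two sets yields $|C| = 1$ whenever $C \neq V(K_n)$.

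Next I would rule out the only remaining possibility, namely that some color class $C$ equals $V(K_n)$. This would require $n = |C|$ to lie in $\{1, 2, 4, 6, \ldots\}$, which is incompatible with $n$ being odd and at least $3$. Hence every non-empty color class is a proper subset of $V(K_n)$ and by the dichotomy above has size exactly one, giving $k \geq n$. The matching upper bound is trivial, since assigning each vertex its own colour is a valid improper strong odd coloring.

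The argument presents no real obstacle; the only subtlety is that $K_n$ for even $n$ does admit the monochromatic improper strong odd coloring (then $|C|-1 = n-1$ is odd), which is why the construction is restricted to odd $n$. Since infinitely many such $n$ already suffice to witness the $\Omega(n)$ growth rate, this restriction is harmless.
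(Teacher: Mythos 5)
Your proposal is correct and follows essentially the same route as the paper: take $K_n$ with $n$ odd and use the parity dichotomy that a vertex inside a color class sees $|C|-1$ same-colored neighbors while a vertex outside sees $|C|$, forcing all classes to be singletons. Your write-up merely spells out the case $C = V(K_n)$ and the even-$n$ caveat a bit more explicitly than the paper does.
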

\begin{proof}
    Let $G=K_n$ where $n$ is odd. Our objective is to prove that $\chi_{\mathrm{iso}}(G)=n$.

    Assume, for contradiction, the existence of an improper strong odd coloring where a color class $C$ contains at least 2 vertices.

    If $|C|$ is odd, then each vertex in $C$ sees an even number of vertices of its own color among its neighbors, which means it is not a strong odd coloring. Thus, $|C|$ must be even. However, since $n$ is odd, there exists a vertex $v \in V(G)\setminus C$. Hence, $v$ sees  each vertex in the class $C$, implying that $v$ sees a color an even number of times among its neighbors. 
\end{proof}

\begin{proposition}
    There is no function $f$ such that $\chiso(G)\leq f(\chi_{\mathrm{iso}}(G),\chi(G))$.
\end{proposition}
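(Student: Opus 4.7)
The plan is to modify the graphs $G_k$ of \cref{obs:nobinding} by attaching a single universal vertex. First I would note why $G_k$ on its own does not suffice: the subtree-distinctness argument in the proof of \cref{obs:nobinding} only uses the parity of leaf-color counts at each internal vertex and never invokes properness, so the same argument also gives $\chi_{\mathrm{iso}}(G_k)\geq 2^k$. The construction therefore has to force every vertex to have odd degree, so that the constant coloring becomes a valid improper strong odd coloring.

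For every even integer $k\geq 2$ I would set $H_k := G_k+u$, the graph obtained from $G_k$ by adding one new vertex $u$ adjacent to all of $V(G_k)$. A direct count gives $\deg(u)=2^{k+1}-1$, each internal vertex of $G_k$ at depth $d\leq k-1$ has degree $2^{k-d}+1$, and each leaf has degree $k+1$; all three quantities are odd since $k$ is even. Hence the constant coloring is a valid improper strong odd coloring, so $\chi_{\mathrm{iso}}(H_k)=1$. And since $G_k$ is bipartite and $u$ is universal, $\chi(H_k)=3$.

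The main step is to show $\chiso(H_k)\geq 2^k+1$. In any proper strong odd coloring $\psi$ of $H_k$, the color $X:=\psi(u)$ is used only at $u$, because $u$ is adjacent to every other vertex. For each internal vertex $u_0$ of $G_k$ at depth $d$, the $H_k$-neighborhood of $u_0$ is its $2^{k-d}$ leaf-descendants together with $u$: the color $X$ occurs there exactly once and trivially satisfies the odd-count condition, while for every other color $c$ the number of color-$c$ leaves in the subtree rooted at $u_0$ must be odd or zero. This is exactly the constraint powering \cref{obs:nobinding}, so the same inductive doubling forces the $2^k$ leaves of $H_k$ to receive pairwise distinct colors, and adding $X$ gives $\chiso(H_k)\geq 2^k+1$.

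The main (mild) obstacle is checking that the new vertex $u$ does not break this induction; it does not, because $u$ contributes exactly one neighbor of the unique color $X$ to every internal vertex's neighborhood, which is always compatible with the parity constraint on the remaining colors. Letting $k$ range over even integers, the family $\{H_k\}$ satisfies $\chi(H_k)\leq 3$ and $\chi_{\mathrm{iso}}(H_k)=1$ while $\chiso(H_k)\to\infty$, ruling out any bounding function $f$ with $\chiso(G)\leq f(\chi_{\mathrm{iso}}(G),\chi(G))$.
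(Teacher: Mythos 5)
Your proof is correct, but it takes a genuinely different route from the paper's. The paper builds a bespoke graph on four parts $V_1,\dots,V_4$ (each pair of $V_1$-vertices joined through a $V_2$-vertex, true twins in $V_3$, pendants in $V_4$): all degrees are odd, $\chi=3$, and a short local argument (if two vertices of $V_1$ shared a color, their common neighbors in $V_2\cup V_3$ would see that color exactly twice) forces $\chiso\geq n$. You instead recycle the graphs $G_k$ of \cref{obs:nobinding}, add a universal vertex $u$, and take $k$ even so that every degree becomes odd; the lower bound $\chiso(H_k)\geq 2^k+1$ is then inherited from the inductive parity argument of \cref{obs:nobinding}, which is legitimate because $u$ receives a unique color in any proper coloring and properness keeps that color off the leaves, so each internal vertex still imposes exactly the leaf-parity constraint used there. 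Your degree bookkeeping ($2^{k+1}-1$, $2^{k-d}+1$, $k+1$) and the observation that $\chi_{\mathrm{iso}}(G_k)\geq 2^k$ (hence the need for the universal vertex) are both right. The two routes share the same overall strategy: all-odd degrees give $\chi_{\mathrm{iso}}=1$, the chromatic number stays at $3$, and a parity mechanism forces many colors. The paper's example is self-contained and needs no parity restriction on a parameter; yours leans on \cref{obs:nobinding} but is quantitatively stronger, giving $\chiso=\Omega(|V|)$ on $2^{k+1}$ vertices rather than the $\Omega(\sqrt{|V|})$ achieved by the paper's $\Theta(n^2)$-vertex construction.
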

\begin{proof}
    Consider a graph $G$ with vertex set $V(G) = V_1 \cupdot V_2 \cupdot V_3 \cupdot V_4$ where $V_1$ and $V_4$ have $n$ vertices each and $V_2$ and $V_3$ have $\binom{n}{2}$ vertices each.
    Vertices in $V_2$ are connected to the different pairs of vertices in $V_1$. 
    Vertices in $V_3$ are true twins (i.e., have the same closed neighborhood) of each vertex in $V_2$. 
    Finally, vertices in $V_4$ are pendants of each vertex in $V_1$.
    
    Graph $G$ has chromatic number $3$ (e.g., with color classes $V_1$, $V_2$, and $V_3\cup V_4$). 
    The improper odd chromatic number~$\chi_{\mathrm{iso}}(G)$ is $1$, since all vertices of~$G$ have odd degree. 
    However, the strong odd chromatic number~$\chiso(G)$ is at least~$n$ since no color can be repeated in $V_1$. Assume for a contradiction that a color $c$ was repeated in vertices $u,v \in V_1$. 
    Neither of the shared neighbors of $u$ and $v$ in $V_2$ and $V_3$ can have color $c$. But then they see color $c$ exactly twice.
\end{proof}

\paragraph*{Beyond odd}

Let $S=\{s\in \mathbb{N}\mid s\bmod 2=1\}\cup\{0\}$ be the set of odd numbers and $0$, then a $t$-coloring $\varphi$ of~$G$ is strong odd if $\abs{\varphi(i)^{-1}\cap N(v)}\in S$ for all $v\in V(G)$ and $i\in[t]$. We have the feeling that our proofs could be extended to the more general setting where for some $n\in\mathbb{N}$ and some generating set $C$ of $\mathbb{Z}_n$ we define $S=\{s\in \mathbb{N}\mid s\bmod n\in C\}\cup\{0\}$. This also includes the setting $C=\{1\}$ with general $n$  considered in~\cite{Bom13}. Indeed, a central property of $S$ used in our proofs is that it is a \emph{basis of bounded order}, i.e., there exists $c\in\mathbb{N}$ such that for every~$n\in\mathbb{N}$ there are $(s_i)_{1\leq i\leq n}$ in $S$ such that $\sum_{i=1}^cs_i=n$.

Indeed, for example one can see along the proof for bounding the strong odd chromatic number of trees~\cite[Proposition 2.1]{caro2024strong} that if $S$ is a basis of bounded order with constant $c$, then any tree $G$ can be $(c+2)$-colored such that $|\varphi(i)^{-1}\cap N(v)|\in S$ for all $v\in V(G)$ and $i\in[c+2]$.

A closer inspection however yields that our proofs furthermore require, that  $S$ is \emph{multiplication-closed}, i.e., if $s,t\in S$ then $st\in S$.
An interesting example of a multiplication closed basis comes from the Hilbert–Waring theorem~\cite{Hil09}: for any fixed $n\in\mathbb{N}$ there exists a $c\in\mathbb{N}$ such that all $n$th powers of natural numbers form a multiplication-closed basis of bounded order $c$. 

\begin{question}
    Does there exist a constant $c$, such that every planar graph can be properly vertex-$c$-colored such that every color in a neighborhood appears a square number of times? 
\end{question}

Note that similar extensions of the ordinary odd chromatic number have been considered in~\cite{Liu24}.

\paragraph*{Beyond minor-closed via hypergraphs}

Given a hypergraph $H=(V,E)$, a vertex coloring is strong odd if every hyperedge contains every color either an odd number of times or not at all. The parity colorings considered in~\cite{CJ09} are an instance of this. Further, note that this is the strong variant of the odd notion considered in~\cite{CKP13}. We wonder:

\begin{question}\label{quest:hyper}
    Do hypergraphs of bounded degree have bounded strong odd chromatic number?
\end{question}

Note that by hypergraph duality~\cref{quest:hyper} can also be formulated as a statement about \emph{strong odd edge colorings} of hypergraphs of bounded uniformity, where now edges must be colored such that every vertex sees every edge-color an odd number of times or never. The corresponding question for $2$-uniform hypergraphs, i.e., graphs, has been answered in the positive~\cite{LPS15}, namely $6$ colors always suffice.  However, already it is open whether there exists a constant $c$ such that every $3$-uniform hypergraph has a strong odd edge coloring with $c$ colors.

An answer to~\cref{quest:hyper} would be interesting because of the following. Here the \emph{star-chromatic number} $\chi^*(G)$ is the smallest number of colors for a proper vertex coloring such that the union of any two color-classes induces a forest of stars.

\begin{proposition}
    If for any maximum degree $\Delta$ there exists a constant $c$, so that $\chi_{so}(H)\leq c$ for every hypergraph $H$ of maximum degree $\Delta$, then the strong odd chromatic number of any graph $G$ is bounded in terms of its star-chromatic number $\chi^*(G)$.
\end{proposition}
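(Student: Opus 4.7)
My plan is to use an optimal star coloring of $G$ as a scaffold and, within each color class, reduce to a strong odd coloring of a hypergraph of bounded degree. I fix a star coloring $V(G)=V_1\cup\cdots\cup V_k$ with $k=\chi^*(G)$. By definition, each $V_i$ is independent and, for every $i\neq j$, the induced subgraph $G[V_i\cup V_j]$ is a star forest. In particular, for any $v\in V_j$ with a neighbor in $V_i$, the vertex $v$ lies in a unique star $S$ of $G[V_i\cup V_j]$, and either $v$ is a leaf of $S$ (whence $N(v)\cap V_i$ is the single vertex at the center of~$S$) or $v$ is the center of $S$ (whence $N(v)\cap V_i$ equals the leaf-set $L\subseteq V_i$ of~$S$).

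Next, for each $i\in[k]$ I build a hypergraph $H_i$ on vertex set $V_i$ whose hyperedges are all such leaf-sets $L\subseteq V_i$ of stars in $G[V_i\cup V_j]$, $j\neq i$, whose center lies in $V_j$. The key observation is that a vertex $u\in V_i$ lies in the hyperedge contributed by the pair $(i,j)$ if and only if $u$ is a leaf (rather than the center) of its unique star in $G[V_i\cup V_j]$; so $u$ is in at most one hyperedge per $j\neq i$, giving $\Delta(H_i)\leq k-1$. By the standing hypothesis, there is a strong odd coloring of $H_i$ with at most $c(k-1)$ colors.

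Finally, I color $V(G)$ by coloring each class $V_i$ via such a coloring of $H_i$ on its own private palette of $c(k-1)$ colors, disjoint from the palettes used for the other classes. The resulting coloring $\varphi$ uses at most $k\cdot c(k-1)$ colors, a function of $\chi^*(G)$ only. It is proper on $G$ because each $V_i$ is independent and the palettes are pairwise disjoint. For the strong odd condition, fix $v\in V(G)$ and a color $c$; then $\varphi^{-1}(c)\subseteq V_i$ for the unique class $V_i$ owning $c$. If $v\in V_i$, then $N(v)\cap\varphi^{-1}(c)\subseteq N(v)\cap V_i=\emptyset$; otherwise the dichotomy above forces $N(v)\cap V_i$ to be either a single vertex or a hyperedge of $H_i$, and hence $|N(v)\cap\varphi^{-1}(c)|$ is $0$, $1$, or odd. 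The main obstacle---and the only real insight---is the disjoint-palette design: it prevents odd contributions from distinct classes $V_i$ from combining into an even total and lets us treat each $H_i$ independently.
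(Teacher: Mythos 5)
Your proposal is correct and follows essentially the same route as the paper: an optimal star coloring, a hypergraph whose hyperedges are the leaf-sets of the two-colored stars (hence maximum degree at most $\chi^*(G)-1$), and the hypothesis applied to that hypergraph, with the case analysis leaf-versus-center of the unique star giving the strong odd condition. Your use of $k$ separate hypergraphs $H_i$ with disjoint palettes is just a repackaging of the paper's single hypergraph together with the product coloring $f(v)=(\varphi(v),\psi(v))$, and yields the same bound of roughly $k\cdot c$ colors.
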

\begin{proof}
    Let $G=(V,E)$ be a graph and $\varphi$ a star-coloring with $k=\chi^*(G)$ colors. Define $H$ on $V$ such that $X\subseteq V$ is an edge if $X$ are the extremities of a $2$-colored star with respect to $\varphi$. In case a star has at most $2$ vertices, choose any of its vertices as its extremity. The maximum degree $\Delta$ of $H$ is less than $k$, so let $\psi$ be a strong odd coloring of $H$ with $c_{k}$ colors. As a strong odd coloring of $G$ we propose $f(v)=(\varphi(v),\psi(v))$. The first entry assures the coloring to be proper. Let $w\in N(V)$. If the star in $\varphi^{-1}(v)\cup \varphi^{-1}(w)$ that contains $v,w$ has $w$ as its center, then $w$ is unique in $N(v)$ with respect to $\varphi$, hence $f(w)$ also appears exactly once in $N(v)$. If otherwise that star has $v$ as its center, then all the neighbors of $v$ that have the same color with respect to $\varphi$ as $w$ form an edge of $H$. Hence, since $\psi$ is a strong odd coloring of $H$, they are form parts of odd size each with resect to their second coordinate. Thus, $f$ is a strong odd proper coloring with $c_{k}k$ colors.
\end{proof}

Hence, since by~\cite{Nešetřil2003} every proper minor-closed classes has bounded star-chromatic number, a positive answer to~\cref{quest:hyper} would imply~\cref{thm:minor-closed}. Even stronger, as every class of bounded expansion has bounded star-chromatic number (see e.g., \cite[p.\,188]{jiang2023chi}), a positive answer to~\cref{quest:hyper} would give a positive answer to:

\begin{question}
    \label{quest:bounded_expansion}
    Is the strong odd chromatic number bounded on graph classes of bounded expansion?
\end{question}

Note that in~\cite{Hic23,Liu24} it has been shown that the proper conflict free chromatic number and hence in particular the odd chromatic number is bounded on graph classes of bounded expansion.

\subsection*{Acknowledgments}
Part of this work was conducted at the 12th Annual Workshop on Geometry and Graphs held at the Bellairs Research Institute in February 2025. We are grateful to the organizers and participants for providing an excellent research environment.

\bibliography{refs}

\begin{thebibliography}{10}

\bibitem{zbMATH07622664}
Yair Caro, Mirko Petru{\v{s}}evski, and Riste {\v{S}}krekovski.
\newblock Remarks on proper conflict-free colorings of graphs.
\newblock {\em Discrete Mathematics}, 346(2):14, 2023.
\newblock \href {https://doi.org/10.1016/j.disc.2022.113221}
  {\path{doi:10.1016/j.disc.2022.113221}}.

\bibitem{caro2024strong}
Yair Caro, Mirko Petru{\v{s}}evski, Riste {\v{S}}krekovski, and Zsolt Tuza.
\newblock On strong odd colorings of graphs.
\newblock arXiv preprint, 2024.
\newblock \href {https://doi.org/10.48550/arXiv.2410.02336}
  {\path{doi:10.48550/arXiv.2410.02336}}.

\bibitem{CKP13}
Panagiotis Cheilaris, Bal{\'a}zs Keszegh, and D{\"o}m{\"o}t{\"o}r
  P{\'a}lv{\"o}lgyi.
\newblock Unique-maximum and conflict-free coloring for hypergraphs and tree
  graphs.
\newblock {\em SIAM Journal on Discrete Mathematics}, 27(4):1775--1787, 2013.
\newblock \href {https://doi.org/10.1137/120880471}
  {\path{doi:10.1137/120880471}}.

\bibitem{zbMATH07940890}
Eun-Kyung Cho, Ilkyoo Choi, Hyemin Kwon, and Boram Park.
\newblock Brooks-type theorems for relaxations of square colorings.
\newblock {\em Discrete Mathematics}, 348(1):12, 2025.
\newblock \href {https://doi.org/10.1016/j.disc.2024.114233}
  {\path{doi:10.1016/j.disc.2024.114233}}.

\bibitem{zbMATH07962118}
Eun-Kyung Cho, Ilkyoo Choi, Hyemin Kwon, and Boram Park.
\newblock Proper conflict-free coloring of sparse graphs.
\newblock {\em Discrete Applied Mathematics}, 362:34--42, 2025.
\newblock \href {https://doi.org/10.1016/j.dam.2024.11.016}
  {\path{doi:10.1016/j.dam.2024.11.016}}.

\bibitem{Cra23}
Daniel~W. Cranston.
\newblock Coloring, list coloring, and painting squares of graphs (and other
  related problems).
\newblock {\em Electronic Journal of Combinatorics}, 30(2), 2023.
\newblock \href {https://doi.org/10.37236/10898} {\path{doi:10.37236/10898}}.

\bibitem{zbMATH07959405}
Daniel~W. Cranston and Chun-Hung Liu.
\newblock Proper conflict-free coloring of graphs with large maximum degree.
\newblock {\em SIAM Journal on Discrete Mathematics}, 38(4):3004--3027, 2024.
\newblock \href {https://doi.org/10.1137/23M1563281}
  {\path{doi:10.1137/23M1563281}}.

\bibitem{CJ09}
J{\'u}lius Czap and Stanislav Jendrol'.
\newblock Colouring vertices of plane graphs under restrictions given by faces.
\newblock {\em Discussiones Mathematicae. Graph Theory}, 29(3):521--543, 2009.
\newblock \href {https://doi.org/10.7151/dmgt.1462}
  {\path{doi:10.7151/dmgt.1462}}.

\bibitem{CJK11}
J{\'u}lius Czap, Stanislav Jendrol', and Franti{\v{s}}ek Kardo{\v{s}}.
\newblock On the strong parity chromatic number.
\newblock {\em Discussiones Mathematicae. Graph Theory}, 31(3):587--600, 2011.
\newblock \href {https://doi.org/10.7151/dmgt.1567}
  {\path{doi:10.7151/dmgt.1567}}.

\bibitem{CJV11}
J{\'u}lius Czap, Stanislav Jendrol', and Margit Voigt.
\newblock Parity vertex colouring of plane graphs.
\newblock {\em Discrete Mathematics}, 311(6):512--520, 2011.
\newblock \href {https://doi.org/10.1016/j.disc.2010.12.008}
  {\path{doi:10.1016/j.disc.2010.12.008}}.

\bibitem{zbMATH07975074}
Tianjiao Dai, Qiancheng Ouyang, and Fran{\c{c}}ois Pirot.
\newblock New bounds for odd colourings of graphs.
\newblock {\em Electronic Journal of Combinatorics}, 31(4), 2024.
\newblock \href {https://doi.org/10.37236/12110} {\path{doi:10.37236/12110}}.

\bibitem{DJMMUW20}
Vida Dujmovi{\'c}, Gwena{\"e}l Joret, Piotr Micek, Pat Morin, Torsten Ueckerdt,
  and David~R. Wood.
\newblock Planar graphs have bounded queue-number.
\newblock {\em Journal of the ACM}, 67(4):38, 2020.
\newblock \href {https://doi.org/10.1145/3385731} {\path{doi:10.1145/3385731}}.

\bibitem{DujmovicMW05}
Vida Dujmovi{\'c}, Pat Morin, and David~R. Wood.
\newblock Layout of graphs with bounded tree-width.
\newblock {\em {SIAM} Journal on Computing}, 34(3):553--579, 2005.
\newblock \href {https://doi.org/10.1137/S0097539702416141}
  {\path{doi:10.1137/S0097539702416141}}.

\bibitem{DHJLW21}
Zden{\v{e}}k Dvo{\v{r}}{\'a}k, Tony Huynh, Gwenael Joret, Chun-Hung Liu, and
  David~R. Wood.
\newblock Notes on graph product structure theory.
\newblock In {\em 2019--20 MATRIX annals}, pages 513--533. Cham: Springer,
  2021.
\newblock \href {https://doi.org/10.1007/978-3-030-62497-2_32}
  {\path{doi:10.1007/978-3-030-62497-2_32}}.

\bibitem{zbMATH07615629}
Igor Fabrici, Borut Lu{\v{z}}ar, Simona Rindo{\v{s}}ov{\'a}, and Roman
  Sot{\'a}k.
\newblock Proper conflict-free and unique-maximum colorings of planar graphs
  with respect to neighborhoods.
\newblock {\em Discrete Applied Mathematics}, 324:80--92, 2023.
\newblock \href {https://doi.org/10.1016/j.dam.2022.09.011}
  {\path{doi:10.1016/j.dam.2022.09.011}}.

\bibitem{Hic23}
Robert Hickingbotham.
\newblock Odd colourings, conflict-free colourings and strong colouring
  numbers.
\newblock {\em Australasian Journal of Combinatorics}, 87:160--164, 2023.
\newblock URL: \url{ajc.maths.uq.edu.au/pdf/87/ajc_v87_p160.pdf}.

\bibitem{Hil09}
David Hilbert.
\newblock Beweis f{\"u}r die {Darstellbarkeit} der ganzen {Zahlen} durch eine
  feste {Anzahl} {{\(n\)}}-ter {Potenzen} ({Waringsches} {Problem}).
\newblock {\em Mathematische Annalen}, 67:281--300, 1909.
\newblock \href {https://doi.org/10.1007/BF01450405}
  {\path{doi:10.1007/BF01450405}}.

\bibitem{jiang2023chi}
Yiting Jiang, Jaroslav Ne{\v{s}}et{\v{r}}il, and Patrice~Ossona de~Mendez.
\newblock From $\chi$-to $\chi_p$-bounded classes.
\newblock {\em Journal of Combinatorial Theory}, 158:186--209, 2023.
\newblock \href {https://doi.org/10.1016/j.jctb.2021.05.006}
  {\path{doi:10.1016/j.jctb.2021.05.006}}.

\bibitem{jklmpqswyz23}
Andrea Jim{\'e}nez, Kolja Knauer, Carla~Negri Lintzmayer, Mart{\'{\i}}n
  Matamala, Juan~Pablo Pe{\~n}a, Daniel~A. Quiroz, Maycon Sambinelli, Yoshiko
  Wakabayashi, Weiqiang Yu, and Jos{\'e} Zamora.
\newblock Boundedness for proper conflict-free and odd colorings.
\newblock arXiv preprint, 2023.
\newblock \href {https://doi.org/10.48550/arXiv.2308.00170}
  {\path{doi:10.48550/arXiv.2308.00170}}.

\bibitem{KRSS14}
Tom{\'a}{\v{s}} Kaiser, Ond{\v{r}}ej Ruck{\'y}, Mat{\v{e}}j Stehl{\'{\i}}k, and
  Riste {\v{S}}krekovski.
\newblock Strong parity vertex coloring of plane graphs.
\newblock {\em Discrete Mathematics and Theoretical Computer Science. DMTCS},
  16(1):143--158, 2014.
\newblock \href {https://doi.org/10.46298/dmtcs.640}
  {\path{doi:10.46298/dmtcs.640}}.

\bibitem{zbMATH07756975}
Mateusz Kamyczura and Jakub Przyby{\l}o.
\newblock On conflict-free proper colourings of graphs without small degree
  vertices.
\newblock {\em Discrete Mathematics}, 347(1):6, 2024.
\newblock \href {https://doi.org/10.1016/j.disc.2023.113712}
  {\path{doi:10.1016/j.disc.2023.113712}}.

\bibitem{knauer2012simple}
Kolja Knauer and Torsten Ueckerdt.
\newblock Simple treewidth.
\newblock In {\em Midsummer Combinatorial Workshop Prague}, volume~17, 2012.

\bibitem{kwon2024strong}
Hyemin Kwon and Boram Park.
\newblock Strong odd coloring of sparse graphs.
\newblock arXiv preprint, 2024.
\newblock \href {https://doi.org/10.48550/arXiv.2401.11653}
  {\path{doi:10.48550/arXiv.2401.11653}}.

\bibitem{Liu24}
Chun-Hung Liu.
\newblock Proper conflict-free list-coloring, odd minors, subdivisions, and
  layered treewidth.
\newblock {\em Discrete Math.}, 347(1):16, 2024.
\newblock Id/No 113668.
\newblock \href {https://doi.org/10.1016/j.disc.2023.113668}
  {\path{doi:10.1016/j.disc.2023.113668}}.

\bibitem{LPS15}
Borut Lu{\v{z}}ar, Mirko Petru{\v{s}}evski, and Riste {\v{S}}krekovski.
\newblock Odd edge coloring of graphs.
\newblock {\em Ars Mathematica Contemporanea}, 9(2):267--277, 2015.
\newblock \href {https://doi.org/10.26493/1855-3974.576.895}
  {\path{doi:10.26493/1855-3974.576.895}}.

\bibitem{Nešetřil2003}
Jaroslav Ne{\v{s}}et{\v{r}}il and Patrice Ossona~de Mendez.
\newblock {\em Colorings and Homomorphisms of Minor Closed Classes}, pages
  651--664.
\newblock Springer Berlin Heidelberg, 2003.
\newblock \href {https://doi.org/10.1007/978-3-642-55566-4_29}
  {\path{doi:10.1007/978-3-642-55566-4_29}}.

\bibitem{PS22}
Mirko Petru{\v{s}}evski and Riste {\v{S}}krekovski.
\newblock Colorings with neighborhood parity condition.
\newblock {\em Discrete Applied Mathematics}, 321:385--391, 2022.
\newblock \href {https://doi.org/10.1016/j.dam.2022.07.018}
  {\path{doi:10.1016/j.dam.2022.07.018}}.

\bibitem{Bom13}
Christopher~M. van Bommel.
\newblock Vertex colouring problems with restrictions given by faces.
\newblock {honours} thesis, St. Francis Xavier University, 2013.
\newblock URL:
  \url{https://www.uwaterloo.ca/scholar/sites/ca.scholar/files/cvanbomm/files/honours_thesis.pdf}.

\bibitem{zbMATH07785865}
Tao Wang and Xiaojing Yang.
\newblock On odd colorings of sparse graphs.
\newblock {\em Discrete Applied Mathematics}, 345:156--169, 2024.
\newblock \href {https://doi.org/10.1016/j.dam.2023.11.039}
  {\path{doi:10.1016/j.dam.2023.11.039}}.

\bibitem{WFW12}
Weifan Wang, Stephen Finbow, and Ping Wang.
\newblock An improved bound on parity vertex colourings of outerplane graphs.
\newblock {\em Discrete Mathematics}, 312(18):2782--2787, 2012.
\newblock \href {https://doi.org/10.1016/j.disc.2012.04.009}
  {\path{doi:10.1016/j.disc.2012.04.009}}.

\bibitem{Weg77}
Gerd Wegner.
\newblock Graphs with given diameter and a coloring problem.
\newblock Technical report, University of Dortmund, 1977.
\newblock \href {https://doi.org/10.17877/DE290R-16496}
  {\path{doi:10.17877/DE290R-16496}}.

\end{thebibliography}

\end{document}